\renewcommand\theequation{\thesection.\arabic{equation}}
\newcommand{\BA}{{\mathbb {A}}}
\newcommand{\BC}{{\mathbb {C}}}
\newcommand{\BZ}{{\mathbb {Z}}}
\newcommand{\CA}{{\mathcal {A}}}
\newcommand{\CE}{{\mathcal {E}}}
\newcommand{\CJ}{{\mathcal {J}}}
\newcommand{\CO}{{\mathcal {O}}}
\newcommand{\CP}{{\mathcal {P}}}
\newcommand{\CS}{{\mathcal {S}}}
\newcommand{\Fg}{{\mathfrak {g}}}
\newcommand{\Fl}{{\mathfrak {l}}}
\newcommand{\Fm}{{\mathfrak {m}}}
\newcommand{\Fn}{{\mathfrak {n}}}
\newcommand{\Fu}{{\mathfrak {u}}}
\newcommand{\GL}{{\mathrm{GL}}}
\newcommand{\GSp}{{\mathrm{GSp}}}
\newcommand{\GSO}{{\mathrm{GSO}}}
\newcommand{\GSpin}{{\mathrm{GSpin}}}
\newcommand{\Spin}{{\mathrm{Spin}}}
\newcommand{\PGSO}{{\mathrm{PGSO}}}
\newcommand{\PGL}{{\mathrm{PGL}}}
\newcommand{\SL}{{\mathrm{SL}}}
\newcommand{\HSpin}{{\mathrm{HSpin}}}
\newcommand{\PGSp}{{\mathrm{PGSp}}}
\newcommand{\SO}{{\mathrm{SO}}}
\newcommand{\Sp}{{\mathrm{Sp}}}
\newcommand{\tr}{{\mathrm{tr}}}
\newcommand{\vol}{{\mathrm{vol}}}
\newcommand{\wt}{\widetilde}
\newcommand{\pair}[1]{\langle {#1} \rangle}
\newcommand{\back}{\backslash}
\def\A{{\mathbb A}}
\def\BA{{\mathbb A}}
\def\BC{{\mathbb C}}
\def\Fu{{\mathfrak u}}
\def\back{{\backslash}}
\newtheorem{thm}{Theorem}[section]
\newtheorem{prop}[thm]{Proposition}
\newtheorem {conj}[thm]{Conjecture}
\newtheorem {assu}[thm]{Assumption}
\newtheorem {ques/conj}[thm]{Question/Conjecture}
\newtheorem{defn}[thm]{Definition}
\newtheorem{rmk}[thm]{Remark}
\newcommand{\Rmnum}[1]{\expandafter\@slowromancap\romannumeral #1@}
\begin{document}
\renewcommand{\theequation}{\arabic{equation}}
\numberwithin{equation}{section}

\title[BZSV duality]{BZSV duality for some strongly tempered spherical varieties}

\author{Zhengyu Mao}
\address{Department of Mathematics \& Computer Science\\
Rutgers University – Newark\\
Newark, NJ 07102, USA}
\email{zmao@rutgers.edu}

\author{Chen Wan}
\address{Department of Mathematics \& Computer Science\\
Rutgers University – Newark\\
Newark, NJ 07102, USA}
\email{chen.wan@rutgers.edu}

\author{Lei Zhang}
\address{Department of Mathematics\\
National University of Singapore, Singapore}
\email{matzhlei@nus.edu.sg}

\date{}

\subjclass[2020]{Primary 11F67; 11F72}

\keywords{relative Langlands duality, strongly tempered spherical varieties}

\begin{abstract}
We propose two families of relative trace formula comparisons in the study of relative Langlands duality conjectured by Ben-Zvi--Sakellaridis--Venkatesh. This allows us to incorporate numerous relative trace formula comparisons studied during the last four decades under the BZSV duality framework.  For the proposed relative trace formula comparisons associated to some strongly tempered spherical varieties, we will prove the fundamental lemma and smooth transfer in the $p$-adic case. Moreover, inspired by the BZSV duality conjecture, we propose a conjecture regarding the degenerate Whittaker period, which generalizes Lapid-Mao's conjecture of the Whittaker period. 
\end{abstract}

\maketitle

\tableofcontents

\section{Introduction}

\subsection{The relative Langlands duality conjecture}
In \cite{BSV}, Ben-Zvi, Sakellaridis, and Venkatesh proposed a beautiful relative Langlands duality for spherical varieties (in this paper, we will call it BZSV duality). We will briefly explain their conjecture in this subsection. Throughout this paper, $k$ is a global field, $\BA=\BA_k$, $F$ is a local field, and $\psi$ is a non-trivial additive character of $\BA/k$ (resp. $F$) if we are in the global (resp. local) setting. 

The BZSV duality concerns a pair of dual data $(\Delta,\check{\Delta})$ where each side contains 4 datum: $\Delta=(G,H,\rho_H,\iota)$ and $\hat{\Delta}=(\hat{G},\hat{H}',\rho_{\hat{H}'},\hat{\iota}')$. Here $G$ is a split reductive group;  $H$ is a split reductive subgroup of $G$;  $\rho_H$  is a symplectic representation of $H$; and  $\iota$ is a homomorphism from $\SL_2$ into $G$ whose image commutes with $H$. The map $\iota$ induces a homomorphism $H\times \SL_2\rightarrow G$, which will still be denoted by $\iota$.  This map induces an adjoint action of $H\times \SL_2$ on the Lie algebra $\Fg$  of $G$ and we can decompose it as
$$\oplus_{k\in I} \rho_k\otimes Sym^k$$
where $\rho_k$ is some representation of $H$ and $I$ is a finite subset of $\BZ_{\geq 0}$. We let $I_{odd}$ be the subset of $I$ containing all the odd numbers. There are two main requirements for the quadruple $(G,H,\rho_H,\iota)$.

\begin{enumerate}
\item The representation $\rho_{H,\iota}=\rho_H\oplus (\oplus_{i\in I_{odd}} \rho_i)$ is a symplectic anomaly-free representation (see Section 5 of \cite{BSV}) of $H$.
\item The Hamiltonian space associated to the quadruple $(G,H,\rho_H,\iota)$ (defined in Section 3 of \cite{BSV}) is hyperspherical (Section 3.5 of \cite{BSV}). In particular, its generic stabilizer is connected.
\end{enumerate}

We refer the reader to \cite{BSV} for more details. Note that under BZSV duality, the group  $\hat{G}$ is the Langlands dual group of $G$ and $\hat{H}'=\hat{G}_{\Delta}$  can be viewed as the ``dual group" of the quadruple $\Delta$ (note that the groups $H$ and $\hat{H}$' are not dual to each other in general, and the nilpotent orbits $\iota$ and $\hat{\iota}'$ are also not dual to each other in general).  Let $(G,H,\rho_H,\iota)$ and $(\hat{G},\hat{H}',\rho_{\hat{H}'},\hat{\iota}')$ be two quadruples that are dual to each other under the BZSV duality. We use $\rho_{H,\iota}$ and $\rho_{\hat{H}',\hat{\iota}'}$ to denote the symplectic anomaly-free representations associated to these quadruples. As we explained above, the maps $\iota$ and $\hat{\iota}'$ induce adjoint actions of $H\times \SL_2$ (resp. $\hat{H}'\times \SL_2$) on $\Fg$ (resp. $\hat{\Fg}$) and they can be decomposed as 
$$\Fg=\oplus_{k\in I} \rho_k\otimes Sym^k,\;\hat{\Fg}=\oplus_{k\in \hat{I}} \hat{\rho}_k\otimes Sym^k$$
where $\rho_k$ (resp. $\hat{\rho}_k$) are representations of $H$ (resp. $\hat{H}'$). It is clear that the adjoint representation of $H$ (resp. $\hat{H}'$) is a subrepresentation of $\rho_0$ (resp. $\hat{\rho}_0$). 

To simplify the notation, we assume that the center of $G$ (resp. $\hat{G}$) is finite \footnote{otherwise we need to modulo the center in the definition of the period integrals}. Now we define the period integrals associated to the datum. We have a symplectic representation $\rho_{H,\iota}:H\rightarrow \Sp(V)$. Let $Y$ be a maximal isotropic subspace of $V$ and $\Omega_\psi$  be the Weil representation of $\widetilde{\Sp}(V)$ on the Schwartz space $\CS(Y(\BA))$. The anomaly free condition on $\rho_{H,\iota}$ ensures $\widetilde{\Sp}(V)$ splits over $Im(\rho_{H,\iota})$ and $\Omega_\psi$ restricts to a representation of $H(\BA)$ on $\CS(Y(\BA))$. We define the theta series
$$\Theta_{\psi}^{\varphi}(h)=\sum_{X\in Y(k)} \Omega_{\psi}(h)\varphi(X),\;h\in H(\BA),\varphi\in \CS(Y(\BA)),$$
and we can define the period integral to be
$$\CP_{H,\iota,\rho_H}(\phi,\varphi)=\int_{H(k)\back H(\BA)} \CP_\iota(\phi)(h)\Theta_{\psi}^{\varphi}(h)dh.$$
Here $\CP_\iota$ is the degenerate Whittaker period associated to $\iota$ (we refer the reader to equation \eqref{eq: defdegWhit} or \eqref{eq: defdegWhit2} in \S\ref{sec: degWhit} for its definition). Similarly we can also define the period integral $\CP_{\hat{H}',\hat{\iota}',\rho_{\hat{H}'}}(\phi,\varphi)$ on the dual side for automorphic forms $\phi$ of $\hat{G}(\BA)$ and Schwartz functions $\varphi$ of $\hat{Y}'(\BA)$ where $\hat{Y}'$ is a maximal isotropic subspace of the symplectic representation $\rho_{\hat{H}',\hat{\iota}'}$. The following conjecture is the main conjecture regarding global periods in BZSV duality.

\begin{conj}\label{BSV conj} (Ben-Zvi--Sakellaridis--Venkatesh, \cite{BSV})
\begin{enumerate}
\item Let $\pi$ be an irreducible discrete automorphic representation of $G(\BA)$ and let $\nu:\pi\rightarrow L^2(G(k)\back G(\BA))_{\pi}$ be an embedding. Then the period integral 
$$\CP_{H,\iota,\rho_H}(\phi,\varphi),\;\phi\in Im(\nu),\;\varphi\in \CS(Y(\BA))$$
is nonzero only if the Arthur parameter of $\pi$ factors through $\hat{\iota}':\hat{H}'(\BC)\times \SL_2(\BC)\rightarrow \hat{G}(\BC)$. If this is the case, $\pi$ is a lifting of a global tempered Arthur packet $\Pi$ of $H'(\BA)$ (the Langlands dual group of $\hat{H}'$). Then we can choose the embedding $\nu$ so that 
$$\frac{|\CP_{H,\iota,\rho_H}(\phi,\varphi)|^2}{\pair{\phi,\phi}}``=" \frac{L(1/2,\Pi,\rho_{\check{H}})\cdot\Pi_{k\in \hat{I}}L(k/2+1,\Pi,\hat{\rho}_k)}{L(1,\Pi,Ad)^2},\;\phi\in Im(\nu),\;\varphi\in \CS(Y(\BA)).$$
Here $\pair{,}$ is the $L^2$-norm, and $``="$ means the equation holds up to some Dedekind zeta functions, some global constant determined by the component group of the global  L-packet associated to $\pi$, and some finite product over the ramified places  (including all the archimedean places).
\item Let $\pi$ be an irreducible discrete automorphic representation of $\hat{G}(\BA)$ and let $\nu:\pi\rightarrow L^2(\hat{G}(k)\back \hat{G}(\BA))_{\pi}$ be an embedding. Then the period integral 
$$\CP_{\hat{H}',\hat{\iota}',\rho_{\hat{H}'}}(\phi,\varphi),\;\phi\in Im(\nu),\;\varphi\in \CS(\hat{Y}'(\BA))$$ 
is nonzero only if the Arthur parameter of $\pi$ factors through $\iota:H(\BC)\times \SL_2(\BC)\rightarrow G(\BC)$. If this is the case,  $\pi$ is a lifting of a global tempered Arthur packet $\Pi$ of $\hat{H}(\BA)$ (the Langlands dual of $H$). Then we can choose the embedding $\nu$ so that 
$$\frac{|\CP_{\hat{H}',\hat{\iota}',\rho_{\hat{H}'}}(\phi,\varphi)|^2}{\pair{\phi,\phi}}``=" \frac{L(1/2,\Pi,\rho_H)\cdot\Pi_{k\in I}L(k/2+1,\Pi,\rho_k)}{L(1,\Pi,Ad)^2},\;\phi\in Im(\nu),\;\varphi\in \CS(\hat{Y}'(\BA)).$$
\end{enumerate}
\end{conj}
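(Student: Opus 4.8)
The plan is to prove Conjecture~\ref{BSV conj} (for the strongly tempered quadruples at hand; the general case being currently out of reach) by a \emph{comparison of relative trace formulas}. On the $G$-side, integrate the automorphic kernel $K_f(x,y)$ of the regular representation against $\CP_{H,\iota,\rho_H}(\cdot,\varphi_1)$ in the first variable and the conjugate of $\CP_{H,\iota,\rho_H}(\cdot,\varphi_2)$ in the second; unfolding produces a distribution $I_G(f,\varphi_1,\varphi_2)$ whose spectral expansion is $\sum_\pi |\CP_{H,\iota,\rho_H}|^2/\pair{\phi,\phi}$. On the dual side, one writes the Whittaker--Kuznetsov relative trace formula for $H'(\BA)$ twisted by $\rho_{\check H}$ and $\prod_{k\in\hat I}\hat\rho_k$ --- concretely a product of standard intertwining/Rankin--Selberg integrals realizing exactly these $L$-factors --- whose spectral side is the stable tempered spectrum of $H'$ weighted by $L(1/2,\Pi,\rho_{\check H})\prod_{k\in\hat I}L(k/2+1,\Pi,\hat\rho_k)/L(1,\Pi,\mathrm{Ad})^2$. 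The goal is an identity $I_G(f,\varphi_1,\varphi_2)=I_{H'}(f')$ for test data matched place by place; once this and the usual spectral-isolation machinery are in place, the pointwise formula in the conjecture follows, with the precise $L$-value coming from a local unramified computation and the ``$=$'' absorbing the ramified factors and the component-group constant.

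\emph{Step 1 (geometric reduction).} First I would classify the relevant $H\times H$-orbits --- respectively the double cosets carved out by the degenerate Whittaker datum $\iota$ and the maximal isotropic $Y$ --- on the geometric sides of both formulas, match them bijectively, and thereby reduce the global identity to a family of \emph{local orbital-integral identities}. The hypersphericity hypothesis, in particular connectedness of the generic stabilizer, is what should make this orbit matching clean and kill spurious boundary terms, while strong temperedness of $\rho_{H,\iota}$ should give convergence without truncation.

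\emph{Step 2 (local transfer).} Next come the two local statements that this paper establishes in the $p$-adic case: the \emph{fundamental lemma}, that the unit of the unramified Hecke algebra on the $G$-side matches the unit on the $H'$-side under the orbital-integral correspondence --- provable by explicit evaluation of the orbital integrals together with Casselman--Shalika and unramified-theta (Gauss sum) bookkeeping, or a Bruhat--Tits argument in low rank --- and \emph{smooth transfer} at the remaining non-archimedean places, i.e.\ surjectivity of the transfer between the two spaces of orbital integrals, which one extracts from an explicit local relative-character identity.

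\emph{Step 3 (the main obstacle).} The hard part is twofold. First, archimedean smooth transfer, which is open in essentially all comparisons of this type --- this is precisely why the present results are restricted to the $p$-adic case --- and would require either an explicit model computation at the archimedean places or a holomorphic-continuation/density argument. Second, even granting all transfers, the global comparison demands: (a) a careful spectral expansion of both distributions and isolation of a single near-equivalence class by varying unramified data; (b) control of the continuous spectrum and of the CAP and non-tempered contributions, so that exactly the packets factoring through $\hat\iota'$ (resp.\ $\iota$) survive --- this is the source of the ``nonzero only if'' clause, and ultimately rests on showing the local relative characters vanish off the local Arthur packets predicted by the dual datum; and (c) the final local unramified Rankin--Selberg/Casselman--Shalika unfolding that produces $L(1/2,\Pi,\rho_{\check H})\prod_{k\in\hat I}L(k/2+1,\Pi,\hat\rho_k)/L(1,\Pi,\mathrm{Ad})^2$, plus the bookkeeping of ramified factors and the component-group constant needed to pin down the normalization. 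Part~(2) of the conjecture is treated symmetrically, with the roles of the two dual quadruples interchanged.
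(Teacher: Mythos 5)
The statement you have been asked to ``prove'' is Conjecture~\ref{BSV conj}, attributed to Ben-Zvi--Sakellaridis--Venkatesh; the paper states it as a conjecture and does not prove it. There is no ``paper's own proof'' to measure your attempt against. What the paper does is (i) \emph{propose} a relative-trace-formula program that would yield many cases of the conjecture, and (ii) carry out the local backbone of that program --- the fundamental lemma and $p$-adic smooth transfer --- for the six strongly tempered quadruples of Table~\ref{fig:1}. Your write-up is likewise only a strategy sketch, and as a proof it is incomplete: each of your Steps~1--3 (orbit matching, local transfer, global spectral isolation, archimedean transfer) remains open in the generality you assert, and you neither restrict to the models the paper actually treats nor supply any of the explicit computations the paper contains.

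Beyond the logical point, the RTF comparison you sketch is structurally different from the one the paper proposes, and the difference is substantive. You want a \emph{Hermitian} trace formula on the $G$-side --- pair $K_f$ with $\CP_{H,\iota,\rho_H}(\cdot,\varphi_1)$ in one variable and with $\overline{\CP_{H,\iota,\rho_H}(\cdot,\varphi_2)}$ in the other, so the spectral side directly produces $|\CP_{H,\iota,\rho_H}|^2/\pair{\phi,\phi}$ --- and match it against a Kuznetsov formula for $H'$ that you imagine ``twisting'' by $\rho_{\check H}$ and $\prod_{k\in\hat I}\hat\rho_k$ to manufacture the $L$-ratio in the conjecture. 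The paper does neither. Its $G$-side distribution is the \emph{bilinear} $I(f)=\CP_{\iota'}\bigl(\CP_{H,\iota,\rho_H,1}(K_f)\bigr)$, pairing the BZSV period in one kernel variable with the \emph{degenerate Whittaker period} $\CP_{\iota'}$ attached to the dual nilpotent in the other; and the $H'$-side is the \emph{plain} Kuznetsov RTF (\S\ref{sec RTF}) or, for the four exceptional rank-one models of \eqref{rank 1 models}, Jacquet's Whittaker-against-Hecke RTF on $\PGL_2$ (\S\ref{sec RTF2}). Under this choice the expected discrete spectral ratio is merely $1/L(1,\Pi,\mathrm{Ad})$ (Conjecture~\ref{conj rtf period 1}) or $L(1/2,\Pi)/L(1,\Pi,\mathrm{Ad})$, exactly what the plain Kuznetsov and Jacquet RTFs give via Lapid--Mao; the full $L$-ratio of Conjecture~\ref{BSV conj} is then recovered by \emph{combining} the RTF identity with the separate Conjecture~\ref{conj Whittaker general} on degenerate Whittaker periods. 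That intermediary conjecture --- the bridge from the bilinear RTF output to the Ichino--Ikeda statement --- is the linchpin of the paper's approach and is entirely absent from your sketch. Your Hermitian version would have to build the full numerator $L(1/2,\Pi,\rho_{\check H})\prod_k L(k/2+1,\Pi,\hat\rho_k)$ on the $H'$-side directly, which is not what the paper does and for which there is no mechanism in your sketch; it also cannot explain why the degenerate Whittaker period $\CP_{\iota'}$ or Conjecture~\ref{conj Whittaker general} enter the paper at all. If you want your proposal to track the paper, replace your Hermitian $G$-side pairing with the mixed $\CP_{H,\iota,\rho_H}$-versus-$\CP_{\iota'}$ pairing, match against the untwisted Kuznetsov/Jacquet formula, and invoke Conjecture~\ref{conj Whittaker general} to pass from the resulting bilinear identity (Conjecture~\ref{conj rtf period 1}) to Conjecture~\ref{BSV conj}.
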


\begin{rmk}\label{rmk BSV conj}
The above conjecture is usually called the Ichino-Ikeda type conjecture. To state an explicit identity instead of $``="$, one needs to make two adjustments on the right hand side of the equation.
\begin{itemize}
\item In the ramified places, instead of using the local L-function, one needs to use the so-called local relative character defined by the (conjectural) Plancherel decomposition (see Section 17 of \cite{SV} and Section 9 of \cite{BSV}).
\item One also needs to add some Dedekind zeta functions on the right hand side determined by the groups $G$ and $H$ (in all the known examples, those zeta functions are the $L$-function of the dual $M^{\vee}$ to the motive $M$ associated to $G,H$ introduced by Gross in \cite{Gross}), as well as some global constant determined by component group of the global  L-packet associated to $\pi$ (see Section 14.6.4 of \cite{BSV}).
\end{itemize}

\end{rmk}

The main goal of this paper is to propose a relative trace formula approach to establish many cases of the above conjecture. In \S\ref{sec RTF} and \S\ref{sec RTF2} we propose two families of relative trace formula comparisons. We explain how the relative trace formula comparisons together with a conjecture on the degenerate Whittaker period described in \S\ref{sec: degWhit} imply the equation on the period in Conjecture~\ref{BSV conj}. Some examples of our proposed relative trace formula comparisons are already established (or conjectured) in \cite{FJ, JR, JMR, MR, MR4, MR3, Z}.
We therefore incorporate all these cases under the same framework of BZSV duality. We will provide further evidence of Conjecture~\ref{BSV conj} by establishing the fundamental lemma and smooth transfers for the relative trace formula comparisons in all cases of strongly tempered spherical variety whose dual side is of rank one.

 Namely, we consider the following 6 strongly tempered spherical varieties from the previous paper \cite{WZ} of the second and third authors (we refer the reader to \cite{WZ} and Section 2 for the definition of the unipotent group $U$ of each case). For each of the models in the table below, one can associate dual data $(\Delta,\hat{\Delta})$ in BZSV duality as follows: one quadruple $\Delta$ is $(G,H_0,0,\iota)$ where $\iota$ is the $\SL_2$-embedding associated to the Richardson orbit of $U$ which will be explicitly defined in section \S\ref{sec DMVexplicit} and $0$ is just the trivial 0-dimensional symplectic representation; the dual quadruple $\hat{\Delta}$ is $(\hat{G},\hat{G},\hat{\rho},1)$ where $1$ denotes the $\SL_2$-embedding that maps all the elements to the identity.

\begin{figure}[h!]
\begin{tabular}{| c | c | c |c|c| c|}
\hline
\textnumero & $G$ & $H=H_0\ltimes U$ & $\hat{G}$ &  $\hat{\rho}$  \\
\hline
1 &  $\PGL_6$ & $\PGL_2\ltimes U$ & $\SL_6$ & $\wedge^3$   \\
\hline
2 & $\GSO_8\times \GL_2/\GL_1$ & $\PGL_2\ltimes U$ & $S(\GSpin_8\times \GL_2)$ & $\HSpin_8\otimes {\rm std}_{2}$   \\
\hline
3 & $\PGSO_{12}$ & $\PGL_2\ltimes U$  & $\Spin_{12}$ & $\HSpin_{12}$   \\
\hline
4 & $E_{7,ad}$ & $\PGL_2\ltimes U$ & $E_{7,sc}$  &$\omega_7$  \\
\hline
5 & $\PGSp_{10}$ & $\PGL_2\ltimes U$ & $\Spin_{11}$ & $\Spin_{11}$   \\
\hline
6 & $\GSp_{6}\times \GL_2/\GL_1$ & $\PGL_2\ltimes U$ & $S(\GSpin_7\times \GL_2)$ & $\Spin_{7}\otimes {\rm std}_2$   \\
\hline
\end{tabular}
\captionof{table}{}
\label{fig:1}
\end{figure}

Here $S(\GSpin_n\times \GL_2)=\{(g_1,g_2)\in \GSpin_n\times \GL_2|\;l(g_1)\det(g_2)=1\}$ where $l$ is the similitude character and $\omega_7$ is the 56-dimensional standard representation of $E_7$. These are the only models in \cite{WZ} where the reductive part  $H_0$  is of rank 1. In other words, the dual group of the quadruple $\hat{\Delta}$ is of rank 1 and hence is easy to study. 

%One of the main goals of the paper is to provide strong evidence for the above conjecture in the cases of these strongly tempered spherical varieties.

 For each of the models in the table, we will write down  Conjecture \ref{BSV conj} more explicitly. Indeed, a more explicit version of Part (1) Conjecture \ref{BSV conj} in these cases is stated in \cite[Conjecture 1.7]{WZ}. In this paper, we will concentrate on the dual side, propose a relative trace formula comparison for the dual side, and prove the fundamental lemma and smooth transfer, therefore providing strong evidence for part (2) of  Conjecture \ref{BSV conj} in the cases of these strongly tempered spherical varieties.
 % We refer the reader to Section 2-5 for more details.

\begin{rmk}
In \cite{BSV}, the authors considered only the cases when $G,H$ are split. When the groups $G,H$ are not split, it is not clear at this moment how to define the duality. One of the main obstacles is to define the L-group of quadruple in the non-split case. At this moment, to our knowledge, there are only two cases of duality considered in the non-split case. One is the Galois model case considered by Prasad in \cite{P}. The other one is for the model $(Res_{K/F}\GL_2,\GL_2)$ considered by the first author and Rallis in \cite{MR2} where $K/F$ is a cubic extension.
\end{rmk}

\subsection{A conjecture for degenerate Whittaker period}\label{sec: degWhit}
Motivated by the BZSV conjecture, we make a general conjecture about the degenerate Whittaker coefficient in this subsection. Let $\iota$ be a map from $\SL_2$ into a split reductive group $G$ and let $\CO_\iota$ be the nilpotent orbit of $\Fg$ associated to it. Let 
$$U=\{g\in G|\;\lim_{t\rightarrow 0}\iota(diag(t,t^{-1})) g\iota(diag(t,t^{-1}))^{-1}=1\}.$$
If the nilpotent orbit $\CO_\iota$ is even in the sense of Section 2.3 of \cite{GJ}, $\psi$ induces a generic character $\xi$ of $U(k)\back U(\BA)$ (see Section 2.3.1 of \cite{GJ}) and we define the degenerate Whittaker coefficient to be
\begin{equation}\label{eq: defdegWhit}
	\CP_\iota(\phi)=\int_{U(k)\back U(\BA)}\phi(u)\xi(u)du. 
	\end{equation}
If $\CO_{\iota}$ is not even, then $\psi$ induces a Weil representation $\Omega_{\psi}'$ of $U(\BA)$ on the space of Schwartz functions of some vector space $Z(\BA)$ (see Section 2.3.2 of \cite{GJ}). For $\varphi\in \CS(Z(\BA))$, we can define the theta series 
$$\Theta_{U,\psi}^{\varphi}(u)=\sum_{X\in Z(k)}\Omega_{\psi}'(u)\varphi(X)$$
and we define the degenerate Whittaker coefficient to be 
\begin{equation}\label{eq: defdegWhit2}
	\CP_\iota(\phi,\varphi)=\int_{U(k)\back U(\BA)}\phi(u)\Theta_{U,\psi}^{\varphi}(u)du.
	\end{equation}
To simplify the notation, we will skip $\varphi$ and still use $\CP_\iota(\phi)$ to denote the degenerate Whittaker coefficient. When $\CO_\iota$ is even (resp. non-even), $\CP_\iota$ is also called the Bessel period (resp. the Fourier-Jacobi period).

Given a nilpotent orbit $\hat{\CO}_{\iota}$ of $\hat{\Fg}$, by the duality introduced in the appendix of \cite{BV}, it induces a nilpotent orbit $\CO_\iota$ of $G$ that is special (Definition 1.10 of \cite{BV}). The nilpotent orbit $\hat{\CO}_\iota$ induces a homomorphism $\hat{\iota}:\SL_2\rightarrow \hat{G}$. Let $\hat{H}_{\hat{\iota}}$ be the neutral component of the centralizer of $Im(\hat{\iota})$ in $\hat{G}$. We get a homomorphism $\hat{H}_{\hat{\iota}}\times \SL_2\rightarrow \hat{G}$ which will still be denoted by $\hat{\iota}$. This map induces an adjoint action of $\hat{H}_{\hat{\iota}}\times \SL_2$ on $\hat{\Fg}$ and we can decompose it as
$$\oplus_{k\in \hat{I}} \hat{\rho}_k\otimes Sym^k$$
where $\hat{\rho}_k$ is some representation of $\hat{H}_{\hat{\iota}}$ and $\hat{I}$ is a finite subset of $\BZ_{\geq 0}$. 

Since the adjoint representation is of orthogonal type, we know that $\hat{\rho}_k$ is of symplectic type when $k$ is odd. We can decompose the symplectic representation $\oplus_{k\in \hat{I}_{odd}}\hat{\rho}_k$ (here $\hat{I}_{odd}$ is again the subset of odd integers) as 
\begin{equation}\label{decomposition of odd 1}
\oplus_{k\in \hat{I}_{odd}}\hat{\rho}_k=(\oplus_{j}\tau_j\oplus \tau_{j}^{\vee})\oplus (\oplus_{i} \sigma_i) 
\end{equation}
where $\sigma_i$ are distinct self-dual irreducible representations of $\hat{H}$ of symplectic type. In particular, $\sigma_i$ are those irreducible symplectic representations that appear odd times in $\oplus_{k\in \hat{I}_{odd}}\hat{\rho}_k$. We then define
\begin{equation}\label{decomposition of odd 2}
\hat\rho_{\hat{\iota}}=\oplus_{i} \sigma_i.
\end{equation}

Let $\pi$ be an irreducible discrete  automorphic representation of $G(\BA)$ and let $\nu:\pi\rightarrow L^2(G(k)\back G(\BA))_{\pi}$ be an embedding. We assume that the Arthur parameter of $\pi$ factors through $\hat{\iota}$ (i.e. $\pi$ is a lifting of a global tempered Arthur packet $\Pi$ of $H_{\hat\iota}(\BA)$ where $H_{\hat\iota}$ is the dual group of $\hat{H}_{\hat\iota}$). 

\begin{conj}\label{conj Whittaker general}
With the notation above, one can choose the embedding $\nu$ so that 
$$\frac{|\CP_\iota(\phi)|^2}{\pair{\phi,\phi}}``=" \frac{L(1/2,\Pi,\hat\rho_{\hat{\iota}})}{\Pi_{k\in \hat{I}}L(k/2+1,\Pi,\hat{\rho}_k)},\;\phi\in \nu(\pi).$$
%Here $\pair{,}$ is the $L^2$-norm and $\hat{I}_{odd}$ is the subset of $\hat{I}$ containing all the odd numbers.
\end{conj}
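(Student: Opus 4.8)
Since this is a conjecture, what follows is a strategy rather than a proof. The plan is to reduce the degenerate Whittaker identity for $\pi$ on $G(\BA)$ to a principal Whittaker-period identity of Lapid-Mao type for the tempered packet $\Pi$ on the smaller group $H_{\hat\iota}(\BA)$, and to recover the $L$-values in the quotient from the explicit functorial transfer $\Pi\rightsquigarrow\pi$. The starting point is that, since the Arthur parameter of $\pi$ factors through $\hat\iota\colon\hat H_{\hat\iota}\times\SL_2\to\hat G$, the representation $\pi$ is realized concretely from $\Pi$ by iterated residues of cuspidal Eisenstein series and/or theta lifts, the precise recipe depending on the orbit $\CO_\iota$ --- with the decomposition $\hat\Fg=\oplus_{k\in\hat I}\hat\rho_k\otimes\mathrm{Sym}^k$ prescribing which inducing data and which theta kernels occur.

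First I would substitute this explicit realization of $\pi$ into $\CP_\iota(\phi)$ as in \eqref{eq: defdegWhit}--\eqref{eq: defdegWhit2}, and then unfold the integration over $U(k)\backslash U(\BA)$ (together with the theta integral over $Z$ in the Fourier-Jacobi case). One expects the degenerate Whittaker period to collapse to a period integral over $H_{\hat\iota}(k)\backslash H_{\hat\iota}(\BA)$ of a cusp form in $\Pi$ against a Rankin-Selberg / doubling-type kernel attached to the symplectic representation $\hat\rho_{\hat\iota}$ of \eqref{decomposition of odd 2} --- this is the global descent mechanism of Ginzburg-Rallis-Soudry, adapted to the orbit $\CO_\iota$. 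The technical content here is the vanishing of the ``wrong'' Fourier coefficients of the lift $\pi$ and the identification of the surviving period.

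Granting the unfolding, the squared degenerate Whittaker period, treated in the manner of the Ichino-Ikeda derivation, should produce $L(1/2,\Pi,\hat\rho_{\hat\iota})$ --- the numerator --- together with the squared Whittaker-Fourier coefficient of a cusp form in $\Pi$, which by Lapid-Mao's identity contributes $1/L(1,\Pi,\mathrm{Ad}_{H_{\hat\iota}})$; since $\hat\rho_0$ is the adjoint representation of $\hat H_{\hat\iota}$, this is precisely the $k=0$ factor of the denominator. The remaining factors $\prod_{k\in\hat I,\,k>0}L(k/2+1,\Pi,\hat\rho_k)$ should emerge from the ratio of Petersson norms between the non-tempered $\pi$ and its cuspidal support $\Pi$, i.e. from the residues of the Eisenstein series and the normalization of the theta kernels used to realize $\pi$. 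Assembling these, and tracking as usual the archimedean and ramified local relative characters together with the Dedekind zeta factors attached to $G$ and $H_{\hat\iota}$, would upgrade the conjectural proportionality to an explicit identity.

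The main obstacle is the unfolding step: for a general special orbit $\CO_\iota$ there is no formal reason the degenerate Whittaker integral of the lift collapses cleanly, and proving the vanishing of the wrong Fourier coefficients and pinning down the surviving period is known only orbit-by-orbit, resting on the hyperspherical / multiplicity-one structure built into the BZSV datum. A second, structural difficulty is that the target --- Lapid-Mao's conjecture for $\Pi$ on $H_{\hat\iota}$ --- is itself open in general, so unconditionally one obtains at best a reduction, with a genuine proof feasible only when both the descent and the Whittaker-period identity on $H_{\hat\iota}$ are available. This is the realistic situation for the six models of Table~\ref{fig:1}, where $\hat H_{\hat\iota}$ has rank one and the descent period is a classical Gan-Gross-Prasad or Fourier-Jacobi period; in those cases one may alternatively bypass the descent entirely and extract the identity from the relative trace formula comparisons set up in \S\ref{sec RTF}--\S\ref{sec RTF2}, whose fundamental lemma and smooth transfer occupy the rest of the paper.
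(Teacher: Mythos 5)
The statement is a conjecture, and the paper does not attempt a proof --- it offers motivation and evidence. The paper's evidence consists of (i) consistency with Lapid-Mao's Whittaker-period conjecture in the extreme case $\hat\CO_\iota=0$, (ii) consistency with the Lapid-Rallis unramified intertwining computation for principal-in-Levi orbits, and, most substantively, (iii) consistency of the whole system of relative trace formula conjectures in \S\ref{sec RTF}--\S\ref{sec RTF2}, whose local ingredients (fundamental lemma and smooth transfer) are what Sections 4 and 5 actually establish for the six models of Table~\ref{fig:1}; Section 6 also imports the $\Sp$ comparison of \cite{MR3} via Proposition~\ref{prop model transit}. Your descent-and-unfold strategy is a genuinely different and more ambitious route: rather than constraining $\CP_\iota$ indirectly through RTF identities, you propose to realize $\pi$ explicitly from $\Pi$, unfold the degenerate Whittaker integral, collapse it to a period on $H_{\hat\iota}$, and invoke Lapid-Mao there. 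This is closest in spirit to the GRS-style argument used in the proof of Proposition~\ref{prop model transit}, which trades one unipotent period for another under a nonvanishing hypothesis --- a shadow of the collapse you envision --- but the paper never uses such an argument to attack the conjecture directly. Your dependence structure and list of obstacles are accurate. One correction: the paper only asserts that the adjoint of $\hat H_{\hat\iota}$ is a \emph{subrepresentation} of $\hat\rho_0$, not that they are equal, so the $k=0$ factor of the denominator is $L(1,\Pi,\hat\rho_0)$ and your Lapid-Mao step accounts only for its adjoint piece; the remaining summands of $\hat\rho_0$ would have to be tracked through the residue and theta normalizations alongside the $k>0$ factors.
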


\begin{rmk}
As in Conjecture \ref{BSV conj}, here $``="$ means the equation holds up to some Dedekind zeta functions, some global constant determined by the component group of the global  L-packet associated to $\pi$ and some finite product over the ramified places  (including all the archimedean places). As we mentioned in Remark \ref{rmk BSV conj}, to state an explicit identity,  we need to replace the local L-function with the relative character at the ramified places, and we also need to add some Dedekind zeta functions as well as some global constant determined by the component group of the global  L-packet associated to $\pi$.

However, in the case if $\CO_\iota$ is not the regular nilpotent orbit, it is not clear to us how to define the local relative character at this moment. The reason is that when $\CO_\iota$ is not regular, the degenerate Whittaker model is not unique, and one can not define the local relative character using the Plancherel decomposition as in Section 17 of \cite{SV} and Section 9 of \cite{BSV}. In the regular case, the local relative character was defined by Lapid-Mao in \cite{LM}, which allows them to formulate the Ichino-Ikeda type conjecture for the Whittaker period.
\end{rmk}

When $\hat{\CO}_\iota$ is the zero nilpotent orbit, Conjecture \ref{conj Whittaker general} is compatible with the conjecture of Lapid-Mao for Whittaker period in \cite{LM}. 

When $G=\GL_{2n}$ (resp. $\SO_{2n}$, $\Sp_{4n}$), $\hat{Q}=\hat{L}\hat{N}$ is the parabolic subgroup of $\hat{G}$ with $L=\GL_n\times \GL_n$ (resp. $\GL_n$, $\GL_{2n}$) and $\hat{\CO}_\iota$ is the principal orbit of $\hat{L}$, the above conjecture is compatible with the unramified computation of the local intertwining operator in Proposition 4 of \cite{LR}.

In addition to the above examples, we have some other evidence for the above conjecture from the relative trace formula comparisons we propose below. If the relative trace formula comparisons hold, then Conjecture \ref{conj Whittaker general} is compatible with Conjecture~\ref{BSV conj}. We have much evidence for the proposed relative trace formulas through the prior works in \cite{FJ, JR, JMR, MR, MR4, MR3, Z}, as well as the comparisons proved in this paper.

 %In Section 4-6 we will provide more examples for this conjecture. 

In all examples relevant to the evidence mentioned above, the nilpotent orbits $\hat{\CO}_\iota$  are principal in some Levi subgroups of $\hat{G}$. On the other hand, we do have examples when $\hat{\CO}_\iota$ is not special, and we also have examples where the set $\{\tau_j\oplus \tau_{j}^{\vee}\}$ in the decomposition \eqref{decomposition of odd 1} is not empty. We will describe two such examples in \S\ref{sec orbitexample}.

%In the next two subsections, based on Conjecture \ref{BSV conj} and \ref{conj Whittaker general}, we will propose some relative trace formula comparisons for two categories of spherical varieties.

\subsection{A conjectural relative trace formula comparison for quadruples not related to central values}\label{sec RTF}

Let $(G,H,\rho_H,\iota)$ and $(\hat{G},\hat{H}',\rho_{\hat{H}'},\hat{\iota}')$ be two quadruples that are dual to each other under the BZSV duality. We use $\rho_{H,\iota}$ and $\rho_{\hat{H}',\hat{\iota}'}$ to denote the symplectic anomaly-free representations associated to these quadruples. As we explained above, the map $\hat{\iota}'$ induces adjoint actions of  $\hat{H}'\times \SL_2$ on $\hat{\Fg}$ and it can be decomposed as 
$$\hat{\Fg}=\oplus_{k\in \hat{I}} \hat{\rho}_k\otimes Sym^k$$
where $\hat{\rho}_k$ are representations of  $\hat{H}'$. We let $\CO_{\iota'}$ be the nilpotent orbit of $\Fg$  that is dual to $\hat{\iota}'$. In the previous two subsections we have defined the period integrals $\CP_{H,\iota,\rho_H}$ and $\CP_{\iota'}$. In this subsection we make the following assumption.

\begin{assu}\label{assumption 1}
The symplectic representations $\rho_{\hat{H}'}$ and $\hat{\rho}_{\hat{\iota}'}$ (define in \eqref{decomposition of odd 2}) are zero dimensional. 
\end{assu}

\begin{rmk}
According to Conjecture \ref{BSV conj} and \ref{conj Whittaker general}, the above assumption holds if the half integer values of automorphic $L-$functions do not appear in the numerator in the conjectured identity for the period integral $\CP_{H,\iota,\rho_H}$. However Assumption~\ref{assumption 1} may still hold even when the numerator involves the half integer values. For example when  $(G,H,\rho_H,\iota)=(\GL_n,\GL_{n-1},0,1)$ for $n>2$, by Table (3) of \cite{S} the half integer values of $L-$functions appear in the numerator when $n$ is even, meanwhile Assumption~\ref{assumption 1} holds (see \S\ref{sec orbitexample} for details).
\end{rmk}

Combining the above assumption with Conjecture \ref{BSV conj} and \ref{conj Whittaker general}, we expect the following conjecture.

\begin{conj}\label{conj rtf period 1}
Assume Assumption \ref{assumption 1} holds. Let $\pi$ be an irreducible discrete automorphic representation of $G(\BA)$ and let $\nu:\pi\rightarrow L^2(G(k)\back G(\BA))_{\pi}$ be an embedding. Assume that the Arthur parameter of $\pi$ factors through $\hat{\iota}':\hat{H}'(\BC)\times \SL_2(\BC)\rightarrow \hat{G}(\BC)$ (i.e. $\pi$ is a lifting of a global tempered Arthur packet $\Pi$ of $H'(\BA)$ where $H'$ is the Langlands dual group of $\hat{H}'$). Then we can choose the embedding $\nu$ so that 
$$\frac{\CP_{H,\iota,\rho_H}(\phi,\varphi)\cdot \CP_{\iota'}(\overline{\phi})}{\pair{\phi,\phi}}``=" \frac{1}{L(1,\Pi,Ad)},\;\phi\in Im(\nu),\;\varphi\in \CS(Y(\BA)).$$
\end{conj}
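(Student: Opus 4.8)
The plan is to reduce Conjecture~\ref{conj rtf period 1} to the two conjectures that precede it, namely Conjecture~\ref{BSV conj}(1) and Conjecture~\ref{conj Whittaker general}, by a direct substitution followed by a cancellation of $L$-factors. First, under Assumption~\ref{assumption 1} the symplectic representation $\rho_{\hat H'}$ vanishes, so on the dual side the Hamiltonian datum $\hat\Delta$ carries no theta factor and the ``dual group'' $\hat H_{\hat\iota'}$ attached to the nilpotent orbit $\CO_{\iota'}$ agrees (up to connected components, which we ignore by the hypothesis that generic stabilizers are connected) with $\hat H'$; in particular the global tempered Arthur packet $\Pi$ of $H'(\BA)$ appearing in Conjecture~\ref{BSV conj}(1) is the same packet that appears in Conjecture~\ref{conj Whittaker general} applied to the orbit $\CO_{\iota'}$. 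I would make this identification of packets precise as Step~1, checking that the decompositions $\Fg=\oplus_{k\in I}\rho_k\otimes Sym^k$ used in the two conjectures coincide, so that the products $\prod_{k\in\hat I}L(k/2+1,\Pi,\hat\rho_k)$ occurring in the denominators of Conjecture~\ref{BSV conj}(1) and Conjecture~\ref{conj Whittaker general} are literally the same Euler product.

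\textbf{The cancellation.} Step~2 is the arithmetic of $L$-factors. By the Cauchy--Schwarz philosophy underlying Ichino--Ikeda identities, one expects the embedding $\nu$ can be chosen simultaneously so that both $|\CP_{H,\iota,\rho_H}(\phi,\varphi)|^2/\langle\phi,\phi\rangle$ and $|\CP_{\iota'}(\overline\phi)|^2/\langle\phi,\phi\rangle$ take the conjectured shape; multiplying the square roots gives
\[
\frac{\CP_{H,\iota,\rho_H}(\phi,\varphi)\cdot\CP_{\iota'}(\overline\phi)}{\langle\phi,\phi\rangle}
``="\frac{\sqrt{L(1/2,\Pi,\rho_{\check H})\cdot\prod_{k\in\hat I}L(k/2+1,\Pi,\hat\rho_k)}}{L(1,\Pi,Ad)}
\cdot\frac{\sqrt{L(1/2,\Pi,\hat\rho_{\hat\iota'})}}{\sqrt{\prod_{k\in\hat I}L(k/2+1,\Pi,\hat\rho_k)}}.
\]
Assumption~\ref{assumption 1} forces $\hat\rho_{\hat\iota'}=0$, so $L(1/2,\Pi,\hat\rho_{\hat\iota'})=1$, and under the same assumption the numerator $L(1/2,\Pi,\rho_{\check H})$ also trivializes (this is precisely the content of the remark following Assumption~\ref{assumption 1}: the half-integral values disappear from the numerator of the $\CP_{H,\iota,\rho_H}$ identity). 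The two copies of $\prod_{k\in\hat I}L(k/2+1,\Pi,\hat\rho_k)$ then cancel, leaving $1/L(1,\Pi,Ad)$, which is the claimed right-hand side. One has to be a little careful that the $``="$ fudge factors (Dedekind zeta values, component-group constants, ramified relative characters) combine consistently; I would absorb these into the $``="$ on both input conjectures and note that the Gross motive factors are designed to be multiplicative in exactly this way.

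\textbf{Main obstacle.} The delicate point is \emph{choosing one embedding $\nu$ that works for both period identities at once}. Conjecture~\ref{BSV conj}(1) and Conjecture~\ref{conj Whittaker general} each assert existence of a suitable $\nu$, but a priori the two optimal embeddings differ by a scalar (or, if $\pi$ is not multiplicity one in $L^2$, by an automorphism of the $\pi$-isotypic space). Resolving this requires understanding how the two periods are related on the \emph{same} automorphic realization — essentially a compatibility between the Ichino--Ikeda normalizations of $\CP_{H,\iota,\rho_H}$ and of the degenerate Whittaker period $\CP_{\iota'}$ on $\overline\phi$. In the cases treated in this paper the relative trace formula comparison is meant to supply exactly this: the geometric sides match, and once the spectral decompositions are aligned the ratio of relative characters is forced, which pins down the joint normalization. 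So in practice Step~3 is ``invoke the relative trace formula comparison of \S\ref{sec RTF}'' rather than a standalone argument; the honest statement is that Conjecture~\ref{conj rtf period 1} is \emph{equivalent} to the conjunction of Conjectures~\ref{BSV conj}(1) and \ref{conj Whittaker general} together with this normalization compatibility, and it is the latter that the trace formula is designed to deliver. I would end the proof sketch by flagging that, absent the full Ichino--Ikeda conjectures, what is actually proved here is the geometric-side input (fundamental lemma and smooth transfer) that makes the comparison — and hence this equivalence — meaningful.
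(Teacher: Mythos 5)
Your derivation is exactly what the paper intends: Conjecture~\ref{conj rtf period 1} is not proved in the paper — it is introduced with the single sentence ``Combining the above assumption with Conjecture~\ref{BSV conj} and~\ref{conj Whittaker general}, we expect the following conjecture'' — and your arithmetic (take square roots of the two Ichino--Ikeda-type identities, note that Assumption~\ref{assumption 1} kills both $L(1/2,\Pi,\rho_{\hat H'})$ and $L(1/2,\Pi,\hat\rho_{\hat\iota'})$, then cancel the common $\prod_{k\in\hat I}L(k/2+1,\Pi,\hat\rho_k)$ factors) is the intended reduction. Your Step~1 (checking that the decomposition of $\hat\Fg$ under $\hat H'\times\SL_2$ via $\hat\iota'$ agrees with the one under $\hat H_{\hat\iota'}\times\SL_2$, so the same Euler product appears in both denominators) and your ``Main obstacle'' remark about fixing a single compatible embedding $\nu$ are both genuine subtleties the paper leaves implicit and does not resolve, so flagging them is correct.

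One small misreading worth fixing: you attribute to the remark after Assumption~\ref{assumption 1} the claim that ``the half-integral values disappear from the numerator.'' The remark actually asserts the opposite direction — Assumption~\ref{assumption 1} can hold even when half-integer $L$-values still appear in the numerator (e.g.\ $(\GL_n,\GL_{n-1},0,1)$ with $n$ even, where the odd-$k$ factors $L(k/2+1,\Pi,\hat\rho_k)$ contribute half-integer arguments). This does not break your computation: $L(1/2,\Pi,\rho_{\hat H'})=1$ follows directly from $\rho_{\hat H'}=0$, and the remaining half-integer $L$-values live in $\prod_{k\in\hat I}L(k/2+1,\Pi,\hat\rho_k)$, which cancels against the identical product in the denominator of Conjecture~\ref{conj Whittaker general} exactly as you write; but the justification should cite Assumption~\ref{assumption 1} itself, not the remark.
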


For the rest of this subsection, motivated by the above conjecture, we will propose a relative trace formula comparison to study the period integrals $\CP_{H,\iota,\rho_H}$ and $\CP_{\iota'}$. One side of the relative trace formula is just the Kuznetsov  trace formula for $H'(\BA)$. To be specific, let $N'$ be a maximal unipotent subgroup of $H'$, $\xi'$ be a generic character of $N'(k)\back N'(\BA)$, $f'$ be a Schwartz function on $H'(\BA)$ and $K_{f'}(x,y)$ be the usual kernel function. Then the Kuznetsov relative trace formula is given by
$$J(f')=\int_{N'(k)\back N'(\BA)}\int_{N'(k)\back N'(\BA)} K_{f'}(n_1,n_2)\xi'(n_1^{-1}n_2)\ dn_1\ dn_2.$$
Spectrally, the discrete part of $J(f')$ is of the form
$$J_{disc}(f')=\sum_{\Pi} J_{\Pi}(f')$$
where $\Pi$ runs over all the global discrete generic Arthur packet of $H'(\BA)$ and
$$J_{\Pi}(f')=\sum_{\phi} \int_{N'(k)\back N'(\BA)} \Pi(f)\phi(n)\xi'(n)^{-1}\ dn\ \overline{\int_{N'(k)\back N'(\BA)} \phi(n)\xi'(n)^{-1}\ dn}.$$
Here $\phi$ runs over an orthonormal basis of $\Pi$. According to Lapid-Mao's conjecture for the Whittaker period \cite{LM}, 
$$J_{\Pi}(f')``=" \frac{1}{L(1,\Pi,Ad)}.$$
Hence the discrete part of $J(f')$ is of the form
\begin{equation}\label{spectral expansion 1}
J_{disc}(f')``="\sum_{\Pi} \frac{1}{L(1,\Pi,Ad)}.
\end{equation}

For the other side of the relative trace formula, let $f$ be a Schwartz function of $G(\BA)$ and $K_f(x,y)$ be the usual kernel function. Then the other side of the relative trace formula is given by taking the period integral $\CP_{H,\iota,\rho_H}$ on the first variable of $K_f(x,y)$ and taking the period integral $\CP_{\iota'}$ on the second variable, i.e. we define
$$I(f)=\CP_{\iota'}(\CP_{H,\iota,\rho_H,1}(K_f)),\;\text{where  }\CP_{H,\iota,\rho_H,1}(K_f)(y):=\CP_{H,\iota,\rho_H,1}(K_f(\cdot, y)).$$

Note that we made an abuse of notation here: the distribution denoted $I(f)$ above may also depend on the Schwartz functions used in the definition of the periods $\CP_{H,\iota,\rho_H,1}$ and $\CP_{\iota'}$.

By Conjecture \ref{conj rtf period 1}, the discrete part of this relative trace formula should also have the spectral expansion \eqref{spectral expansion 1}. Thus we expect the following conjectural comparison between these two relative trace formulas.

\begin{conj}\label{conj RTF}
There should be a comparison between the above two relative trace formulas $I(f)$ and $J(f')$.
\end{conj}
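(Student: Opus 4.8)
\medskip
\noindent\textbf{Proof proposal.} The plan is to realise $I(f)$ and $J(f')$ as the two geometric sides of a single relative trace formula identity and then run the standard four-step comparison: geometric expansion of both sides, matching of orbits, local transfer, and spectral comparison.

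First I would expand both distributions geometrically. For $J(f')$ this is classical: inserting $K_{f'}(x,y)=\sum_{\gamma\in H'(k)}f'(x^{-1}\gamma y)$ and collapsing the two copies of $N'(k)\back N'(\BA)$ gives a sum over the double cosets in $N'(k)\back H'(k)/N'(k)$ of relative orbital integrals $\int f'(n_1^{-1}\gamma n_2)\,\xi'(n_1^{-1}n_2)\,dn_1\,dn_2$; the non-relevant cosets (those on which $\xi'$ is nontrivial on the stabiliser) drop out, and the remaining orbits are parametrised by a torus, which in our rank-one situation is essentially $F^{\times}$. For $I(f)$ one unfolds $K_f(x,y)=\sum_{\gamma\in G(k)}f(x^{-1}\gamma y)$ and applies the periods $\CP_{H,\iota,\rho_H}$ in $x$ and $\CP_{\iota'}$ in $y$; formally this produces a sum, over the relevant double cosets of $G(k)$ by $H(k)$ on the left and by the unipotent attached to $\iota'$ on the right, of orbital integrals of $f$ twisted by the relevant generic characters and Weil-representation kernels, indexed by representatives whose stabiliser is connected and on which the twisting is trivial. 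The hypersphericity hypothesis on the quadruple recalled in the introduction is precisely what makes this orbit space well-behaved (and, in particular, forces the generic stabiliser to be connected).

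The geometric heart of the comparison is a matching of orbits: an explicit bijection between the relevant $G$-orbits above and the relevant $H'$-orbits, carrying a transfer factor, such that matched orbits have isomorphic stabilisers and matched orbital integrals transform compatibly. For the models of Table~\ref{fig:1} the reductive part $H_0$ has rank one, so $\hat H'=\hat G$ carries a single continuous invariant, and one expects the relevant invariant on the $G$-side to collapse, case by case, to a single scalar, giving the bijection with $F^{\times}$; this should be done explicitly for each case, in the spirit of \cite{WZ} and of the earlier comparisons \cite{FJ, JR, JMR, MR, MR4, MR3, Z}. With the matching fixed, declare $f=\otimes_v f_v$ and $f'=\otimes_v f'_v$ to be transfers of one another if, at every place, all matched relative orbital integrals agree up to the transfer factor; the two analytic inputs are then the \emph{fundamental lemma}, that $\mathbf 1_{K_v}$ and $\mathbf 1_{K'_v}$ are transfers at almost all places (proved by explicit orbital-integral computations, or by reduction to one of the established fundamental lemmas cited above), and \emph{smooth transfer}, the existence of $f'_v$ for every $f_v$ and vice versa at every place including the archimedean ones (via local harmonic analysis and a partition of unity near each orbit). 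These two statements for the six models are exactly what the present paper establishes. Granting them, the geometric sides of $I(f)$ and $J(f')$ agree for matching pairs $(f,f')$; passing to the spectral side — after restricting, if necessary, to test functions whose support meets only the discrete (even cuspidal) spectrum so as to avoid regularisation, or else after the truncation discussed below — and isolating the $\Pi$-contribution, together with Lapid--Mao's evaluation $J_\Pi(f')``="1/L(1,\Pi,Ad)$ underlying \eqref{spectral expansion 1}, yields Conjecture~\ref{conj rtf period 1}, and hence, in conjunction with Conjectures~\ref{BSV conj} and~\ref{conj Whittaker general}, the BZSV period identity.

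I expect the main obstacle to be analytic rather than formal. Neither $\CP_{H,\iota,\rho_H}$ nor the degenerate Whittaker period $\CP_{\iota'}$ is absolutely convergent against the kernel in general, so the geometric expansion of $I(f)$ above is only formal and must be legitimised by a mixed truncation together with a proof that the truncated distribution differs from the naive one only by terms that vanish or cancel — an Arthur--Zydor-type analysis that is typically the longest and most delicate part of such a program. A secondary difficulty is that when $\CO_{\iota'}$ is not even (the Fourier--Jacobi case) the orbit matching is entangled with the Weil-representation kernel and is combinatorially heavier than in the Bessel case. For the purposes of this paper one sidesteps the convergence problem by establishing only the local geometric comparison — the fundamental lemma and smooth transfer for the six models — which is the content of the later sections and already provides strong evidence for part (2) of Conjecture~\ref{BSV conj}.
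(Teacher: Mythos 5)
This statement is labeled a \emph{Conjecture} in the paper, and the paper contains no proof of it; it is proposed as a framework, and the paper's contribution is to provide evidence by establishing the fundamental lemma and smooth transfer (Theorems~\ref{orbital integral Model 1-4} and~\ref{orbital integral Model 5-6}) for the six models of Table~\ref{fig:1}. Your write-up is therefore not being measured against a proof the paper gives, because there is none.

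That said, your sketch is a reasonable and essentially standard outline of the program one would follow to establish such a comparison, and it is consistent with the way the paper itself frames the problem. You correctly identify the four-step structure (geometric expansion of both sides, orbit matching with transfer factors, fundamental lemma plus smooth transfer at every place, spectral comparison feeding into Conjecture~\ref{conj rtf period 1}); you correctly observe that the relevant invariant collapses to a single scalar in $F^{\times}$ because $H'$ has rank one, which matches the explicit unfoldings the paper performs in Sections~4 and~5 where the regular orbital integrals are indeed parametrised by $a\in F^{\times}$; and you correctly flag the main obstacle as analytic, namely that $\CP_{H,\iota,\rho_H}$ and $\CP_{\iota'}$ are not absolutely convergent against the kernel so that the geometric expansion of $I(f)$ requires a truncation argument in the style of Arthur and Zydor. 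Your closing remark — that the paper sidesteps the convergence problem by proving only the local geometric comparison — is exactly what the paper does. Two minor points worth calibrating: first, the Fourier--Jacobi complications you anticipate in the non-even case are in fact dealt with in Section~5 for Models~5 and~6, where the unfolding through the Weil representation is carried out explicitly, so the paper does address the local side of that difficulty; second, the paper expects the comparison to be a direct point-to-point comparison with ordinary Schwartz test functions (see the discussion after Conjecture~\ref{conj RTF rank 1}), not a ``beyond endoscopy'' type comparison, so the orbit matching is expected to be a genuine bijection rather than merely a correspondence up to stabilisation. Neither of these changes the substance of your outline.
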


The notion of a comparison between two relative trace formulas was introduced by Jacquet; we refer to \cite{J, JY} for some examples of comparisons of relative trace formulas.

In this paper, we provide some evidence for the above conjecture. More specifically, the comparisons of relative trace formulas in Section 3-5 of this paper are special cases of Conjecture \ref{conj RTF} when $(G,H,\rho_H,\iota)$ is the dual of the models in Table \ref{fig:1}. In all the cases, $H'$ is the Langlands dual of $H_0=\PGL_2$, therefore $H'=\SL_2$.

When $(G,H,\rho_H,\iota)$ is the Shalika model of $G=\GL_{2n}$ ($H=\{diag(h,h)|\;h\in \GL_n\}$, $\iota(\begin{pmatrix}1&1\\0&1\end{pmatrix})=\begin{pmatrix}I_n&I_n\\0&I_n\end{pmatrix}$ and $\rho_H=0$), the relative trace formula comparison in Conjecture \ref{conj RTF} recovers the one in \cite{FJ}, where $H'=\SO_{2n+1}$. In case $n=2$, the fundamental lemma for this relative trace formula is proved in \cite{FJ, MR5}.

When $(G,H,\rho_H,\iota)$ corresponding to the symmetric spaces of skew-symmetric matrices (with $G=GL_{2n}$, $H=\Sp_{2n}$, $\rho_H=0$ and $\iota=1$), we have $H'=\GL_n$ and the relative trace formula comparison is established in \cite{JR}.

When $(G,H,\rho_H,\iota)=(\GL_n,\GL_{n-1},0,1)$ with $n>2$, we have $H'=\GL_2$ and the relative trace formula in Conjecture \ref{conj RTF} recovers the one in (2.34) of \cite{MR4}. One can prove the comparison following the same ideas in \cite{MR4}. We will consider this case and provide more examples of Conjecture \ref{conj RTF} in a future paper. We would like to emphasize that if one can prove the relative trace formula comparison in Conjecture \ref{conj RTF} for some specific quadruples $(G,H,\rho_H,\iota)$ and $(\hat{G},\hat{H}',\rho_{\hat{H}'},\hat{\iota}')$, it will be strong evidence for Conjecture \ref{conj Whittaker general} of the degenerate Whittaker period $\CP_{\iota'}$ (this also applies to the comparison in Conjecture \ref{conj RTF rank 1}).

\subsection{A relative trace formula for rank 1 spherical varieties}\label{sec RTF2}
In this subsection we propose a relative trace formula comparison for all the rank 1 spherical varieties. Let $(G,H,\rho_H,\iota)$ and $(\hat{G},\hat{H}',\rho_{\hat{H}'},\hat{\iota}')$ be two quadruples that are dual to each other under the BZSV duality. We make the following assumption.

\begin{assu}\label{assumption 2}
$\iota$ is trivial, $\rho_H$ is zero-dimensional, and $G/H$ is a rank 1 spherical variety (Table (3) of \cite{S}).
\end{assu}

We use the same notation as in the previous subsection. In particular, we still have the decomposition
$$\hat{\Fg}=\oplus_{k\in \hat{I}} \hat{\rho}_k\otimes Sym^k$$
and we still want to study the period integrals $\CP_{H,\iota,\rho_H}$ and $\CP_{\iota'}$. According to Table (3) of \cite{S}, under Assumption \ref{assumption 2}, we have $\hat{H}'=\PGL_2$ or $\SL_2$. Moreover, it also implies that under Assumption \ref{assumption 2}, the quadruple $(G,H,\rho_H,\iota)$ satisfies Assumption \ref{assumption 1} if and only if $\rho_{\hat{H}'}=0$. In particular, for the models in Table (3) of \cite{S}, only the following four models of $(G,H)$ do not satisfy Assumption \ref{assumption 1}:
\begin{equation}\label{rank 1 models}
(\PGL_2,\GL_1), (\SO_{2n+1},\SO_{2n}), (\Sp_4,\Sp_2\times \Sp_2), (G_2,\SL_3).
\end{equation}

If the quadruple also satisfies Assumption \ref{assumption 1}, we can just use the relative trace formula comparison in Conjecture \ref{conj RTF} to study the period integrals $\CP_{H,\iota,\rho_H}$ and $\CP_{\iota'}$, i.e., there should be a comparison between the relative trace formula of the period integrals $\CP_{H,\iota,\rho_H}$ and $\CP_{\iota'}$ with the Kuznetsov relative trace formula for $H'(\BA)$.

For the rest of this subsection, we consider the case when Assumption \ref{assumption 1} fails, i.e., the four models in \eqref{rank 1 models}. In this case, $\hat{H}'=\SL_2$ and $H'=\PGL_2$. As in the previous subsection, one side of our relative trace formula is still
$$I(f)=\CP_{\iota'}(\CP_{H,\iota,\rho_H,1}(K_f)),\;\CP_{H,\iota,\rho_H,1}(K_f)(y):=\CP_{H,\iota,\rho_H,1}(K_f(\cdot, y)).$$

In this case, by Table (3) of \cite{S}, Conjecture \ref{BSV conj} and \ref{conj Whittaker general}, we expect the following conjecture.

\begin{conj}
Let $\pi$ be an irreducible discrete automorphic representation of $G(\BA)$ and let $\nu:\pi\rightarrow L^2(G(k)\back G(\BA))_{\pi}$ be an embedding. Assume that the Arthur parameter of $\pi$ factors through $\hat{\iota}':\hat{H}'(\BC)\times \SL_2(\BC)\rightarrow \hat{G}(\BC)$ (i.e. $\pi$ is a lifting of a global tempered Arthur packet $\Pi$ of $H'(\BA)=\PGL_2(\BA)$). Then we can choose the embedding $\nu$ so that 
$$\frac{\CP_{H,\iota,\rho_H}(\phi,\varphi)\cdot \CP_{\iota'}(\overline{\phi})}{\pair{\phi,\phi}}``=" \frac{L(1/2,\Pi)}{L(1,\Pi,Ad)},\;\phi\in Im(\nu),\;\varphi\in \CS(Y(\BA)).$$
\end{conj}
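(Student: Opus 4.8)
Since $\iota$ is trivial and $\rho_H$ is zero-dimensional, $\CP_{H,\iota,\rho_H}(\phi,\varphi)$ is (up to the constant $\varphi(0)$) the $H$-period $\int_{H(k)\back H(\BA)}\phi(h)\,dh$, so the $G$-side distribution is
$$I(f)=\int_{H(k)\back H(\BA)}\int_{U'(k)\back U'(\BA)}K_f(h,u)\,\Theta_{U',\psi}^{\varphi}(u)\,dh\,du,$$
with $\Theta_{U',\psi}^{\varphi}$ replaced by a generic character of $U'$ when $\CO_{\iota'}$ is even. On the dual side $H'=\PGL_2$, the plan is to pair, in the kernel $K_{f'}(x,y)$ of $H'(\BA)=\PGL_2(\BA)$, a Whittaker period in the variable $x$ against a split-torus (Hecke) period in the variable $y$:
$$J(f')=\int_{N'(k)\back N'(\BA)}\int_{T'(k)\back T'(\BA)}K_{f'}(n,t)\,\xi'(n)^{-1}\,dn\,dt,$$
where $N'$ is the unipotent radical of the Borel, $\xi'$ a generic character, $T'=\{\diag(a,1)\}$ the split maximal torus modulo the centre, and the $t$-integral is taken in a regularized sense (Arthur truncation). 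The reason for this choice is that the Lapid--Mao conjecture for the Whittaker period on $\PGL_2$ (classical here) together with the Hecke--Jacquet--Langlands unfolding $\int_{T'(k)\back T'(\BA)}\phi(t)\,dt=\int_{\BA^\times}W_\phi(\diag(a,1))\,d^{\times}a$ gives the discrete spectral expansion
$$J_{disc}(f')\ ``="\ \sum_{\Pi}\frac{L(1/2,\Pi)}{L(1,\Pi,Ad)},$$
which is exactly the right-hand side of the conjecture. The problem is thereby reduced to a comparison of $I(f)$ with $J(f')$.

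\textbf{The comparison.} The heart of the matter is the geometric comparison. Collapsing $K_f(h,u)=\sum_{\gamma\in G(k)}f(h^{-1}\gamma u)$ writes $I(f)$ as a sum over the relevant $(H\times U')$-orbits in $G$ (relative to the character $\mathbf 1\otimes\xi'$) of relative orbital integrals of $f$, and similarly $J(f')$ becomes a sum over the relevant $(N'\times T')$-orbits in $\PGL_2$ of relative orbital integrals of $f'$; since $G/H$ is of rank one, the space of relevant $G$-orbits is one-dimensional and should be matched, via an explicit isomorphism of invariants, with its one-dimensional counterpart on the $\PGL_2$-side. One would then: (i) set up, at each place $v$, a correspondence $f_v\leftrightarrow f'_v$ of Schwartz functions with $O_\gamma(f_v)=\kappa(\gamma)\,O_{\gamma'}(f'_v)$ on the relevant loci, identifying the transfer factor $\kappa$; (ii) prove the \emph{fundamental lemma}, that $\mathbf 1_{K_v}$ and $\mathbf 1_{K'_v}$ transfer to one another for almost all $v$, by explicit lattice-point counting --- for the four models of \eqref{rank 1 models} the computation is of small enough size to be done directly, and for $(\SO_{2n+1},\SO_{2n})$ it can be related to fundamental lemmas in the Gan--Gross--Prasad circle of ideas; and (iii) prove \emph{smooth transfer}, that every $f_v$ admits a transfer $f'_v$ and conversely, by local harmonic analysis on the relevant loci together with a descent argument handling the finitely many singular orbits.

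\textbf{From the comparison to the identity.} Granting $I(f)=J(f')$ for matching global test functions, it remains to pass to the spectral side. Developing the spectral expansion of $I(f)$ --- regularizing the $H$- and $U'$-integrals against the truncated kernel in Arthur's fashion --- gives $I_{disc}(f)=\sum_\pi I_\pi(f)$, with $I_\pi$ a $G(\BA)$-invariant distribution supported on those $\pi$ whose Arthur parameter factors through $\hat\iota'$; an unramified multiplier argument then separates the spectral contributions and yields $I_\pi(f)=J_\Pi(f')$ for $\pi\leftrightarrow\Pi$. Factorizing $I_\pi(f)=\prod_v I_{\pi_v}(f_v)$ up to a global constant and the contribution of the unramified places, the unramified factors assemble --- via the unramified relative characters predicted for $\CP_H$ by Conjecture~\ref{BSV conj} and for $\CP_{\iota'}$ by Conjecture~\ref{conj Whittaker general}, whose $L$-factors combine by design into $L(1/2,\Pi)/L(1,\Pi,Ad)$ --- into the right-hand side, matching $J_\Pi(f')$ and giving the asserted $``="$. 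For the degenerate model $(G,H)=(\PGL_2,\GL_1)$ the comparison is the identity and one recovers the classical Hecke integral plus Lapid--Mao, while for $(\SO_{2n+1},\SO_{2n})$ the identity is compatible with Gan--Gross--Prasad.

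\textbf{Main obstacle.} The analytically deepest step is the smooth transfer, together with the construction of the correct transfer factor: the $U'$-side carries a \emph{degenerate} (non-regular) Whittaker datum, so the relative orbital integrals entering the $G$-side are of a less classical shape than in the comparisons of \cite{JR, FJ} and the like, and for $(\SO_{2n+1},\SO_{2n})$ one must in addition control the continuous-spectrum contributions to both the geometric and the spectral expansions. I also expect that pinning down the \emph{precise} identity --- rather than the relation $``="$ --- at the ramified places will remain out of reach for the time being, precisely because, as noted after Conjecture~\ref{conj Whittaker general}, when $\CO_{\iota'}$ is not regular there is no Plancherel-theoretic definition of the local relative character attached to $\CP_{\iota'}$.
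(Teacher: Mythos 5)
The statement you are attempting to prove is a conjecture, and the paper offers no proof of it: it is posed as an expectation obtained by combining Conjecture~\ref{BSV conj}, Conjecture~\ref{conj Whittaker general}, and the dual-group and $L$-function data for rank-one spherical varieties in Table~(3) of \cite{S}. Your proposal takes a genuinely different route. Rather than derive the identity from those two conjectures, you set up the relative trace formula comparison between $I(f)$ --- the $H$-period paired against the degenerate Whittaker period $\CP_{\iota'}$ in the kernel on $G$ --- and Jacquet's $J(f')$ --- Whittaker against Hecke for $H'=\PGL_2$ --- compute $J_{disc}(f')$ by Hecke unfolding together with the $\PGL_2$ Whittaker normalization, and propose to prove the comparison by fundamental lemma plus smooth transfer; this is precisely the comparison the paper records separately as Conjecture~\ref{conj RTF rank 1}. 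The logical direction, however, is reversed: in the paper the displayed period identity is used to predict the spectral expansion \eqref{spectral expansion 2} of $I(f)$ and thereby to motivate the comparison conjecture, whereas you would establish the comparison first and deduce the period identity from it. Both directions are legitimate, and your route is the one the paper itself intends to carry out later; within the present paper the comparison is explained only for $(\Sp_4,\Sp_2\times\Sp_2)$ via \cite{MR3} (cf.\ \S\ref{sec non special}) and the $(\SO_5,\SO_4)$ fundamental lemma of \cite{Z}, while $(\SO_{2n+1},\SO_{2n})$ and $(G_2,\SL_3)$ are explicitly postponed to a future paper. So your proposal is a sound program and reflects the correct framework, but as written it reduces the target conjecture to another open conjecture rather than proving it: the comparison itself, in particular the smooth transfer and the spectral isolation step that you yourself flag as the main obstacles, is left unestablished, and that is exactly where the mathematical content lies.
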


Based on the above conjecture, as in the previous subsection, the discrete part of the spectral expansion of $I(f)$ should be of the form
\begin{equation}\label{spectral expansion 2}
I_{disc}(f)``="\sum_{\Pi} \frac{L(1/2,\Pi)}{L(1,\Pi,Ad)}.
\end{equation}
where $\Pi$ runs over all the global discrete generic Arthur packet of $H'(\BA)=\PGL_2(\BA)$.

For the other side of the relative trace formula, let $f'$ be a Schwartz function of $H'(\BA)=\PGL_2(\BA)$ and let $K_{f'}(x,y)$ be the kernel function. Let $T=\{diag(a,1)\}\subset H'$, $N'$ be a maximal unipotent subgroup of $H'$, and $\xi'$ be a generic character of $N'(k)\back N'(\BA)$. Then the other side of the relative trace formula is given by 
$$J(f')=\int_{N'(k)\back N(\BA)}\int_{T(k)\back T(\BA)}K_{f'}(x,y)\xi'(x)^{-1}dydx$$
i.e., we take the Whittaker period on the first variable and the Hecke period on the second variable. This relative trace formula was first considered by Jacquet in \cite{J}. It is well known that the discrete part of the spectral expansion of this relative trace formula is also equal to \eqref{spectral expansion 2}. This leads to the following conjectural comparison between these two relative trace formulas.

\begin{conj}\label{conj RTF rank 1}
There should be a comparison between the above two relative trace formulas $I(f)$ and $J(f')$.
\end{conj}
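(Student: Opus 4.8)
The plan is to follow Jacquet's paradigm: reduce the sought global identity $I(f)=J(f')$, for matching test functions, to a term-by-term matching of the geometric expansions of the two relative trace formulas, and reduce the latter to two purely local statements at each place of $k$---a fundamental lemma for the unramified Hecke algebras and a smooth transfer of arbitrary test functions. First I would set both distributions up as honest, or suitably regularized, objects. The side $J(f')$ is Jacquet's mixed Whittaker--Hecke relative trace formula for $\PGL_2$ from \cite{J}; its geometric side is a sum of relative orbital integrals indexed by the $N'\times T$-double cosets in $\PGL_2$ on which $\xi'$ is trivial, together with the well-understood contribution of the degenerate coset. The side $I(f)=\CP_{\iota'}(\CP_{H,\iota,\rho_H,1}(K_f))$ pairs the kernel against the spherical period of the rank-one variety $G/H$ in the first variable and the degenerate Whittaker period $\CP_{\iota'}$ attached to $\CO_{\iota'}$ in the second; I would establish its convergence---or, more realistically, a mixed truncation of the $G/H$-period together with the absolute convergence of $\CP_{\iota'}$ coming from the fact that $\CO_{\iota'}$ is the dual of $\hat\iota'$---and then unfold it into a sum of relative orbital integrals indexed by the relevant $H\times U_{\iota'}$-orbits in $G$, where $U_{\iota'}$ is the unipotent group entering the definition of $\CP_{\iota'}$ as in \S\ref{sec: degWhit} (in the Fourier--Jacobi cases these orbital integrals carry an extra partial-Fourier-transform factor coming from the Weil representation in \eqref{eq: defdegWhit2}).

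The core is the local comparison, which I would carry out in three steps. First, orbit matching: I would show that the geometric invariants of the rank-one spherical variety $G/H$ and of the Bessel/Fourier--Jacobi datum attached to $\CO_{\iota'}$ on $G$ are jointly encoded by a single ``$\PGL_2$-type'' parameter, yielding a bijection between the relevant $H\times U_{\iota'}$-orbits in $G$ and the relevant $N'\times T$-double cosets in $\PGL_2$; the four cases in \eqref{rank 1 models}---namely $(\PGL_2,\GL_1)$, $(\SO_{2n+1},\SO_{2n})$, $(\Sp_4,\Sp_2\times\Sp_2)$ and $(G_2,\SL_3)$---should then be treatable uniformly once this parameter is isolated, since in all of them $\hat{H}'=\SL_2$ and $H'=\PGL_2$. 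Second, transfer factors and matching: I would pin down the transfer factors, define $f\leftrightarrow f'$ to mean that the corresponding relative orbital integrals agree for all matching orbits, and normalize so that unit elements correspond. Third, the fundamental lemma: for $f=\mathbf{1}_{K}$ and $f'=\mathbf{1}_{K'}$ at an unramified place, the matching should reduce---after a Casselman--Shalika-type expansion of the unramified spherical vector along $H$ and the computation of the degenerate Whittaker functional on the unramified vector---to an identity of unramified orbital integrals that one can attack either by a direct Macdonald/Hironaka-style evaluation or, preferably, by an induction on the semisimple rank of $G$ along a parabolic stabilizing the relevant flag, exactly in the spirit of the treatment of the Table~\ref{fig:1} models in the later sections of this paper and of \cite{FJ, JR, MR4}.

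The step I expect to be the main obstacle is the smooth transfer: proving that every $f\in C_c^\infty(G(F))$ has a matching $f'\in C_c^\infty(\PGL_2(F))$, and conversely, both for $F$ $p$-adic and at the archimedean places. The difficulties are the usual ones---controlling the asymptotic behaviour of the relative orbital integrals near the singular locus so that the jump relations forcing surjectivity of the transfer map hold, and, in the Fourier--Jacobi cases, handling the fact that these orbital integrals are partial Fourier transforms rather than ordinary integrals. I would treat the $p$-adic case by an explicit parametrization of the orbital integrals followed by a local descent and partition-of-unity argument in the spirit of \cite{FJ, JR, MR4}, and the archimedean case by passing to the Lie-algebra level and invoking the relevant density and transfer results. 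Granting the fundamental lemma and smooth transfer, the global comparison follows formally: choosing matching data at every place forces the geometric sides of $I(f)$ and $J(f')$ to coincide, hence also their spectral sides; comparing with the spectral expansion \eqref{spectral expansion 2} on each side and invoking linear independence of characters together with multiplicity one then yields Conjecture~\ref{conj RTF rank 1} and, as a byproduct, strong evidence for Conjecture~\ref{conj Whittaker general} for $\CP_{\iota'}$ in these four cases.
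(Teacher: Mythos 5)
What you are proposing a proof of is stated in the paper as a \emph{conjecture}, and the paper explicitly defers its proof: ``We will prove this comparison in a future paper.'' There is therefore no proof in the paper to compare your attempt against. What the paper does supply is (i) a heuristic motivation---assuming Conjecture~\ref{BSV conj} and Conjecture~\ref{conj Whittaker general}, the discrete spectral expansion of $I(f)$ should take the form \eqref{spectral expansion 2}, while the discrete spectral expansion of $J(f')$ is classically known (from \cite{J}) to take the same form, so the conjectured comparison is simply the geometric shadow of this expected spectral agreement---and (ii) partial evidence: the $(\PGL_2,\GL_1)$ case is trivial; the $(\Sp_4,\Sp_2\times\Sp_2)$ case is reduced in Section~\ref{sec non special} to the established comparison of \cite{MR3} via Proposition~\ref{prop model transit}, which identifies (up to the paper's $``="$ equivalence) the degenerate Whittaker period $\CP_\iota$ with the more accessible period $\CP_{U,\xi'}$; and the $(\SO_5,\SO_4)$ fundamental lemma is quoted from \cite{Z}.

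Your proposal is a sensible outline of the relative-trace-formula comparison paradigm---geometric unfolding, orbit matching with transfer factors, fundamental lemma, smooth transfer---and this is indeed the paradigm the paper carries out in detail for the six models of Table~\ref{fig:1} in Sections 3--5, and presumably the one intended for the deferred future work. But it is a plan, not a proof: none of the hard steps---parametrizing the $H\times U_{\iota'}$-orbits and the $N'\times T$-double cosets, pinning down transfer factors, establishing the fundamental lemma identity, proving the jump relations that force surjectivity of the smooth transfer---is actually carried out, and you yourself flag the smooth transfer as ``the main obstacle.'' You also miss the one mechanism the paper does make precise, namely Proposition~\ref{prop model transit}: for $(\Sp_4,\Sp_2\times\Sp_2)$ it replaces $\CP_\iota$ by $\CP_{U,\xi'}$ and reduces everything to the existing $\GL_{2n}\leftrightarrow\widetilde{\Sp}_{2n}$ comparison of \cite{MR3}---exactly the kind of concrete input your proposed ``uniform treatment of all four cases'' glosses over. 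Finally, you do not engage with the paper's remark that, thanks to Conjecture~\ref{conj Whittaker general}, the comparison here is expected to be \emph{point-to-point} with ordinary Schwartz test functions, as opposed to the beyond-endoscopy-type comparison with generalized test functions in Sakellaridis's rank-one transfer \cite{S}; this distinction shapes how the transfer statement should even be formulated.
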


The above conjecture is an analog of comparing rank 1 spherical varieties in \cite{S}. Thanks to our conjecture of degenerate Whittaker model in Conjecture \ref{conj Whittaker general}, we expect the comparison in Conjecture \ref{conj RTF rank 1} to be a point-to-point comparison instead of the beyond endoscopic type comparison in \cite{S}, also the test functions in this comparison are the standard Schwartz functions instead of some generalized Schwartz functions as in \cite{S}. We will prove this comparison in a future paper. 

\begin{rmk}
Among the four models in \eqref{rank 1 models}, Conjecture \ref{conj RTF rank 1} is trivial when $(G,H)=(\PGL_2,\GL_1)$. When $(G,H)=(\Sp_4,\Sp_2\times \Sp_2)$, we will explain in Section 6 that Conjecture \ref{conj RTF rank 1} essentially follows from the result in \cite{MR3}. The remaining  models $(\SO_{2n+1},\SO_{2n})$ and $(G_2,\SL_3)$ will be considered in a future paper. We note that the fundamental lemma in the $(\SO_5,\SO_4)$ case is already established in \cite{Z}.
\end{rmk}

\subsection{Organization of the paper}
In Section 2, we will explicitly state the BZSV conjecture for models in Table \ref{fig:1}. In Sections 3-5, we will consider the relative trace formula for the dual side of these models. More specifically, in Section 3, we will recall the Kuznetsov relative trace formula for $\SL_2$; in Section 4 (resp. Section 5), we will consider the relative trace formula for Model 1-4 (resp. Model 5-6), and we will prove the fundamental lemma and smooth transfer. In Section 6, we will provide more examples of Conjecture \ref{conj Whittaker general}.

\subsection{Acknowledgement} We thank Yiannis Sakellaridis and Akshay Venkatesh for explaining the BZSV duality to us and for the helpful comments on earlier drafts of this paper. We thank Wee Teck Gan for helpful discussions. The work of the first author is partially supported by the Simons Collaboration Grant. The second author's work is partially supported by the NSF grant DMS-2000192 and DMS-2103720. 
The work of the third author is partially supported by AcRF Tier 1 grants 	A-0004274-00-00 and A-0004279-00-00 of the National University of Singapore.

\section{BSV conjecture for the models in Table \ref{fig:1}} \label{sec DMVexplicit}
In this section, we will explicitly write down the BZSV conjectures for the models in Table \ref{fig:1}. We first recall the table.

\begin{figure}[h!]
\begin{tabular}{| c | c | c |c|c| c|}
\hline
\textnumero & $G$ & $H=H_0\ltimes U$ & $\hat{G}$ &  $\hat{\rho}$\\
\hline
1 &  $\PGL_6$ & $\PGL_2\ltimes U$ & $\SL_6$ & $\wedge^3$   \\
\hline
2 & $\GSO_8\times \GL_2/\GL_1$ & $\PGL_2\ltimes U$ & $S(\GSpin_8\times \GL_2)$ & $\HSpin_8\otimes {\rm std}_{2}$    \\
\hline
3 & $\PGSO_{12}$ & $\PGL_2\ltimes U$  & $\Spin_{12}$ & $\HSpin_{12}$  \\
\hline
4 & $E_{7,ad}$ & $\PGL_2\ltimes U$ & $E_{7,sc}$  &$\omega_7$ \\
\hline
5 & $\PGSp_{10}$ & $\PGL_2\ltimes U$ & $\Spin_{11}$ & $\Spin_{11}$   \\
\hline
6 & $\GSp_{6}\times \GL_2/\GL_1$ & $\PGL_2\ltimes U$ & $S(\GSpin_7\times \GL_2)$ & $\Spin_{7}\otimes {\rm std}_2$  \\
\hline
\end{tabular}
\captionof{table}{}
\label{fig:2}
\end{figure}

All the models in the table are the Whittaker induction of the trilinear $\GL_2$-model. In each case, $G$ has a parabolic subgroup $P=MU$ whose Levi factor $M$ is of Type $A_1\times A_1\times A_1$. We can define a generic character $\xi$ of $U(F)$ (or $U(k)\back U(\BA)$ globally) such that the stabilizer of $\xi$ in $M$ (under the adjoint action) is just $H_0$. If we fix an additive character $\psi$ of $F$ (or $\BA/k$ in the global case), then there exists an element $\Xi\in \bar{\Fu}(F)$ (or $\bar{\Fu}(k)$; here $\bar{P}=M\bar{U}$ is the opposite parabolic subgroup and $\bar{\Fu}$ is the Lie algebra of $\bar{U}$) such that
$$\xi(\exp(X))=\psi(\pair{\Xi,X})),\;X\in \Fu(F)\; (\text{or } \Fu(\BA))$$
where $\pair{,}$ is the Killing form on the Lie algebra.

In the first case, if we use $\alpha_i=e_i-e_{i+1}$ ($1\leq i\leq n$) to denote the simple roots of Type $A_n$, then $P$ is the standard parabolic subgroup associated to the simple roots $\alpha_1,\alpha_3,\alpha_5$ and $\Xi=X_{-\alpha_1-\alpha_2}+X_{-\alpha_2-\alpha_3}+X_{-\alpha_3-\alpha_4}+X_{-\alpha_4-\alpha_5}$ where $X_\beta$ is a non-zero element in the root space of $\beta$.

In the second case, if we use $\alpha_i=e_i-e_{i+1}$ ($1\leq i\leq n-1$) and $\alpha_n=e_{n-1}+e_n$ to denote the simple roots of Type $B_n$, then $P$ is the product of $\GL_2$ with the standard parabolic subgroup of $\GSO_8$ associated to the simple roots $\alpha_1,\alpha_3$ and $\Xi=X_{-\alpha_1-\alpha_2}+X_{-\alpha_2-\alpha_3}+X_{-\alpha_4}$.

In the third case, $P$ is the standard parabolic subgroup of $\GSO_{12}$ associated to the simple roots $\alpha_1,\alpha_3,\alpha_5$ and $\Xi=X_{-\alpha_1-\alpha_2}+X_{-\alpha_2-\alpha_3}+X_{-\alpha_3-\alpha_4}+X_{-\alpha_4-\alpha_5}+X_{-\alpha_6}$.

In the fourth case, consider the following Dynkin diagram for $E_7$:
\begin{figure}[h!]
\begin{tikzpicture}[inner sep=0.5mm,scale=0.75]
\node [circle,draw,label=below:${\alpha_1}$] (1)  at ( 0,0)  {}; 
\node [circle,draw,label=below:${\alpha_3}$] (3) at ( 1,0) {};
\node [circle,draw,label=below:${\alpha_4}$] (4) at ( 2,0) {};
\node [circle,draw,label=below:${\alpha_5}$] (5) at ( 3,0) {}; 
\node [circle,draw,label=below:${\alpha_6}$] (6)  at (4,0) {};
\node [circle,draw,label=below:${\alpha_7}$] (7) at ( 5,0) {};
\node [circle,draw,label=right:${\alpha_2}$] (2)  at ( 2,1) {}; 
\draw  (1) -- (3);
\draw  (3) -- (4);
\draw  (2) -- (4);
\draw  (4) -- (5);
\draw  (5) -- (6);
\draw  (6) -- (7);
\end{tikzpicture}
\end{figure}
$P$ is the standard parabolic subgroup of $E_7$ associated to the simple roots $\alpha_2,\alpha_5,\alpha_7$ and $\Xi=X_{-\alpha_2-\alpha_4}+X_{-\alpha_4-\alpha_5}+X_{-\alpha_5-\alpha_6}+X_{-\alpha_6-\alpha_7}+X_{-\alpha_1}+X_{-\alpha_3}$.

In the fifth case, if we use $\alpha_i=e_i-e_{i+1}$ ($1\leq i\leq n-1$) and $\alpha_n=2e_n$ to denote the simple roots of Type $C_n$, then $P$ is the standard parabolic subgroup of $\GSp_{10}$ associated to the simple roots $\alpha_1,\alpha_3,\alpha_5$ and $\Xi=X_{-\alpha_1-\alpha_2}+X_{-\alpha_2-\alpha_3}+X_{-\alpha_3-\alpha_4}+X_{-\alpha_4-\alpha_5}$.

In the last case, $P$ is the product of $\GL_2$ with the standard parabolic subgroup of $\GSp_6$ associated to the simple roots $\alpha_1,\alpha_3$ and $\Xi=X_{-\alpha_1-\alpha_2}+X_{-\alpha_2-\alpha_3}$.

In each case, $\Xi$ induces an embedding (denoted by $\iota$) from $\SL_2$ into $G$ that maps $\begin{pmatrix}1&0\\1&1\end{pmatrix}$ to $\exp(\Xi)$. Let $T_{\SL_2}$ be the diagonal torus of $\SL_2$. Then it is easy to see that the centralizer of $Im(\iota(T))$ in $G$ is just $M$ and $H_0$ commutes with $Im(\iota)$. This gives us an embedding $\iota:H_0\times \SL_2\rightarrow G$.

For each of the models in Table \ref{fig:2}, in terms of the language of BZSV duality, one of the quadruple is just 
$$(G,H_0,0,\iota).$$
The other quadruple is 
$$(\hat{G},\hat{G},\hat{\rho},1)$$
where $\hat{\rho}$ is given in Table \ref{fig:2} and $1$ is the map from $\SL_2$ to $\hat{G}$ which sends all the elements to identity. 

Next we discuss the sets $I,\hat{I}$ and the representations $\rho_k,\hat{\rho}_k$. Since the $\SL_2$-embedding in the dual side is trivial, we have $\hat{I}=\{0\}$ and $\hat{\rho}_0$ is the adjoint representation of $\hat{G}$. The set $I$ and the representations $\rho_k$ are given by the following table (here we use $std_2$ to denote the standard representation of $H_0=\PGL_2$).

\begin{figure}[h!]
\begin{tabular}{| c | c | c |c|c| c|}
\hline
\textnumero &  $I$ & $\rho_k$ \\
\hline
1 &   $\{0,2,4\}$ & $\rho_0=\rho_2=\rho_4=std_2$  \\
\hline
2 &  $\{0,4\}$ & $\rho_0=std_2\oplus std_2,\;\rho_4=std_2$  \\
\hline
3   & $\{0,4,8\}$ & $\rho_0=\rho_4=\rho_8=std_2$ \\
\hline
4 &    $\{0,8,16\}$ & $\rho_0=\rho_8=\rho_{16}=std_2$ \\
\hline
5 &   $\{0,4,8\}$ & $\rho_0=\rho_4=\rho_8=std_2$ \\
\hline
6 & $\{0,4\}$ & $\rho_0=std_2\oplus std_2,\;\rho_4=std_2$ \\
\hline
\end{tabular}
\captionof{table}{}
\label{fig:2}
\end{figure}

From the table above, we can see that the quadruple $(\hat{G},\hat{G},\hat{\rho},1)$ satisfies Assumption \ref{assumption 1} in Section \ref{sec RTF}. In Sections 3-5, we will prove the fundamental lemma and smooth transfer of the relative trace formula in Conjecture \ref{conj RTF} for these 6 cases. Now, we can state the BZSV duality for these 6 models explicitly. We start from the $(G,H)$-side.

\begin{conj}\label{main conj}
Let $\pi$ be a generic cuspidal automorphic representation of $G(\BA)$. We can find an embedding $\nu:\pi\rightarrow \CA_{cusp}(G(\BA))_\pi$ such that
$$\frac{|\CP_{H_0,\iota,0}(\phi)|^2}{\pair{\phi,\phi}}``=" \frac{L(\frac{1}{2},\pi,\hat{\rho})}{L(1,\pi,Ad)},\;\phi\in \nu(\pi).$$
\end{conj}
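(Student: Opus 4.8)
The natural route is a relative trace formula comparison, realizing Conjecture~\ref{main conj} as part of the relative Langlands duality comparison predicted by \cite{BSV} for the pair $\big((G,H_0,0,\iota),(\hat G,\hat G,\hat\rho,1)\big)$, chained with the comparison whose fundamental lemma and smooth transfer are proved in Sections~3--5 of this paper.

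First I would set up the relative trace formula on $G(\BA)$ attached to $(G,H_0,0,\iota)$: with $K_f(x,y)$ the kernel of $f\in C_c^\infty(G(\BA))$, put $I(f)=\CP_{H_0,\iota,0}\big(\CP_{H_0,\iota,0,1}(K_f)\big)$, i.e. apply in each variable the period $\phi\mapsto\int_{H_0(k)\backslash H_0(\BA)}\int_{U(k)\backslash U(\BA)}\phi(uh)\xi(u)\,du\,dh$. One must prove convergence of $I(f)$ and produce its coarse and fine geometric/spectral expansions; since $\xi$ is a \emph{degenerate} Whittaker datum on $U$ and the outer integration runs over the split reductive $H_0=\PGL_2$, this requires a mixed truncation of Arthur--Jacquet--Lapid--Rogawski type, together with a check that the continuous spectrum contributes only through its discrete boundary data. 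Granting this, $I_{\mathrm{disc}}(f)=\sum_\pi I_\pi(f)$, and the content of Conjecture~\ref{main conj} becomes the assertion that the relative character $I_\pi(f)$ factors, up to the usual fudge, as $L(1/2,\pi,\hat\rho)/L(1,\pi,Ad)$ times a product of local relative characters $\prod_v I^*_{\pi_v}(f_v)$ of the spherical variety $H\backslash G$.

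Next — the decisive step — I would identify and prove the comparison. By the philosophy of \cite{BSV} the partner of $I(f)$ is not a classical Jacquet comparison but the Fourier-dual relative trace formula on $\hat G(\BA)$ attached to $(\hat G,\hat G,\hat\rho,1)$, built from the theta-period $\CP_{\hat G,1,\hat\rho}$; Sections~3--5 match \emph{that} relative trace formula with the Kuznetsov relative trace formula of $\SL_2(\BA)$ (fundamental lemma and smooth transfer), whose discrete spectral side is $\sum_\Pi 1/L(1,\Pi,Ad)$ by Lapid--Mao \cite{LM}. Carrying these identifications backwards through the duality is what promotes the shape $1/L(1,Ad)$ to $L(1/2,\pi,\hat\rho)/L(1,\pi,Ad)$ once the parameters are transported across the Langlands dual, i.e. gives Conjecture~\ref{main conj}. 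Alternatively one might try to avoid the Fourier-dual step model by model, inserting an Eulerian integral representation of $L(s,\pi,\hat\rho)$ — an exterior-cube integral for $\PGL_6$, and the various Spin integrals for $\PGSO_{12}$, $\PGSp_{10}$, $E_{7,ad}$ and $\GSp_6\times\GL_2$ — into a Kuznetsov formula for $G$, reducing to an ordinary Jacquet comparison. Either way the global statement reduces to a fundamental lemma for the unit Hecke elements and to smooth transfer at the ramified (including archimedean) places; the $p$-adic fundamental lemma should follow by the orbit-by-orbit method of Sections~4--5, since every model is a Whittaker induction of the trilinear $\GL_2$ model and the relevant relative orbital integrals, after carrying out the $U$-integration, collapse to those of the trilinear $\GL_2$ model and its parabolic inductions (where the fundamental lemma and transfer are known), and archimedean transfer is the usual soft density point. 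Running the spectral comparison — using multiplicity one for the trilinear $\GL_2$ model (Prasad) and for the generalized Whittaker model to factor the global relative characters — then isolates each generic cuspidal $\pi$, and matching the Euler factors produced on the two sides gives the identity, the symbol $``="$ absorbing the Gross motive zeta factors, the component-group constant and the archimedean relative characters as in Remark~\ref{rmk BSV conj}.

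The main obstacle is precisely this middle step. The quadruple $(G,H_0,0,\iota)$ does not satisfy Assumption~\ref{assumption 1}, and because here $\hat H'=\hat G$ is the \emph{whole} dual group — so $H'=G$, and there is no smaller Kuznetsov formula to compare with directly — neither Conjecture~\ref{conj RTF} nor Conjecture~\ref{conj RTF rank 1} applies; one genuinely has to make the Fourier-dual comparison with the $\hat G$-relative trace formula precise and prove its fundamental lemma, or else construct and fully control a usable Eulerian integral for $L(1/2,\pi,\hat\rho)$ in each of the six cases. The remaining difficulties — the convergence and truncation of $I(f)$ for this degenerate Whittaker datum, and the unwinding of the non-cuspidal spectrum — are the expected secondary ones.
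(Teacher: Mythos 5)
This statement is Conjecture~\ref{main conj}; it is a \emph{conjecture}, and the paper contains no proof of it. The introduction states explicitly that a more explicit form of part~(1) of the BZSV conjecture for these six models already appears as \cite[Conjecture~1.7]{WZ}, and that ``in this paper, we will concentrate on the dual side, propose a relative trace formula comparison for the dual side, and prove the fundamental lemma and smooth transfer, therefore providing strong evidence for part~(2).'' Sections~3--5 establish the fundamental lemma and smooth transfer only for the comparison $I(f,\varphi)\leftrightarrow J(f')$ built from the theta-period $\CP_{\hat G,1,\hat\rho}$ and a degenerate Whittaker coefficient on $\hat G$, matched against the Kuznetsov formula for $\SL_2$; this supports Conjecture~\ref{main conj dual}, not Conjecture~\ref{main conj}. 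So there is no ``paper's proof'' of the displayed statement to compare your proposal against.

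As a research program your sketch is sensible in places, and you correctly diagnose the key structural obstruction: for the quadruple $(G,H_0,0,\iota)$, Assumption~\ref{assumption 1} fails and $\hat H'=\hat G$, so neither Conjecture~\ref{conj RTF} nor Conjecture~\ref{conj RTF rank 1} applies, and there is no smaller Kuznetsov formula on the nose. However, the step you call ``carrying these identifications backwards through the duality'' is not a mathematical operation. The comparison proved in Sections~3--5 yields (conjecturally, once one runs the spectral side) an identity for the relative characters of the theta-period on $\hat G(\BA)$ --- an Ichino--Ikeda statement for $\CP_{\hat G,1,\hat\rho}$ as in Conjecture~\ref{main conj dual}. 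It does not produce an identity for $|\CP_{H_0,\iota,0}(\phi)|^2$ on $G(\BA)$. BZSV duality predicts a \emph{parallelism} between the two sides but does not furnish a deduction of one Ichino--Ikeda identity from the other; one would still need an independent comparison on the $G$-side (your ``Eulerian integral'' alternative), which is exactly what the paper does not attempt. Also note that your proposed $I(f)=\CP_{H_0,\iota,0}\big(\CP_{H_0,\iota,0,1}(K_f)\big)$ is an $|\CP|^2$-type RTF, which is a different shape from anything in the paper's framework; the paper's RTFs always pair a period against a (degenerate) Whittaker coefficient and compare with a Kuznetsov formula, precisely so that Conjecture~\ref{conj Whittaker general} can absorb the $L$-value bookkeeping. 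In short: the statement remains a conjecture in the paper, your sketch correctly locates the obstruction, but the proposed bridge across the duality is a gap, not a proof.
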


\begin{rmk}
In this case, the period integral $\CP_{H_0,\iota,0}(\phi)$ is given by 
$$\CP_{H_0,\iota,0}(\phi)=\int_{H_0(k)\back H_0(\BA)}\int_{U(k)\back U(\BA)}\phi(uh_0)\xi(u)dudh_0.$$
\end{rmk}

We refer the reader to Conjecture 1.7 of \cite{WZ} for a more explicit version of Conjecture \ref{main conj} (i.e., the one that includes the local relative character, all the Dedekind zeta functions, and all the constants). Next we state the conjecture on the dual side. Recall that we have a symplectic representation $\hat{\rho}:\hat{G}\rightarrow \Sp(V)$. Let $Y$ be a maximal isotropic subspace of $V$ and $\Omega_\psi$ be the Weil representation of $\widetilde{\Sp}(V)$ on the Schwartz space $\CS(Y(\BA))$. It is easy to see that $\widetilde{\Sp}(V)$ splits over $Im(\hat{\rho})$ and hence $\Omega_\psi$ restricts to a representation of $\hat{G}(\BA)$ on $\CS(Y(\BA))$.
% which will be denoted by $\rho_{\hat{G},\psi}$ (or sometimes just  $\Omega_\psi$ if the choice of $\hat{G}$ is clear from the context). 
We define the theta series
$$\Theta_{\psi}^{\varphi}(g)=\sum_{X\in Y(k)} \Omega_{\psi}(g)\varphi(X),\;g\in G(\BA),\varphi\in \CS(Y(\BA)).$$

\begin{conj}\label{main conj dual}
Let $\pi$ be an irreducible discrete automorphic representation of $\hat{G}(\BA)$ with trivial central character and let $\nu:\pi\rightarrow L^2(\hat{G}(k)\back \hat{G}(\BA)/Z_{\hat{G}}(\BA))_{\pi}$ be an embedding. Then the period integral
$$\CP_{\hat{G},1,\hat{\rho}}(\phi,\varphi)=\int_{\hat{G}(k)Z_{\hat{G}}(\BA)\back \hat{G}(\BA)} \phi(g)\Theta_{\psi}^{\varphi}(g)\  dg,\;\phi\in \nu(\pi),\;\varphi\in \CS(Y(\BA))$$
is nonzero only if the Arthur parameter of $\pi$ factors through $\iota:H_0(\BC)\times \SL_2(\BC)\rightarrow G(\BC)$. If this is the case, then $\pi$ is a lifting of a global tempered Arthur packet $\Pi$ of $\SL_2(\BA)$ (recall that $H_0=\PGL_2$) and we can choose the embedding $\nu$ so that 
$$\frac{|\CP_{\hat{G},1,\hat{\rho}}(\phi,\varphi)|^2}{\pair{\phi,\phi}}``=" \frac{\Pi_{k\in I}L(k/2+1,\Pi,\rho_k)}{L(1,\Pi,Ad)^2}.$$
\end{conj}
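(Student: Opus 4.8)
The plan is to establish Conjecture~\ref{main conj dual} through the relative trace formula comparison of \S\ref{sec RTF}, specialized to the quadruple $(\hat G,\hat G,\hat\rho,1)$ and its BZSV dual $(G,H_0,0,\iota)$. On the $\hat G$-side one forms the distribution $I(f)=\CP_{\iota'}\big(\CP_{\hat G,1,\hat\rho,1}(K_f)\big)$, where $K_f$ is the kernel of $f\in C_c^\infty(\hat G(\BA))$, $\CP_{\hat G,1,\hat\rho}$ is the theta period in the statement, and $\CP_{\iota'}$ is the degenerate (Bessel or Fourier--Jacobi) period on $\hat G(\BA)$ attached to the nilpotent orbit of $\hat\Fg$ that is Barbasch--Vogan dual to $\CO_\iota$; the other side is the Kuznetsov trace formula $J(f')$ for $H'=\SL_2$, recalled in Section~3, whose discrete spectral side is given by \eqref{spectral expansion 1} thanks to Lapid--Mao's conjecture for the $\SL_2$ Whittaker period (which is within reach). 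Granting the comparison $I(f)=J(f')$ for matching $f\leftrightarrow f'$ (Conjecture~\ref{conj RTF}) together with the spectral decomposition of $I(f)$, one reads off the identity of Conjecture~\ref{conj rtf period 1}; as its right-hand side involves only packets $\Pi$ of $\SL_2$, this already yields the vanishing assertion of Conjecture~\ref{main conj dual}. Squaring that identity and dividing by Conjecture~\ref{conj Whittaker general} applied to $\CP_{\iota'}$ --- the Barbasch--Vogan dual orbit has no odd symplectic summand, so its numerator $L$-value is trivial --- isolates $|\CP_{\hat G,1,\hat\rho}(\phi,\varphi)|^2/\pair{\phi,\phi}$, and a bookkeeping of the $Sym^k$-decompositions recorded in \S\ref{sec DMVexplicit} then produces the ratio $\prod_{k\in I}L(k/2+1,\Pi,\rho_k)/L(1,\Pi,Ad)^2$ predicted by the statement.

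The first concrete step, and the one actually carried out in this paper, is the local foundation of the comparison in the $p$-adic case: the fundamental lemma and smooth transfer between the orbital integrals of $I(f)$ and those of the $\SL_2$-Kuznetsov formula. Here I would proceed as follows. First, work out the relevant invariant theory: because the reductive part $H_0=\PGL_2$ has rank one, the matching of orbits is governed by a single rational invariant, so only two one-variable families of orbital integrals need be compared. Second, unfold the Weil-representation integral defining the $\hat\rho$-theta period so as to rewrite the $\hat G$-side orbital integral as an explicit $p$-adic integral of the same shape as a Kuznetsov orbital integral on $\SL_2$. Third, prove the unramified identity for the units of the Hecke algebras by computing both sides via the Satake transform and Macdonald's formula, and upgrade it to smooth transfer for all test functions by the usual density and partition-of-unity arguments, following the pattern of the already known cases \cite{FJ, JR, MR4, Z}. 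This is the content of Sections~3--5.

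What is not done here, and is the genuinely hard part, is the global side: proving the term-by-term equality of the geometric expansions of $I(f)$ and $J(f')$ after smooth transfer, which for these large groups requires regularizing and truncating the non-absolutely-convergent theta and degenerate-Whittaker periods and then carrying out the full spectral decomposition of both distributions. Beyond that, extracting the pointwise Ichino--Ikeda-type identity of Conjecture~\ref{main conj dual} needs a local multiplicity-one statement for the theta--degenerate-Whittaker functional at the ramified places, together with the still-conjectural inputs Conjecture~\ref{conj Whittaker general} and the Lapid--Mao conjecture in the generality used above. I therefore expect the global geometric-plus-spectral comparison to be the principal obstacle; the present paper supplies the local harmonic-analytic input that turns Conjecture~\ref{conj RTF}, and hence this route to Conjecture~\ref{main conj dual}, into a well-posed problem.
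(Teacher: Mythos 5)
This statement is a conjecture, not a theorem, and the paper never claims to prove it; you correctly recognize this and correctly reconstruct the paper's strategy: Conjecture~\ref{main conj dual} is to be attacked via the relative trace formula comparison of Conjecture~\ref{conj RTF} (with one side the $\SL_2$ Kuznetsov formula, the other the $(\CP_{\hat G,1,\hat\rho},\CP_{\iota'})$-relative trace formula), combined with Conjecture~\ref{conj Whittaker general} for the degenerate Whittaker period $\CP_{\iota'}$ and with Lapid--Mao for the $\SL_2$ Whittaker period, and what the paper actually delivers is only the local input (fundamental lemma and smooth transfer, Sections~3--5). Your high-level plan therefore matches the paper's.

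Where you diverge from the paper is in your description of \emph{how} the fundamental lemma and smooth transfer are established, and the divergence is substantive enough to flag. You propose computing the unramified identity ``via the Satake transform and Macdonald's formula'' and then upgrading to smooth transfer ``by the usual density and partition-of-unity arguments.'' That is not what happens here. The paper does not pass through Satake or Macdonald at all: following Mao--Rallis \cite{MR}, both sides are unfolded into completely explicit $p$-adic orbital integrals over the rational invariant $a\in F^\times$ (with unit elements this gives $I(a,\varphi_{0})=|a|^{(n-1)/2}\int_{\CO_F^\times}\psi((x+x^{-1})/a)\,dx$ for $|a|<1$, etc.), and the fundamental lemma is a direct match of these closed-form expressions against the equally explicit $\SL_2$ Kuznetsov integrals in Proposition~\ref{orbital integral SL(2)}. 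Likewise, smooth transfer is not obtained by partition of unity; it is obtained by introducing the function space $C_{OI}^\infty(F^\times,c_+,c_-)$ (Definition~\ref{space of orbital integral}) and proving that \emph{both} families of normalized orbital integrals exhaust exactly this space (Proposition~\ref{orbital integral SL(2)}(3), Theorem~\ref{orbital integral Model 1-4}(3), Theorem~\ref{orbital integral Model 5-6}(3)), with the constants $c_\pm$ matching the singular terms $I^\pm$, $J^\pm$; smooth transfer then falls out of this characterization rather than from a covering argument. The hard technical content you should be aware of is the singular-term analysis near $a=0$ (the germ expansion built from $\int_{\pm 1+\varpi^m\CO_F}\psi((x+x^{-1})/a)\,dx$) and, for Models 5 and 6, the unfolding of the Fourier--Jacobi theta series \eqref{theta 1}--\eqref{theta 12} before one even has an $a$-parametrized orbital integral. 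The rest of your assessment --- that the global geometric/spectral comparison, the regularization issues, and the remaining conjectural inputs are the genuine obstacles --- is accurate and consistent with what the paper claims.
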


In the following three sections, we will study the relative trace formula comparison in Conjecture \ref{conj RTF} for the quadruple $(\hat{G},\hat{G},\rho,1)$ where $(\hat{G},\hat{\rho})$ runs over models in Talbe \ref{fig:2}, and we will prove the fundamental lemma and smooth transfer in the p-adic case. This serves as strong evidence for Conjecture \ref{main conj dual}.

\begin{rmk}
The functorial lifting from $\SL_2$ to $\hat{G}$ where $G$ runs over the groups in Table \ref{fig:1} can also be proved using theta correspondence method based on the minimal representations constructed by Kazhdan-Savin (\cite{KS}, \cite{GRS1}, \cite{MS}).

\end{rmk}

\section{The relative trace formula on $\SL_2$}
As we explained in Section \ref{sec RTF} when we consider the relative trace formula for quadruples $(\hat{G},\hat{G},\hat\rho,1)$ defined in the previous section, one side of the relative trace formula is just the Kuznetsov trace formula for $H'=\SL_2$. To be specific, let $G'=H'=\SL_2$ and $N'$ be the upper triangular unipotent subgroup of $G'$ and we define the character $\xi'$ on $N'(k)\back N'(\BA)$ to be $\xi'(\begin{pmatrix}1&x\\0&1\end{pmatrix})=\psi(x)$. For $f'\in \CS(G'(\BA))$, let 
$$K_{f'}(x,y)=\sum_{\gamma\in G'(k)}f(x^{-1}\gamma y)$$ 
be the kernel function and define
$$J(f')=\int_{N'(k)\back N'(\BA)}\int_{N'(k)\back N'(\BA)}K_{f'}(n_1,n_2)\psi(n_1)^{-1}\psi(n_2)\ dn_1\ dn_2.$$
Here we can identify $N'(\BA)$ with $\BA$, and the measure on it comes from the $\psi$-self dual Haar measure on $\BA$ (thus the total volume of $\BA/k$ equal to 1).

If $f'=\otimes_{v\in |k|} f_v'$, for $a\in k_{v}^{\times}$, define
$$J_v(a,f_v')=\int_{k_v}\int_{k_v} f_v'(\begin{pmatrix}1&x\\0&1\end{pmatrix} \begin{pmatrix}0&-a^{-1}\\a&0  \end{pmatrix} \begin{pmatrix}1&y\\0&1\end{pmatrix})\psi(x+y)\ dx\ dy,$$
$$J_{v}^{+}(f_v')=\int_{k_v}f_v'(\begin{pmatrix}1&x\\0&1\end{pmatrix})\psi(x)\ dx,\;J_{v}^{-}(f_v')=\int_{k_v}f_v'(\begin{pmatrix}-1&-x\\0&-1\end{pmatrix})\psi(x)\ dx.$$
A standard unfolding process (see \cite{JY}) gives the identity
\begin{equation}\label{unfolding Whittaker}
J(f')=\Pi_{v\in k^\times} J_{v}^{+}(f_v')+\Pi_{v\in k^\times} J_{v}^{-}(f_v')+\sum_{a\in k^\times}\Pi_{v\in k^\times} J_v(a_v,f_v').
\end{equation}

For the rest of this section, we recall the properties of the orbital integral $J_v(a_v,f_v')$ in the p-adic case. To simplify the notation, we will use $F$ to replace the local field $k_v$, and we will get rid of all the subscript $v$. We also assume that $F$ is non-archimedean. Let $\CO_F$ be the ring of integers, $\varpi$ be a uniformizer, $p$ be the residue characteristic and $q=|\varpi|^{-1}$. We also fix an additive character $\psi$ of $F$. We say $\psi$ is unramified if it is trivial on $\CO_F$ but not trivial on $\varpi^{-1}\CO_F$. We choose the self-dual Haar measure $da$ on $F$ (with respect to $\psi$). If $\psi$ is unramified, then the volume of $\CO_F$ under $da$ equals 1.

\begin{defn}\label{space of orbital integral}
For $c\in \BC$, define $C_{OI}^{\infty}(F^\times,c_+,c_-)$ to be the space of locally constant complex valued functions $f$ on $F^\times$ such that
\begin{enumerate}
\item The support of $f$ is bounded on $F$.
\item There exists a neighborhood $\omega$ of $0$ in $F$ such that
$$f(a)=c_+\cdot \int_{1+\varpi^m\CO_{F}}\psi(\frac{x+x^{-1}}{a})\ dx+c_-\cdot \int_{-1+\varpi^m\CO_{F}}\psi(\frac{x+x^{-1}}{a})\ dx,\;\forall a\in \omega-\{0\}.$$
\end{enumerate}
Here we choose $m>0$ large enough so that $|2|>|\varpi^m|$ (note that the two integrals are independent of the choice of $m$ when $a$ is close to 0). We let $C_{OI}^{\infty}(F^{\times})=\cup_{c_i\in \BC} C_{OI}^{\infty}(F^\times,c_1,c_2)$.
\end{defn}

\begin{rmk}
Here $OI$ stands for orbital integral. This space is the same as the one in \cite{S} for the orbital integrals of rank one spherical varieties. Also when $c_1=c_2=0$, the space $C_{OI}^{\infty}(F^\times,c_1,c_2)$ is just $C_{c}^{\infty}(F^\times)$.
\end{rmk}

The following proposition about the orbital integrals of the Kuznetsov relative trace formula for $\SL_2$ is well known.

\begin{prop}\label{orbital integral SL(2)}
\begin{enumerate}
\item For $F\in C_{c}^{\infty}(F^\times)$, there exists $f'\in C_{c}^{\infty}(G'(F))$ such that
$$J(a,f')=F(a),\;\forall a\in F^\times.$$
\item If $f'$ is the characteristic function of $G'(\CO_F)$ and $\psi$ is unramified, then $J^+(f')=J^-(f')=1$ and
$$J(a,f')=0\;\text{if}\;|a|>1;\;J(a,f')=1\;\text{if}\;|a|=1;\;J(a,f')=|a|^{-1}\cdot \int_{\CO_{F}^{\times}}\psi(\frac{x+x^{-1}}{a})\ dx\;\text{if}\;|a|<1.$$
\item For $f'\in C_{c}^{\infty}(G'(F))$, the function
$$a\in F^\times\mapsto |a|\cdot J(a,f')$$
belongs to the space $C_{OI}^{\infty}(F^\times,c_+,c_-)$ for $c_+=J^+(f')$ and $c_-=J^-(f')$.
\end{enumerate}
\end{prop}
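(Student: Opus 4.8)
The plan is to exploit the Bruhat decomposition of $\SL_2$. Write $w_a=\mm{0}{-a^{-1}}{a}{0}$; the big cell $\{g\in\SL_2(F):g_{21}\ne 0\}$ is in bijection with $N'(F)\times F^\times\times N'(F)$ via $(x,a,y)\mapsto \mm{1}{x}{0}{1}w_a\mm{1}{y}{0}{1}=\mm{ax}{axy-a^{-1}}{a}{ay}$, with inverse $a=g_{21}$, $x=g_{11}/g_{21}$, $y=g_{22}/g_{21}$; under this identification $J(a,f')$ is the partial Fourier transform of $f'$ along the $N'\times N'$–orbit through $w_a$. For part (1) I would fix $\phi_0\in C_{c}^{\infty}(F)$ with $\int_F\phi_0(x)\psi(x)\,dx=1$, define $f'\big(\mm{1}{x}{0}{1}w_a\mm{1}{y}{0}{1}\big)=\phi_0(x)\phi_0(y)F(a)$ on the big cell and $f'=0$ elsewhere; since $\mathrm{supp}(F)$ is bounded away from $0$ this extends to an element of $C_{c}^{\infty}(\SL_2(F))$, and a one-line computation gives $J(a,f')=F(a)\big(\int_F\phi_0\psi\big)^2=F(a)$. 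For part (2) I would simply impose integrality of the four entries of $\mm{ax}{axy-a^{-1}}{a}{ay}$: if $|a|>1$ this fails identically so $J(a,f')=0$; if $|a|=1$ it reduces to $x,y\in\CO_F$, giving $J(a,f')=1$; if $|a|<1$, setting $u=ax$, $v=ay$ the conditions become $u,v\in\CO_F$ and $uv\equiv 1\pmod{a\CO_F}$, which forces $u\in\CO_F^\times$ and $v\in u^{-1}+a\CO_F$, and carrying out the $v$–integral (using $\int_{\CO_F}\psi=1$) and renaming $u$ as $x$ produces the stated Kloosterman-type integral.

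Part (3) is the substantive one. Boundedness of the support on $F$ is immediate since $J(a,f')=0$ unless $a=g_{21}$ for some $g\in\mathrm{supp}(f')$, and local constancy on $F^\times$ follows from the fact that for $a$ in a fixed compact subset of $F^\times$ the $(x,y)$–domain is uniformly bounded, together with local constancy of $f'$. The real content is the germ at $a=0$. Here my plan is: fix $N$ so that $f'$ is bi-invariant under the principal congruence subgroup $K_N$ of level $\varpi^N$; substitute $u=ax$, $v=ay$ so that $|a|J(a,f')=|a|^{-1}\iint f'\big(\mm{u}{a^{-1}(uv-1)}{a}{v}\big)\psi\big(\tfrac{u+v}{a}\big)\,du\,dv$; observe that for small $|a|$ the integrand is supported where $|uv-1|\le C|a|$, hence where $u,v$ have bounded valuation; write $v=u^{-1}(1+aw)$ and use
\[
\begin{pmatrix} u & w\\ a & u^{-1}(1+aw)\end{pmatrix}=\begin{pmatrix} 1 & 0\\ a/u & 1\end{pmatrix}\begin{pmatrix} u & w\\ 0 & u^{-1}\end{pmatrix},
\]
with $\mm{1}{0}{a/u}{1}\in K_N$ for small $|a|$, to replace $f'$ by its value on the upper-triangular matrix on all of $F^\times\times F$. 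After integrating out $w$ this writes, for all sufficiently small $|a|$,
\[
|a|J(a,f')=\int_{F^\times}\Phi(u)\,\psi\!\left(\frac{u+u^{-1}}{a}\right)du,\qquad \Phi(u)=|u|^{-1}\int_F f'\!\begin{pmatrix} u & w\\ 0 & u^{-1}\end{pmatrix}\psi(u^{-1}w)\,dw,
\]
with $\Phi\in C_{c}^{\infty}(F^\times)$. Then I would decompose $\Phi$ into finitely many multiples of $\mathbf{1}_{c+\varpi^M\CO_F}$; for $c\ne\pm 1$ the phase $x+x^{-1}$ has derivative $1-x^{-2}$ of constant nonzero absolute value on $c+\varpi^M\CO_F$ (after enlarging $M$ so this coset avoids $\pm 1$), hence maps the coset bijectively onto a coset of a fractional ideal, so $\int_{c+\varpi^M\CO_F}\psi\big(\tfrac{x+x^{-1}}{a}\big)\,dx=0$ for small $|a|$; the same vanishing lets one replace the integrals over $\pm 1+\varpi^M\CO_F$ by those over $\pm 1+\varpi^m\CO_F$ for the fixed $m$ with $|2|>|\varpi^m|$. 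What survives is exactly the two main terms with coefficients $\Phi(1)$ and $\Phi(-1)$; and $\Phi(1)=\int_F f'\big(\mm{1}{w}{0}{1}\big)\psi(w)\,dw=J^+(f')$, while $w\mapsto-w$ identifies $\Phi(-1)$ with $\int_F f'\big(\mm{-1}{-w}{0}{-1}\big)\psi(w)\,dw=J^-(f')$, which gives the claim.

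The hard part is the germ analysis in (3): the bookkeeping needed to control the $(u,v)$–support so that the congruence-subgroup replacement of $f'$ by its upper-triangular part is legitimate on all of $F^\times\times F$ (and not merely on the a priori domain of integration), the stationary-phase vanishing of the $\psi$–integrals over cosets on which $1-x^{-2}$ has constant valuation, and verifying that the outcome is independent of the auxiliary parameters $M$ and $N$. By comparison, parts (1) and (2) are routine once the Bruhat coordinates are in hand.
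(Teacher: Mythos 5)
The paper does not actually give a proof of this proposition; it is simply asserted to be ``well known,'' so there is nothing to compare against. Your proof is a correct and essentially complete writeup of the standard argument, via Bruhat coordinates $(x,a,y)\mapsto n_1(x)w_a n_2(y)$ with $a=g_{21}$, $x=g_{11}/g_{21}$, $y=g_{22}/g_{21}$. Parts (1) and (2) are routine and your computations check out (including the change of variables $u=ax$, $v=ay$ and the observation that $uv\equiv 1\bmod a\CO_F$ with $|a|<1$ forces $u\in\CO_F^\times$). For part (3), the key points are all there and correct: the congruence-subgroup factorization
$\bigl(\begin{smallmatrix}u&w\\a&u^{-1}(1+aw)\end{smallmatrix}\bigr)=\bigl(\begin{smallmatrix}1&0\\a/u&1\end{smallmatrix}\bigr)\bigl(\begin{smallmatrix}u&w\\0&u^{-1}\end{smallmatrix}\bigr)$, which is valid to apply uniformly because both the original integrand and the upper-triangular substitute vanish outside a region where $|u|$ is bounded below by a constant independent of $a$ (determined by $\mathrm{supp}\, f'$); the reduction to a $1$--variable integral against $\Phi\in C_c^\infty(F^\times)$ with $\Phi(\pm 1)=J^{\pm}(f')$; and the stationary-phase vanishing $\int_{c+\varpi^M\CO_F}\psi\bigl(\tfrac{x+x^{-1}}{a}\bigr)\,dx=0$ for small $|a|$ when $c\ne \pm 1$, which uses that $|1-x^{-2}|$ is a nonzero constant on the coset for $M$ large (and is also what makes the germ expression independent of $m$, as implicitly required by Definition~\ref{space of orbital integral}). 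One small bookkeeping point worth making explicit if you write this up: when splitting $\pm 1+\varpi^m\CO_F$ into $\pm 1+\varpi^M\CO_F$ and finitely many cosets $c+\varpi^M\CO_F$ with $c\ne\pm 1$, the condition $|2|>|\varpi^m|$ ensures $1$ and $-1$ are not in the same coset, and for the ``near'' cosets $c$ in the difference one still has $|1-c^{-2}|\ge |2\varpi^{M-1}|>|\varpi^{2M}|$, so the derivative has constant valuation and the vanishing argument applies; this is the only place where the normalization of $m$ interacts with residue characteristic $2$.
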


\section{The relative trace formula for Model 1-4}
In this section, we will write down the other side of the relative trace formula for Models 1-4. We will also unfold the relative trace formula and formulate the conjecture of fundamental lemma and smooth transfer. Then we will prove the fundamental lemma as well as the smooth transfer in the p-adic case.

\subsection{The relative trace formula}

In this case, the relative trace formula in Section \ref{sec RTF} has already been formulated in a previous paper of the first author and S. Rallis \cite{MR} for Models 1, 3, and 4. The trace formula for Model 2 is similar. Let's recall the relative trace formula from \cite{MR}. We first need to specify a parabolic subgroup $Q=LN$ of $\hat{G}$. For Model 1 (resp. Model 3, Model 4), $Q$ is the standard maximal parabolic subgroup whose Levi factor does not contain the simple root $\alpha_3$ (resp. $\alpha_6$,  $\alpha_7$). For Model 2, $Q$ is the product of the standard Borel subgroup of $\GL_2$ with the standard maximal parabolic subgroup of $\GSO_8$ whose Levi factor does not contain the simple root $\alpha_4$.

\begin{rmk}
The Richardson nilpotent orbit associated to $Q$ is just the dual of the nilpotent orbit $\CO_\iota$ of $\Fg$.
\end{rmk}

In all four cases, $N$ can be identified with a Jordan algebra $\CJ$ of degree 3. More specifically, for Model 1 (resp. Model 2, Model 3, Model 4), $\CJ$ is $Mat_{3\times 3}$ (resp. $Asym_{4\times 4}\oplus Mat_{1\times 1}$, $Asym_{6\times 6}$, $3\times 3$ self-adjoint octonionic matrices). Here we use $Mat_{n\times n}$ (resp. $Asym_{n\times n}$) to denote the $n\times n$ matrices (resp. $n\times n$ skew symmetric matrices). 

In all these cases, we can define the trace map and norm map on $\CJ$ (denoted by $\tr$ and $N_\CJ$). As in \cite{MR}, for $A\in \CJ$ we can also define $A\cdot A,\;A\times A\in \CJ$ such that $(A\times A)\cdot A=N_\CJ(A)$. We define the generic character on $N(k)\back N(\BA)$ to be
$$\xi_N(A)=\psi(\tr(A)),\;A\in \CJ.$$

For $f\in \CS(\hat{G}(\BA)/Z_{\hat{G}}(\BA))$, let $K_f(x,y)$ be the kernel function as in the previous section. For $\varphi\in \CS(Y(\BA))$, we define
$$I(f,\varphi)=\int_{N(k)\back N(\BA)}\int_{\hat{G}(k)Z_{\hat{G}}(\BA)\back \hat{G}(\BA)} K_f(g,n) \Theta_{\psi}^{\varphi}(g)\xi_N(n)\ dg\ dn.$$
Here we can identify $N(\BA)$ with $\BA^n$ with $n=\dim(N)$ and the measure of $\BA$ induces a Haar measure of $N(\BA)$ with $\vol(N(k)\back N(\BA))=1$. This is the other side of the relative trace formula in Conjecture \ref{conj rtf period 1} for the current case.

As in \cite{MR}, we can identify the maximal isotropic space $Y$ with $Y=Mat_{1\times 1}\oplus \CJ$. In \cite{MR}, they proved the unfolding process of $I(f,\varphi)$ for Models 1, 3, and 4. A similar argument also works for the case for Model 2, and we will skip the details here. We have
$$I(f,\varphi)=\Pi_{v\in |k|}I_{v}^{+}(f_v\ast \varphi_v)+\Pi_{v\in |k|}I_{v}^{+}(f_v\ast \varphi_v)+\sum_{a\in k^\times} I_v(f_v\ast\varphi_v),\;f=\otimes f_v,\varphi=\otimes \varphi_v$$
where ($I$ is the identity element in the Jordan algebra)
$$f_v\ast \varphi_v=\int_{\hat{G}(k_v)/Z_{\hat{G}}(k_v)} f_v(g^{-1}) \Omega_{\psi}(g)\varphi_v\ dg,\;I_{v}^{\pm}(f_v\ast \varphi_v)=f_v\ast\varphi_v(0,\pm I),$$
$$I_v(a,f_v\ast\varphi_v)=\int_{\CJ(k_v)} f_v\ast \varphi_v(a,A) \psi(\frac{\tr(A)-N_\CJ(A)}{a})\ dA.$$

Now, we can state the fundamental lemma and smooth transfer in this case. 

\begin{conj}\label{conj case 1}
With the notation above, let $n=\dim(\CJ)$. The following holds.
\begin{enumerate}
\item (Fundamental lemma) Assume that $v$ is non-archimedean and $\psi_v$ is unramified over $v$. Let $\CO_v$ be the ring of integers of $k_v$ and let $f_{0,v}'$ (resp. $f_{0,v},\;\varphi_{0,v}$) be the characteristic function of $G'(\CO_{v})$ (resp. $\hat{G}(\CO_v)Z_{\hat{G}}(F)$, $Y(\CO_v)$). Then (note that $f_{0,v}\ast \varphi_{0,v}=\varphi_{0,v}$)
$$I_{v}^{+}(\varphi_{0,v})=J_{v}^{+}(f_{0,v}'),\;I_{v}^{-}(\varphi_{0,v})=J_{v}^{-}(f_{0,v}'),\;I_v(a,\varphi_{0,v})=|a|^{(n+1)/2}J_v(a,f_{0,v}'),\;\forall a\in k_{v}^{\times}.$$
\item (Smooth transfer) For any $f_v'\in \CS(G'(k_v))$ (resp. $f_v\in \CS(\hat{G}(k_v)/Z_{\hat{G}}(k_v))$ and $\varphi_v\in \CS(Y(k_v))$), there exists $f_v\in \CS(\hat{G}(k_v)/Z_{\hat{G}}(k_v))$ and $\varphi_v\in \CS(Y(k_v))$ (resp. $f_v'\in \CS(G'(k_v))$) such that
$$I_{v}^{+}(f_v\ast \varphi_v)=J_{v}^{+}(f_v'),\;I_{v}^{-}(f_v\ast \varphi_v)=J_{v}^{-}(f_v'),\;I_v(a,f_v\ast\varphi_v)=|a|^{(n+1)/2}J_v(a,f_v'),\;\forall a\in k_{v}^{\times}.$$
\end{enumerate}
\end{conj}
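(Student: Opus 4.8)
The strategy is to reduce both the fundamental lemma and smooth transfer to statements about the convolution $f_v \ast \varphi_v$ viewed as a function on the Jordan-algebra-parametrized space $Y(k_v) = Mat_{1\times 1} \oplus \CJ$, and then to match these against the space $C_{OI}^\infty(F^\times, c_+, c_-)$ of Proposition~\ref{orbital integral SL(2)}. The key observation is that the three terms $I_v^+$, $I_v^-$, $I_v(a,-)$ on the $\hat G$-side and $J_v^+$, $J_v^-$, $J_v(a,-)$ on the $\SL_2$-side are the ``boundary'' and ``generic'' orbital integrals of two relative trace formulas whose underlying geometry is that of a rank-one spherical variety; by Proposition~\ref{orbital integral SL(2)}(3) the function $a \mapsto |a| J_v(a,f_v')$ ranges over exactly $C_{OI}^\infty(F^\times, J_v^+(f_v'), J_v^-(f_v'))$ as $f_v'$ ranges over $\CS(G'(k_v))$, and conversely (by part (1)) every element of $C_c^\infty(F^\times)$ is achieved. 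So the entire content of smooth transfer is: \emph{the function $a \mapsto |a|^{(n+1)/2} \cdot |a|^{-1} \cdot \big(\text{suitable normalization of } I_v(a, f_v\ast\varphi_v)\big)$ lies in $C_{OI}^\infty(F^\times, I_v^+(f_v\ast\varphi_v), I_v^-(f_v\ast\varphi_v))$, and all such data arise.}

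First I would carry out the local analysis of $g(a,-) := |a|^{(n+1)/2} I_v(a, f_v\ast\varphi_v)$. Writing $\Phi = f_v\ast\varphi_v \in \CS(Y(k_v)/Z) = \CS(k_v \oplus \CJ(k_v))$ — or rather its image under the $\hat G$-action; the point (as in \cite{MR}) is that $f_v\ast\varphi_v$ is an arbitrary Schwartz function on $Y(k_v)$ subject only to the equivariance forced by $Z_{\hat G}$ — the integral
$$
I_v(a, \Phi) = \int_{\CJ(k_v)} \Phi(a, A)\, \psi\!\Big(\frac{\tr(A) - N_\CJ(A)}{a}\Big)\, dA
$$
is a partial Fourier/oscillatory transform along the cubic hypersurface $\tr(A) - N_\CJ(A)$. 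The behavior as $|a|\to\infty$ gives the ``split'' boundary terms $I_v^\pm(\Phi) = \Phi(0, \pm I)$ (this is where the identity $(A\times A)\cdot A = N_\CJ(A)$ and the structure of the Jordan algebra enter: the critical points of $A \mapsto \tr(A) - N_\CJ(A)$ are the idempotents, and the nondegenerate ones are $\pm I$, contributing the $c_\pm$ terms), while for $|a|$ small one gets rapid decay plus the explicit main term built from $\int \psi((x+x^{-1})/a)\,dx$-type integrals. I expect the germ expansion near $a=0$ to produce precisely the two leading terms in Definition~\ref{space of orbital integral}(2) after incorporating the Weil-index/Gauss-sum constants coming from the Jordan algebra; this is the heart of the local matching and is exactly parallel to the computations in \cite{S, MR}.

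Second, having established that $a \mapsto |a|^{(n+1)/2} I_v(a,f_v\ast\varphi_v)$ lies in $C_{OI}^\infty(F^\times, I_v^+, I_v^-)$ with the boundary constants as indicated, I would invoke Proposition~\ref{orbital integral SL(2)}: part (3) says the $\SL_2$-side produces all of $C_{OI}^\infty(F^\times, c_+, c_-)$ with $(c_+,c_-) = (J_v^+(f_v'), J_v^-(f_v'))$, and part (1) handles the $C_c^\infty(F^\times)$ part, so matching is possible in both directions provided the \emph{boundary constants} can be matched independently — which requires checking that $(I_v^+(f_v\ast\varphi_v), I_v^-(f_v\ast\varphi_v)) = (\Phi(0,I), \Phi(0,-I))$ can be prescribed independently of the germ of $I_v(a,\Phi)$ near $0$, and that on the $\SL_2$-side $J_v^\pm$ can likewise be prescribed independently of the $C_c^\infty$-part. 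The latter is classical (choose $f_v'$ supported near $N'$ for the $J^\pm$ contribution and away from it for the rest); the former follows by choosing the Schwartz function $\varphi_v$ to be separately supported near $(0, I)$, near $(0,-I)$, and on the locus $a\neq 0$. For the fundamental lemma, one simply takes $\Phi = \varphi_{0,v}$ the characteristic function of $Y(\CO_v)$ and computes both sides by hand: $I_v^\pm(\varphi_{0,v}) = 1$, and for $|a|\le 1$ the integral $\int_{\CJ(\CO_v)} \psi((\tr A - N_\CJ(A))/a)\, dA$ collapses — via a change of variables exploiting $N_\CJ$ being a cubic form and the unramifiedness of $\psi$ — to $\vol$-normalized Kloosterman-type sums matching $|a|^{(n+1)/2}$ times the formula in Proposition~\ref{orbital integral SL(2)}(2); the power $(n+1)/2 = (\dim\CJ + 1)/2 = \dim Y /2$ is exactly the normalization produced by the Weil representation $\Omega_\psi$ on $\CS(Y(\BA))$.

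The main obstacle will be the precise local computation of the germ of $I_v(a, f_v\ast\varphi_v)$ as $a \to 0$, i.e. proving that $|a|^{(n+1)/2} I_v(a, \Phi)$ genuinely lands in $C_{OI}^\infty$ with the \emph{correct} coefficients $c_\pm = \Phi(0, \pm I)$ and no extra terms — this amounts to a stationary-phase/germ analysis of an oscillatory integral over the cubic hypersurface $\{\tr(A) = N_\CJ(A)\}$ in each of the four Jordan algebras, and the nonsplit/anisotropic directions of that hypersurface (where $N_\CJ$ has no zeros) must be shown to contribute only to the Schwartz part. I would handle this uniformly using the degree-3 structure of $\CJ$ and the adjoint/cross-product identities from \cite{MR}, treating the four cases $\CJ \in \{Mat_{3\times3},\, Asym_{4\times4}\oplus k,\, Asym_{6\times6},\, \text{octonionic } 3\times 3\}$ in parallel, with the octonionic case ($E_7$) requiring the most care because of the lack of associativity; a secondary technical point is bounding archimedean-place contributions and the support conditions, which I would address as in \cite{MR, Z}.
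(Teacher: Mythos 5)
Your plan is essentially the paper's own route: the content is reduced to Theorem~\ref{orbital integral Model 1-4}, which asserts that $|a|^{(1-n)/2}I(a,\varphi)$ lies in $C_{OI}^{\infty}(F^\times, I^+(\varphi), I^-(\varphi))$ for $\varphi\in C_c^\infty(Y(F))$ with all of $C_c^\infty(F^\times)$ attained, and the proof there localizes the cubic phase $\tr(A)-N_\CJ(A)$ around its critical points $\pm I$ (the solutions of $A\times A=I$) precisely as you propose — via the partial derivatives $P_\gamma(A)$, which are coordinates of $A\times A - I$, together with the two-variable Gaussian identity \eqref{basic equation 1} in place of your stationary-phase heuristic — and then Proposition~\ref{orbital integral SL(2)} completes the matching. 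Two slips to fix in your exposition: $I^\pm(\Phi)=\Phi(0,\pm I)$ are definitions appearing as the germ coefficients of $I(a,\Phi)$ as $a\to 0$, not $|a|\to\infty$ limits; and the normalization factor should be $|a|^{(1-n)/2}$, not $|a|^{(n+1)/2}\cdot|a|^{-1}=|a|^{(n-1)/2}$.
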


\subsection{The proof of the fundamental lemma and smooth transfer}
In this subsection, we will prove the fundamental lemma and smooth transfer in the p-adic case. To simplify the notation, we will use $F$ to replace the local field $k_v$ and eliminate all the subscript $v$. We also assume that $F$ is non-archimedean. By Proposition \ref{orbital integral SL(2)}, we only need to prove the following theorem.

\begin{thm}\label{orbital integral Model 1-4}
\begin{enumerate}
\item For $F\in C_{c}^{\infty}(F^\times)$, there exists $\varphi\in C_{c}^{\infty}(Y(F))$ such that
$$I(a,\varphi)=F(a),\;\forall a\in F^\times.$$
\item If $\varphi$ is the characteristic function of $Y(\CO_F)$ and $\psi$ is unramified, then $I^+(\varphi)=I^-(\varphi)=1$ and
$$I(a,\varphi)=0\;\text{if}\;|a|>1;\;I(a,\varphi)=1\;\text{if}\;|a|=1;\;I(a,\varphi)=|a|^{(n-1)/2}\cdot \int_{\CO_{F}^{\times}}\psi(\frac{x+x^{-1}}{a})\ dx\;\text{if}\;|a|<1.$$
\item For $\varphi\in C_{c}^{\infty}(Y(F))$, the function
$$a\in F^\times\mapsto |a|^{(1-n)/2}I(a,\varphi)$$
belongs to the space $C_{OI}^{\infty}(F^\times,c_+,c_-)$ for $c_+=I^+(\varphi)$ and $c_-=I^-(\varphi)$.
\end{enumerate}
\end{thm}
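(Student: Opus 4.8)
The plan is to reduce everything to the explicit computation of the twisted orbital integral
\[
I(a,\varphi)=\int_{\CJ(F)}\varphi(a,A)\,\psi\!\Big(\tfrac{\tr(A)-N_\CJ(A)}{a}\Big)\,dA,
\]
where I have already absorbed the convolution by $f$ (in the unramified/fundamental-lemma case $f_0\ast\varphi_0=\varphi_0$, and in the smooth-transfer case one simply works with $\phi:=f\ast\varphi\in\CS(Y(F))$ directly, so WLOG $\varphi\in\CS(Y(F))$). The key structural input is that $Y=Mat_{1\times1}\oplus\CJ$ and the inner integral is over the degree-$3$ Jordan algebra $\CJ$ with its trace form $\tr$ and norm cubic $N_\CJ$; the $a$-variable only enters through the overall scaling $\psi(\bullet/a)$ and through the first coordinate of $\varphi$. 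I will treat the three parts in the order (2), then (3), then (1).

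For part (2), take $\varphi=\mathbf 1_{Y(\CO_F)}$ and $\psi$ unramified, so $I(a,\varphi)=\mathbf 1_{\CO_F}(a)\int_{\CJ(\CO_F)}\psi\big(\tfrac{\tr(A)-N_\CJ(A)}{a}\big)\,dA$. When $|a|>1$ the condition $a\in\CO_F$ fails and we get $0$; when $|a|=1$ the character is trivial on $\CJ(\CO_F)$ and the integral is $\vol(\CJ(\CO_F))=1$. The substantive case is $|a|<1$: I will diagonalize/stratify $\CJ(\CO_F)$ using the structure theory of the degree-$3$ Jordan algebra (a generic $A$ has $\tr(A)$, the "sub-trace" $\tr(A\times A)$, and $N_\CJ(A)$ as its basic invariants, and there is a linear slice isomorphic to the Cartan subalgebra $F^3=\{(x_1,x_2,x_3)\}$ on which $\tr=\sum x_i$ and $N_\CJ=\prod x_i$), push forward the measure, and carry out the Gaussian-type integration in the "off-diagonal" directions, which introduces the factor $|a|^{(n-1)/2}$ from $\dim\CJ-1$ many one-dimensional integrals $\int_{\CO_F}\psi(cx^2/a)\,dx$-type expressions (each contributing $|a|^{1/2}$ up to a unit). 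What remains after this is exactly a one-dimensional integral over $\CO_F^\times$ of $\psi\big(\tfrac{x+x^{-1}}{a}\big)$, matching the $\SL_2$ side in Proposition~\ref{orbital integral SL(2)}(2) after the normalization $|a|^{(n+1)/2}$. I expect the bookkeeping of this Gaussian reduction — tracking the precise power of $|a|$ and checking the residual integral is literally $\int_{\CO_F^\times}\psi((x+x^{-1})/a)\,dx$ and not some twist of it — to be the main obstacle, and it is essentially the place where the paper \cite{MR} of the first author and Rallis does the hard work; I would cite and adapt their computation.

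For part (3), I let $\varphi\in C_c^\infty(Y(F))$ be arbitrary and study the behaviour of $a\mapsto|a|^{(1-n)/2}I(a,\varphi)$ as $|a|\to0$ and on the rest of $F^\times$. Away from $0$ the function is clearly locally constant with bounded support in $F$ (since $\varphi$ has bounded support in its first coordinate, $I(a,\varphi)=0$ once $|a|$ is large). For the behaviour near $0$: writing $\varphi(a,A)$, for $a$ small enough the first coordinate is forced into a fixed neighbourhood of $0$ where $\varphi(a,A)=\varphi(0,A)$ by local constancy, so I may replace $\varphi$ by $\varphi(0,\cdot)\in C_c^\infty(\CJ(F))$; then the same Jordan-algebra stratification and stationary-phase/Gaussian analysis as in part (2) shows that $|a|^{(1-n)/2}I(a,\varphi)$ is, for $a$ near $0$, a linear combination of $\int_{1+\varpi^m\CO_F}\psi\big(\tfrac{x+x^{-1}}{a}\big)\,dx$ and $\int_{-1+\varpi^m\CO_F}\psi\big(\tfrac{x+x^{-1}}{a}\big)\,dx$ (the two critical points $x=\pm1$ of $x+x^{-1}$), with coefficients $c_\pm$ that one identifies with the values $I^\pm(\varphi)=\varphi(0,\pm I)$ by a direct evaluation at the two "diagonal idempotent" points $A=\pm I$. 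This is exactly the defining condition for membership in $C_{OI}^\infty(F^\times,c_+,c_-)$ of Definition~\ref{space of orbital integral}.

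Finally, part (1) is a surjectivity statement and follows formally from (2) and (3) once one knows the orbital-integral map is "rich enough": given $F\in C_c^\infty(F^\times)$, I first use (3) to note that any $\varphi$ produces a function in $C_{OI}^\infty(F^\times)$, and conversely I construct preimages in two steps — first match the singular part near $0$ by choosing the values $\varphi(0,\pm I)$ to be $c_\pm=J^\pm(\cdot)$-compatible (using that the two idempotents $\pm I$ are separated points of $\CJ$ so $\varphi(0,I)$ and $\varphi(0,-I)$ can be prescribed independently by a bump function), and then correct the remaining $C_c^\infty(F^\times)$-discrepancy, which by (3) with $c_\pm=0$ amounts to realizing an arbitrary element of $C_c^\infty(F^\times)$ as $a\mapsto|a|^{(1-n)/2}I(a,\varphi)$ with $\varphi$ supported away from the idempotents; this last point is a standard Fourier-analytic argument (the partial Fourier transform in the $\CJ$-variable turns the twisted orbital integral into an honest translation-and-restriction operator whose image is visibly all of $C_c^\infty(F^\times)$, since we have the full freedom of $\varphi$ in the $A$-directions). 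Combining with Proposition~\ref{orbital integral SL(2)} then gives Conjecture~\ref{conj case 1} for $v$ non-archimedean, i.e. the fundamental lemma (from part (2)) and smooth transfer in both directions (from parts (1) and (3)).
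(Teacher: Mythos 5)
Your overall structure mirrors the paper's, but there are two genuine gaps that prevent this from being a proof.

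For part (1), your surjectivity-via-(2)-and-(3) argument is both circular and unnecessary, and the closing ``partial Fourier transform $\Rightarrow$ translation-and-restriction operator'' claim is not an argument. The paper is right to call (1) trivial: since $F\in C_c^\infty(F^\times)$ has support bounded away from $0$, say $|a|\geq\epsilon$ on $\mathrm{supp}(F)$, one may take $\varphi(a,A)=q^{mn}\,F(a)\,\mathbf 1_{\varpi^m\CJ(\CO_F)}(A)$ with $m$ large enough that $q^{-m}<\epsilon$ (or $<\epsilon\cdot q^{-c}$ with $c$ the conductor of $\psi$). Then $|{\tr(A)-N_\CJ(A)}|/|a|<1$ uniformly on the support, so the character is identically $1$, and $I(a,\varphi)=F(a)$ exactly. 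No appeal to (2) or (3) is needed or wanted.

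For part (3), the phrase ``stationary-phase/Gaussian analysis'' is doing the entire job, and in the $p$-adic setting that is not an argument --- there is no Taylor expansion and no Morse lemma; one must exhibit an explicit linear variable in which the phase is nondegenerate and invoke exact cancellation. The paper's key technical step, which your proposal does not replicate, is: write $A=\sum_{\gamma\in\Pi_0'}x_\gamma$ in the root-space coordinates of the Siegel nilradical, and observe that for each $\gamma$ one has $\tr(A)-N_\CJ(A)=x_\gamma P_\gamma(A)+Q_\gamma(A)$ where $P_\gamma(A)$ is precisely the $\gamma$-coordinate of $A\times A-I$. If the support of $\varphi(0,\cdot)$ avoids $\pm I$, one shrinks the ball so that $A\times A\neq I$ uniformly, hence some $|P_\gamma(A)|>C>0$ uniformly, and the one-dimensional $x_\gamma$-integral kills $I(a,\varphi)$ once $|a|$ is small --- this is what puts those contributions in $C_c^\infty(F^\times)$. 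Only then does one specialize to the characteristic functions of small boxes around $\pm I$ and run the \cite{MR}-type computation with the exact identity $\int_{\varpi^m\CO_F}\int_{\varpi^m\CO_F}\psi(bxy)\,dx\,dy=|b|^{-1}$ for $|b|>q^{2m}$ to produce the factor $|a|^{(n-1)/2}$ and the residual integral $\int_{1+\varpi^m\CO_F}\psi((x+x^{-1})/a)\,dx$. Also note that your ``Cartan slice / Weyl integration'' description of the reduction in part (2) is not what \cite{MR} does: they work in the coordinates $x_\gamma$ and do explicit iterated integration in pairs of off-diagonal variables, not a diagonalization and push-forward of measure; since you ultimately defer to \cite{MR} this is not fatal, but the sketch should match the citation.
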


The first part of the theorem is trivial. For the second part, the cases for Models 1, 3, and 4 have already been proved in \cite{MR}. The exact same argument can prove the case for Model 2. It remains to verify the last part of the theorem. Without loss of generality, we may assume that $\psi$ is unramified. If $\{0\}\times \CJ$ does not intersect with the support of $\varphi$, it is clear that $I(a,\varphi)\in C_{c}^{\infty}(F^\times)$. Hence it is enough to consider the case when $\varphi$ is the product of the characteristic function of $\varpi^m\CO_F$ and the characteristic function of $A_0+\varpi^m\CJ(\CO_F)$ for some $A_0\in \CJ$ and $m>0$ large. 

We first show that if $\pm I$ does not belong to the support $A_0+\varpi^m\CJ(\CO_F)$, then the function $I(a,\varphi)$ belongs to $C_{c}^{\infty}(F^\times)$. In fact, in this case, by choosing $m$ large enough we may assume that $A\times A\neq I$ for all $A\in A_0+\varpi^m\CJ(\CO_F)$. With the same notation as in Section 2 of \cite{MR}, we can write $A$ as $\Pi_{\gamma\in \Pi_0'} x_\gamma$ where $\Pi'_0$ is a set of roots defined in \cite{MR}.

For a fixed $\gamma\in \Pi_0'$, we can write $\tr(A)-N_\CJ(A)$ as $x_\gamma P_\gamma(A)+Q_\gamma(A)$ where $P_\gamma(A)$ and $Q_\gamma(A)$ are polynomials in $\{x_{\gamma'}|\;\gamma'\neq \gamma\}$. It is easy to see that $P_\gamma(A)$ is just the $\gamma$-coordinate of $A\times A-I$. Since $A\times A\neq I$ for all $A\in A_0+\varpi^m\CJ(\CO_F)$, there exists $\gamma\in \Pi_0'$ and $C>0$ such that $|P_\gamma(A)|>C$ for all $A\in A_0+\varpi^m\CJ(\CO_F)$. This implies that for $a$ small enough, for any $x_{\gamma'}\in F$ with $\gamma'\not=\gamma$, the integration over the variable $x_\gamma$ will be zero; thus the orbital integral $I(a,\varphi)$ is equal to 0. This proves that the function $I(a,\varphi)$ belongs to $C_{c}^{\infty}(F^\times)$.

It remains to consider the case when $\varphi$ is the product of the characteristic function of $\varpi^m \CO_F$ and the characteristic function of $I+\varpi^m\CJ(\CO_F)$ or $-I+\varpi^m\CJ(\CO_F)$. The argument for both cases is the same, so we assume that $\varphi$ is the product of the characteristic function of $\varpi^m \CO_F$ and the characteristic function of $I+\varpi^m\CJ(\CO_F)$. We need an elementary identity whose proof is trivial.
\begin{equation}\label{basic equation 1}
\int_{\varpi^m\CO_F}\int_{\varpi^m\CO_F}\psi(bxy)\ dx\ dy=|b|^{-1},\;\forall |b|>q^{2m}.
\end{equation}
Then by the same argument as in the proof of \cite[Theorem 2]{MR} except that we replace the identity in Lemma 4 of loc. cit. by the identity above \footnote{Note that on Page 334 of loc. cit. after the change of variables $x_{\gamma_1}\rightarrow x_{\gamma_1}+(1-C_1)/x_{\alpha+\beta},\;x_{\delta_1}\rightarrow x_{\delta_1}+(1-D_1)/x_{\alpha+\beta}$ the integration domain of $x_{\gamma_1}$ (resp. $x_{\delta_1}$) changes from $1+\varpi^m\CO_F$ to $\varpi^m\CO_F$.},  we can show that when $a$ is close to 0, the orbital integral $I(a,\varphi)$ is equal to 
$$|a|^{(n-1)/2}\cdot \int_{1+\varpi^m\CO_{F}}\psi(\frac{x+x^{-1}}{a})\ dx=|a|^{(n-1)/2}\cdot \int_{1+\varpi\CO_{F}}\psi(\frac{x+x^{-1}}{a})\ dx.$$
This finishes the proof of the theorem.

\section{The relative trace formula for Model 5 and 6}
In this section, we will write down the other side of the relative trace formula for Models 5 and 6. We will also unfold the relative trace formula and formulate the conjecture of fundamental lemma and smooth transfer. Then we will prove the fundamental lemma as well as the smooth transfer in the p-adic case. These two cases are slightly more difficult than the previous four cases because we need to take the Fourier-Jacobi coefficient.

\subsection{The relative trace formula}

We first define a Fourier-Jacobi coefficient for $\Spin_{2n+1}$. Let $P_n=M_nU_n$ be the standard parabolic subgroup of $\Spin_{2n+1}$ whose Levi factor is associated to the positive roots $\Delta-\{e_1-e_2,e_{n-1}+e_n\}$. The unipotent group $U_n$ contains an abelian subgroup spanned by $U_\alpha$ for $\alpha\in \{e_1,e_i+e_j|\;1\leq i<j\leq n\}$ which will be denoted by $U_n'$. For $u\in U_n'$, we define $\lambda(u)$ to be the summation of the projection of $u$ into $U_{e_1},U_{e_2+e_5}$ and $U_{e_3+e_4}$ and we let $U_n''$ to be the kernel of $\lambda_n$. Then it is easy to see that $U_n/U_n''$ is a Heisenberg group of dimension $2n+1$, and we get a Weil representation of $U/U_{n}''$ on the space of Schwartz functions of a vector space $Y_n$ of dimension $n$, which will be denoted by $\Omega_{n,\psi}$. We define the theta series
$$\Theta_{\psi,n}^{\varphi}(g)=\sum_{X\in Y_n(k)} \Omega_{n,\psi}(g)\varphi(X),\;\varphi\in \CS(Y_n(\BA)),\;g\in U_n(\BA).$$
It is a function on $U_n(k)U_n''(\BA)\back U_n(\BA)$.

For Model 5, $\hat{G}=\Spin_{11}$ and we let $Q=LN$ be the parabolic subgroup $P_5$ of $\Spin_{11}$ and we let $\Theta_{\psi,N}^{\varphi}$ be the theta series $\Theta_{\psi,5}^{\varphi}$ defined above. For Model 6, $\hat{G}=S(\GSpin_7\times \GL_2)$ and we let $Q=LN$ be the parabolic subgroup whose projection to $\GSpin_7$ (resp. $\GL_2$) is the parabolic subgroup $P_3$ (resp. the Borel subgroup). We have defined the theta function $\Theta_{\psi,3}^{\varphi}$ on the unipotent radical of $P_3$, let $\Theta_{\psi,N}^{\varphi}$ be the product of this function with the additive character 
$$\xi(\begin{pmatrix}1&x\\0&1 \end{pmatrix})=\psi(x)$$
on the unipotent subgroup of $\GL_2$. We use $Y'$ to denote the space $Y_5$ (resp. $Y_3$) if we are in the case of Model 5 (resp. Model 6), and we use $\Omega_{\psi}'$ to denote the Weil representation. 

For $f\in \CS(\hat{G}(\BA)/Z_{\hat{G}}(\BA))$, let $K_f(x,y)$ be the kernel function as in the previous sections. For $\varphi\in \CS(Y(\BA))$ and $\varphi'\in \CS(Y'(\BA))$, we define (the measure on $N(\BA)$ is the Haar measure such that $\vol(N(k)\back N(\BA))=1$)
$$I(f,\varphi,\varphi')=\int_{N(k)\back N(\BA)}\int_{\hat{G}(k)Z_{\hat{G}}(\BA)\back \hat{G}(\BA)} K_f(g,u) \Theta_{\psi}^{\varphi}(g)\Theta_{\psi,N}^{\varphi'}(u)\ dg\ du.$$
This is the other side of the relative trace formula in Conjecture \ref{conj rtf period 1} for the current case.

Next, we will unfold the distribution $I(f,\varphi,\varphi')$. We first consider Model 5. As in the previous cases, we can identify $Y$ with $Y=Mat_{1\times 1}\oplus \CJ$ where $\CJ=Asym_{6\times 6}$ is a degree 3 Jordan algebra. Let $N_0$ be the unipotent radical of the Siegel parabolic subgroup (i.e., it contains the roots $e_i,e_i+e_j$). We can identify $\CJ$ with $\Fn_0=Lie(N_0)$ \footnote{In the previous case, as in \cite{MR}, one can identify $\CJ$ with the Lie algebra of the Siegel unipotent subgroup of $\Spin_{12}$ which contains the roots $e_i+e_j$ for $1\leq i,j\leq 6$, here we just need to replace the roots $e_i+e_6$ of $\Spin_{12}$ by the roots $e_i$ of $\Spin_{11}$.}. Then the conjugation action of $N$ on $\Fn_0$ induces an action of $N$ on $\CJ$ (denoted by $(u,A)\in N\times \CJ\mapsto u.A$). We fix a basis $X_{e_i},X_{e_i+e_j}$ of $\Fn_0=\CJ$. By a similar argument as in \cite{R}, we have the following description of the Weil representation $\Omega_\psi$ which is an analog of Lemma 1 of \cite{MR} \footnote{Here we only write down the action when $a\neq 0$, the case when $a=0$ will appear later when we discuss the singular terms.}
\begin{equation}\label{theta 1}
\Omega_{\psi}(u_{e_1-e_i}(t))\varphi(a,A)=\psi(\frac{N_\CJ(A)-N_\CJ(u_{e_1-e_i}(t).A)}{a})\varphi(a,u_{e_1-e_i}(t).A)=\varphi(a,u_{e_1-e_i}(t).A),
\end{equation}
\begin{equation}\label{theta 2}
\Omega_\psi(u_{e_i}(t))\varphi(a,A)=\psi(\frac{N_\CJ(A)-N_\CJ(atX_{e_i}+u_{e_i}(t).A)}{a})\varphi(a,atX_{e_i}+u_{e_i}(t).A),
\end{equation}
\begin{equation}\label{theta 3}
\Omega_\psi(u_{e_i+e_j}(t))\varphi(a,A)=\psi(\frac{N_\CJ(A)-N_\CJ(atX_{e_i+e_j}+u_{e_i+e_j}(t).A)}{a})\varphi(a,atX_{e_i+e_j}+u_{e_i+e_j}(t).A)
\end{equation}
for $\varphi\in \CS(Y(\BA))=\CS(\BA\oplus \CJ(\BA))$ and $(a,A)\in \BA\oplus \CJ(\BA)$ with $a\neq 0$. Here for a root $\alpha$ of $\hat{G}$, we let $\Fu_\alpha\subset \hat{\Fg}$ be the root space of $\alpha$, $U_\alpha=\exp(\Fu_\alpha)$ and we have a natural homomorphism $u_\alpha:\Fg\Fl_1\rightarrow U_\alpha$. This in particular gives an action of $N$ on $Y=F\oplus \CJ$ denoted by $(n,(a,A))\mapsto n.(a,A)$. Since the action fix the $a$-coordinate, we will denoted it by $(n,(a,A))\mapsto (a,(n,a). A)$ (i.e. it is a $\GL_1\times N$-action on $\CJ$).

On the other hand, we can identify $Y'$ with $\oplus_{2\leq i\leq 5}\Fu_{e_1-e_i}$ and we fix a basis $X_{e_1-e_i}$ of it. We can also set the isotropic space dual to $Y'$ to be $X_{e_i}+\frac{(-1)^i}{2}X_{e_1-e_{7-i}}$. Then it is easy to see that the Weil representation $\Omega_{\psi}'$ is given by
($\varphi'\in \CS(Y'(\BA))$, $Y=\sum Y_i X_{e_1-e_i}\in Y'(\BA)$)
\begin{equation}\label{theta 4}
\Omega_{\psi}'(u_{e_1-e_i}(t))\varphi'(Y)=\varphi'(Y+tX_{e_1-e_i}),\;\Omega_{\psi}'(u_{e_i}(t))\varphi'(Y)=\psi(tY_i)\varphi'(Y+\frac{(-1)^itX_{e_1-e_{7-i}}}{2}),\;2\leq i\leq 5;
\end{equation}
\begin{equation}\label{theta 5}
\Omega_{\psi}'(u_{e_1}(t))\varphi'(Y)=\Omega_{\psi}'(u_{e_2+e_5}(t))\varphi'(Y)=\Omega_{\psi}'(u_{e_3+e_4}(t))\varphi'(Y)=\psi(t)\varphi'(Y);
\end{equation}
\begin{equation}\label{theta 6}
\Omega_{\psi}'(u_{e_i+e_j}(t))\varphi'(Y)=\varphi'(Y),\;e_i+e_j\notin \{e_2+e_5,e_3+e_4\}.
\end{equation}
This, in particular, gives an action of $N$ on $Y'$ denoted by $(n,Y)\mapsto n\star Y$. From \eqref{theta 1}-\eqref{theta 6} we know that 
\begin{equation}\label{theta 7}
\sum_{A\in \CJ(k),Y\in Y'(k)} \varphi(a,A)\varphi'(Y)=\sum_{n\in N(k)}\Omega_\psi(n)\varphi(a,0)\Omega_{\psi}'(n)\varphi'(0),\;\forall a\in k^\times.
\end{equation}
Moreover, let $N=N_1N_2$ where $N_1$ is spanned by $U_{e_1-e_i}$ and $N_2$ is spanned by the rest root spaces. By \eqref{theta 1}-\eqref{theta 6} it is easy to see that 
\begin{equation}\label{theta 8}
\int_{N_1(\BA)}\Omega_\psi(u)\varphi(a,0)\Omega_{\psi}'( u)\varphi'(0) \ du=\int_{Y'(\BA)}\Omega_\psi(u)\varphi(a,0)\Omega_{\psi}'( u)\varphi'(Y)\ dY
\end{equation}
and 
\begin{eqnarray}\label{theta 9}
\Omega_\psi(n)\varphi(a,0)\Omega_{\psi}'(n)\varphi'(Y)&=&\psi(\frac{-N_\CJ((n,a).0)-\pair{n\star Y,(n,a).0}+\tr((n,a).0)}{a}-\frac{\pair{(n,a).0,(n,a).0}}{2a^2})\nonumber\\
&&\times \varphi(a,(n,a).0)\varphi'(n\star Y),\;n\in N_2(\BA), Y\in Y'(\BA),\;a\neq 0.
\end{eqnarray}
Here for $Y=\sum Y_iX_{e_1-e_i}\in Y'$ and $A=\sum A_i X_{e_i}+A_{ij}X_{e_i+e_j}\in \CJ$, we define
$$\pair{Y,A}=\sum_{2\leq i\leq 5} Y_i A_i,\;\pair{A,A}=A_2A_5+A_3A_4.$$

Let $f=\otimes_{v\in |k|}f_v,\;\varphi=\otimes_{v\in |k|}\varphi_v$ and $\varphi'=\otimes_{v\in |k|}\varphi_v'$. We have
\begin{eqnarray*}I(f,\varphi,\varphi')&=&\int_{N(k)\back N(\BA)}\int_{\hat{G}(k)Z_{\hat{G}}(\BA)\back \hat{G}(\BA)} K_f(g,u) \Theta_{\psi}^{\varphi}(g)\Theta_{\psi,N}^{\varphi'}(u)\ dg\ du\\
&=&\int_{N(k)\back N(\BA)}\int_{\hat{G}(k)Z_{\hat{G}}(\BA)\back \hat{G}(\BA)}\sum_{\gamma\in \hat{G}/Z_{\hat{G}}(k)}f(g^{-1}\gamma u)\cdot \sum_{X\in Y(k)}\Omega_\psi(g)\varphi(X)\cdot \sum_{Y\in Y'(k)}\Omega_{\psi}'(u)\varphi'(Y) \ dg \ du\\
&=&\int_{N(k)\back N(\BA)}\int_{\hat{G}(\BA)/Z_{\hat{G}}(\BA)}f(g^{-1} u)\cdot \sum_{X\in Y(k)}\Omega_\psi(g)\varphi(X)\cdot \sum_{Y\in Y'(k)}\Omega_{\psi}'(u)\varphi'(Y) \ dg\ du\\
&=&\int_{N(k)\back N(\BA)}\sum_{X\in Y(k)}\Omega_\psi(u)f\ast\varphi(X)\cdot \sum_{Y\in Y'(k)}\Omega_{\psi}'(u)\varphi'(Y)\ du\\
&=&\int_{N(k)\back N(\BA)}\sum_{a\in k^\times,A\in \CJ(k)}\Omega_\psi(u)f\ast\varphi(a,A)\cdot \sum_{Y\in Y'(k)}\Omega_{\psi}'(u)\varphi'(Y)\  du\\
&&+\int_{N(k)\back N(\BA)}\sum_{A\in \CJ(k)}\Omega_\psi(u)f\ast\varphi(0,A)\cdot \sum_{Y\in Y'(k)}\Omega_{\psi}'(u)\varphi'(Y)\ du\end{eqnarray*}
where
$$f\ast \varphi=\int_{\hat{G}(\BA)/Z_{\hat{G}}(\BA)}f(g^{-1})\Omega_\psi(g)\varphi\ dg.$$

We first deal with the main term. By \eqref{theta 7}-\eqref{theta 9}, we have
\begin{eqnarray*}
	&&
\int_{N(k)\back N(\BA)}\sum_{a\in k^\times,A\in \CJ(k)}\Omega_\psi(u)f\ast\varphi(a,A)\cdot \sum_{Y\in Y'(k)}\Omega_{\psi}'(u)\varphi'(Y)\ du\\
&=&\int_{N(k)\back N(\BA)}\sum_{a\in k^\times}\sum_{\gamma\in N(k)}\Omega_\psi(\gamma u)f\ast\varphi(a,0)\Omega_{\psi}'(\gamma u)\varphi'(0) \ du\\
&=&\sum_{a\in k^\times}\int_{N(\BA)}\Omega_\psi(u)f\ast\varphi(a,0)\Omega_{\psi}'( u)\varphi'(0)\ du\\
&=&\sum_{a\in k^\times}\int_{Y'(\BA)}\int_{N_2(\BA)}\Omega_\psi(u)f\ast\varphi(a,0)\Omega_{\psi}'( u)\varphi'(Y)\ du\ dY\\
&=&\sum_{a\in k^\times} \int_{Y'(\BA)}\int_{N_2(\BA)} \psi(\frac{\tr((a,u).0)-N_\CJ((a,u).0)-\pair{u\star Y,(a,u).0}}{a}-\frac{\pair{(n,a).0,(n,a).0}}{2a^2})\\
&& \times f\ast\varphi(a,(a,u).0)\varphi'(u\star Y)\ du\\
&=&\sum_{a\in k^\times} \int_{\CJ(\BA)}\int_{Y'(\BA)}\psi(\frac{\tr(A)-N_\CJ(A)-\pair{Y,A}}{a}-\frac{\pair{Y,A}}{2a^2}) f\ast\varphi(a,A)\varphi'(Y)\ dY\ dA\\
&=&\sum_{a\in k^\times} \Pi_{v\in |k|} I_v(a,f_v\ast \varphi_v,\varphi_v')
\end{eqnarray*}
where 
$$I_v(a,f_v\ast\varphi_v,\varphi_v')=\int_{\CJ(k_v)}\int_{Y'(k_v)}\psi_v(\frac{\tr(A)-N_\CJ(A)-\pair{Y,A}}{a}-\frac{\pair{A,A}}{2a^2}) f_v\ast\varphi_v(a,A)\varphi_v'(Y)\ dY\ dA.$$

Next, we study the singular term
$$\int_{N(k)\back N(\BA)}\sum_{A\in \CJ(k)}\Omega_\psi(u)f\ast\varphi(0,A)\cdot \sum_{Y\in Y'(k)}\Omega_{\psi}'(u)\varphi'(Y)\ du.$$
As in \eqref{theta 1}-\eqref{theta 3}, we have the following description of the Weil representation on singular terms, which is also an analog of Lemma 1 of \cite{MR}:
\begin{equation}\label{theta 10}
\Omega_\psi(u_{e_i+e_j}(t))\varphi(0,A)=\psi(-\tr((tX_{e_i+e_j})\cdot A\times A))\varphi(0,A),
\end{equation}
\begin{equation}\label{theta 11}
\Omega_{\psi}(u_{e_1-e_i}(t))\varphi(0,A)=\varphi(0,u_{e_1-e_i}(t).A),
\end{equation}
\begin{eqnarray}\label{theta 12}
\Omega_{\psi}(u_{e_i}(t))\varphi(0,A)&=&\psi(-\tr((tX_{e_i})\cdot A\times A))\varphi(0,u_{e_i}(t).A)\\
&=&\psi(-\tr((tX_{e_i})\cdot u_{e_i}(t).A\times u_{e_i}(t).A))\varphi(0,u_{e_i}(t).A).\nonumber
\end{eqnarray}

If we write $A\in \CJ$ as $A=\sum_i A_i X_{e_i}+\sum_{i,j}A_{ij}X_{e_i+e_j}$, let $\CJ_1$ be the subset containing those $A\in \CJ$ such that 
\begin{itemize}
\item $A_i=0$ for $i>1$;
\item $A_{ij}=0$ for all $i,j>1$ with $(i,j)\neq (3,4),(2,5)$;
\item $A_{25}=A_{34}\neq 0$.
\end{itemize}
Let $\CJ_{11}$ be the subset of $\CJ_1$ containing those $A\in \CJ_1$ with $A_1=A_{25}=A_{34}=\pm 1$ and let $\CJ_2$ (resp. $\CJ_{12}$) be the complement of $\CJ_1$ (resp. $\CJ_{11}$) in $\CJ$ (resp. $\CJ_1$). We also decompose $\CJ_{11}$ as $\CJ^+\cup \CJ^-$ where $\CJ^+$ (resp. $\CJ^-$) contains those elements with $A_1=A_{25}=A_{34}=1$ (resp. $A_1=A_{25}=A_{34}=-1$).

\begin{prop}The following four identities hold.
$$\int_{N(k)\back N(\BA)}\sum_{A\in \CJ_2(k)}\Omega_\psi(u)f\ast\varphi(0,A)\cdot \sum_{Y\in Y'(k)}\Omega_{\psi}'(u)\varphi'(Y)\ du=0,$$
$$\int_{N(k)\back N(\BA)}\sum_{A\in \CJ_{12}(k)}\Omega_\psi(u)f\ast\varphi(0,A)\cdot \sum_{Y\in Y'(k)}\Omega_{\psi}'(u)\varphi'(Y)\ du=0,$$
$$\int_{N(k)\back N(\BA)}\sum_{A\in \CJ^+(k)}\Omega_\psi(u)f\ast\varphi(0,A)\cdot \sum_{Y\in Y'(k)}\Omega_{\psi}'(u)\varphi'(Y) \ du=\int_{J^+(\BA)}\int_{Y'(\BA)}f\ast\varphi(0,A)\varphi'(Y)\psi(2Y\cdot A)\ dY\ dA,$$
$$\int_{N(k)\back N(\BA)}\sum_{A\in \CJ^-(k)}\Omega_\psi(u)f\ast\varphi(0,A)\cdot \sum_{Y\in Y'(k)}\Omega_{\psi}'(u)\varphi'(Y) \ du=\int_{J^-(\BA)}\int_{Y'(\BA)}f\ast\varphi(0,A)\varphi'(Y)\psi(2Y\cdot A)\ dY\ dA.$$
Here $Y\cdot A=(A_{12}-Y_5)(A_{15}-Y_2)+(A_{13}-Y_4)(A_{14}-Y_3)$.
\end{prop}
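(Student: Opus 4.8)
The plan is to run the same unfolding of the singular term as in \cite{MR}, now keeping track of the extra Fourier--Jacobi variable carried by $\varphi'\in\CS(Y'(\BA))$. I would work throughout with the decomposition $N=N_1N_2$ introduced above, the explicit formulas \eqref{theta 10}--\eqref{theta 12} for the action of the root subgroups on the singular vectors $\varphi(0,A)$, and the formulas \eqref{theta 4}--\eqref{theta 6} for $\Omega'_\psi$. The relevant structure is that $N_1=\prod_{2\le i\le 5}U_{e_1-e_i}$ acts on $\{0\}\times\CJ$ by affine translations $A\mapsto u_{e_1-e_i}(t).A$ (and on $Y'$ by the translations in \eqref{theta 4}), while the root groups $U_{e_i}$ and $U_{e_i+e_j}$ inside $N_2$ act on $\varphi(0,A)$ through the additive characters $\psi(-\tr(tX_\gamma\cdot A\times A))$ of \eqref{theta 10} and \eqref{theta 12} (together with a translation of $A$ in the $U_{e_i}$ case), and on $Y'$ through the characters in \eqref{theta 4}--\eqref{theta 6}. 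First I would reorganize $\sum_{A\in\CJ(k)}$ into $N(k)$-orbits and unfold $\int_{N(k)\bs N(\BA)}$ against each orbit, exactly as was done for the main term but without the collapse \eqref{theta 7} (which required $a\neq0$).

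\textbf{The two vanishing identities.} For $A\in\CJ(k)$, integrating over the variable $t$ of a root group $U_{e_i+e_j}$ with $e_i+e_j\notin\{e_2+e_5,e_3+e_4\}$ produces $\int_{k\bs\BA}\psi(-t\cdot c_\gamma(A\times A))\,dt$, where $c_\gamma$ denotes the $X_\gamma$-coordinate; this vanishes unless $c_\gamma(A\times A)=0$. A direct computation of the cross product on $\CJ=Asym_{6\times6}$ — the analogue of Lemma 4 of \cite{MR} — identifies, after using the $N_1$-translations, the set of $A$ for which all these coordinates of $A\times A$ vanish with $\CJ_1(k)$. Every $A\in\CJ_2(k)$ is therefore off this locus and its orbit contributes $0$, which is the first identity. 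For the second, on $\CJ_1$ one continues with the root groups $U_{e_1}$, $U_{e_2+e_5}$, $U_{e_3+e_4}$: by \eqref{theta 5} these contribute the extra factor $\psi(t)$ from $\Omega'_\psi$, so triviality now forces $c_\gamma(A\times A)=1$ for these three directions, and the same cross-product computation shows this is equivalent (given $A_{25}=A_{34}\neq0$) to $A_1=A_{25}=A_{34}\in\{1,-1\}$; hence the orbits meeting $\CJ_{12}(k)$ also integrate to $0$.

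\textbf{The two surviving identities.} On $\CJ_{11}(k)$ the base points $\pm I$ have trivial stabilizer in $N(k)$ (in the relevant sense), so the orbit sum unfolds $\int_{N(k)\bs N(\BA)}$ to an integral over $N_1(\BA)$ in which the translations $A\mapsto u_{e_1-e_i}(t).A$ sweep the free coordinates $A_{12},A_{13},A_{14},A_{15}$ over $\BA$, i.e.\ cover $\CJ^{+}(\BA)$ (resp.\ $\CJ^{-}(\BA)$); folding in the $Y'$-sum by the analogue of \eqref{theta 8} covers $Y'(\BA)$, and evaluating \eqref{theta 4}--\eqref{theta 6} and \eqref{theta 10}--\eqref{theta 12} on the representative, all accumulated characters collapse (the factor $2$ coming from the $\tfrac{(-1)^i}{2}$ in \eqref{theta 4}) to $\psi(2Y\cdot A)$ with $Y\cdot A=(A_{12}-Y_5)(A_{15}-Y_2)+(A_{13}-Y_4)(A_{14}-Y_3)$. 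The split $\CJ_{11}=\CJ^{+}\sqcup\CJ^{-}$ according to $A_1=A_{25}=A_{34}=\pm1$ (equivalently according to $\pm I$) then yields the last two identities with the integrals over $J^{+}(\BA)$ and $J^{-}(\BA)$.

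\textbf{Expected main obstacle.} The delicate part is the algebraic/combinatorial bookkeeping of the $N_2$-action: one must identify exactly which additive characters occur in the product $\Omega_\psi(u)\varphi(0,A)\cdot\Omega'_\psi(u)\varphi'(Y)$ as $u$ ranges over $N_2$, and show that demanding all of them be trivial cuts the sum down to precisely $\CJ_{11}$ — not more and not less — which rests on the explicit cross-product identities on $Asym_{6\times6}$ (the analogue of the computations in \cite{MR}) and on the fact that the degenerate quadratic term $\pair{A}{A}/2a^2$ appearing in \eqref{theta 9} for $a\neq0$ drops out in the singular stratum $a=0$. I expect Model 6 to go through with $\CJ$ replaced by the smaller degree-$3$ Jordan algebra attached to $\GSpin_7$ and the extra $\GL_2$-Borel character $\psi(x)$ carried along, once Model 5 is established.
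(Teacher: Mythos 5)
Your proposal takes essentially the same route as the paper's proof, and it identifies the correct mechanisms: the vanishing on $\CJ_2$ and $\CJ_{12}$ comes from integrating the additive characters read off from \eqref{theta 10}--\eqref{theta 12} and \eqref{theta 4}--\eqref{theta 6} over the normal root subgroups generated by $\{U_{e_i+e_j}\}$ (the paper's $N'$) and by $\{U_{e_1},U_{e_i+e_j}\}$ (the paper's $N''$), while the surviving strata $\CJ^{\pm}$ are unfolded via an orbit decomposition of the sum over $\CJ^{\pm}(k)\times Y'(k)$ under $N''(k)\backslash N(k)$, collapsing to the adelic integral over $\CJ^{\pm}(\BA)\times Y'(\BA)$ with the phase $\psi(2Y\cdot A)$. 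The only differences from the paper are presentational: the paper works with the nested normal subgroups $N'\subset N''\subset N$ rather than your $N_1N_2$ labels from the main-term analysis; the ``$N_1$-translations'' you invoke in the vanishing steps are unnecessary (the paper simply fixes $A$ and observes that the $N'$-, resp. $N''$-, character is already nontrivial for $A\in\CJ_2$, resp. $A\in\CJ_{12}$); and your statement that the character-triviality locus equals $\CJ_1(k)$ is a slight overstatement (it cannot be literally an equality since $\CJ_1$ is cut out by the open condition $A_{25}=A_{34}\neq 0$) --- the one inclusion actually needed, and the one the paper asserts, is $\{N'\text{-character trivial}\}\subseteq\CJ_1(k)$.
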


\begin{proof}
Let $N'$ be the subgroup of $N$ generated by the roots $e_i+e_j$ and let $N''$ be the subgroup of $N$ generated by the roots $e_1,e_i+e_j$. We have defined a character $\xi$ on $N''(\BA)$ to be $\xi(u)=\psi(\lambda(u))$ where $\lambda(u)$ is summation of the coordinates of the projection of $u$ into $U_{e_1},U_{e_2+e_5}$ and $U_{e_3+e_4}$. By \eqref{theta 10}-\eqref{theta 12}, we have (here we identify $N''$ with a subspace of $\CJ$ via the exponential map as we already identified $\CJ$ with the Lie algebra of $N$)
$$\Omega_\psi(u)f\ast\varphi(0,A)\Omega_{\psi}'(u)\varphi'(Y)=\xi(u)\xi(-\tr(u\cdot A\times A)f\ast\varphi(0,A)\varphi'(Y)$$
for all $A\in \CJ(\BA),Y\in Y'(\BA)$ and $u\in N'(\BA)$. If $A\in \CJ_2(k)$, it is clear that the character 
$$u\in N'(\BA) \mapsto \xi(u)\xi(-\tr(u)\cdot A\times A)$$ 
of $N'(\BA)$ is nontrivial, and this proves the first equation.

For the second equation, for $A\in \CJ_1$, by \eqref{theta 10}-\eqref{theta 12}, we have \footnote{Note that this would not be true if we do not assume $A\in \CJ_1$.}
$$\Omega_\psi(u)f\ast\varphi(0,A)\Omega_{\psi}'(u)\varphi'(Y)=\xi(u)\xi(-\tr(u)\cdot A\times A)f\ast\varphi(0,A)\varphi'(Y)$$
for all $Y\in Y'(\BA)$ and $u\in N''(\BA)$. It is clear that the character 
$$u\in N''(\BA)\rightarrow \xi(u)\xi(-\tr(u)\cdot A\times A)$$ 
of $N''(\BA)$ is trivial if and only if $A\in \CJ_{11}(k)$. This proves the second equation. 

The proof of the remaining two equations is similar, so we will only prove the third one. From our discussion above, we know that 
$$\int_{N(k)\back N(\BA)}\sum_{A\in \CJ^+(k)}\Omega_\psi(u)f\ast\varphi(0,A)\cdot \sum_{Y\in Y'(k)}\Omega_{\psi}'(u)\varphi'(Y)\ du$$
$$=\int_{N(k)N''(\BA)\back N(\BA)}\sum_{A\in \CJ^+(k)}\Omega_\psi(u)f\ast\varphi(0,A)\cdot \sum_{Y\in Y'(k)}\Omega_{\psi}'(u)\varphi'(Y)\ du.$$

By \eqref{theta 4}-\eqref{theta 6} and \eqref{theta 10}-\eqref{theta 12}, we have
$$\sum_{A\in \CJ^+(k)}f\ast\varphi(0,A)\cdot \sum_{Y\in Y'(k)}\varphi'(Y)=\sum_{u\in N''(k)\back N(k)} \Omega_\psi(u)f\ast\varphi(0,I)\cdot \Omega_{\psi}'(u)\varphi'(0)$$
and
$$\Omega_\psi(u)f\ast\varphi(0,I)\cdot \Omega_{\psi}'(u)\varphi'(0)=f\ast\varphi(0,u.I)\cdot \varphi'(u\star 0)\psi(2u\star 0\cdot u.I).$$
Here $I=X_{e_1}+X_{e_2+e_5}+X_{e_3+e_4}$ is the identity element of the Jordan algebra. Then, the above expression is equal to 
$$\int_{N''(\BA)\back N(\BA)} \Omega_\psi(u)f\ast\varphi(0,I)\cdot \Omega_{\psi}'(u)\varphi'(0)\ dA=\int_{\CJ^+(\BA)}\int_{Y'(\BA)}f\ast\varphi(0,A)\varphi'(Y)\psi(2Y\cdot A)\ dY\ dA$$
This proves the proposition.
\end{proof}

The above proposition implies that 
\begin{eqnarray*}
	&&\int_{N(k)\back N(\BA)}\sum_{A\in \CJ(k)}\Omega_\psi(u)f\ast\varphi(0,A)\cdot \sum_{Y\in Y'(k)}\Omega_{\psi}'(u)\varphi'(Y)\ du\\
&=&\int_{\CJ^+(\BA)}\int_{Y'(\BA)}f\ast\varphi(0,A)\varphi'(Y)\psi(2Y\cdot A)\ dY\ dA\\
&&+\int_{\CJ^-(\BA)}\int_{Y'(\BA)}f\ast\varphi(0,A)\varphi'(Y)\psi(2Y\cdot A)\ dY\ dA\\
&=&I^+(f\ast \varphi,\varphi')+I^-(f\ast\varphi,\varphi')=\Pi_{v\in |k|}I_{v}^{+}(f_v\ast \varphi_v,\varphi_v')+\Pi_{v\in |k|}I_{v}^{-}(f_v\ast \varphi_v,\varphi_v')
\end{eqnarray*}
where
$$I_{v}^{\varepsilon}(f_v\ast \varphi_v,\varphi_v')=\int_{\CJ^\varepsilon(k_v)}\int_{Y'(k_v)}f_v\ast\varphi_v(0,A)\varphi_v'(Y)\psi(2Y\cdot A)\ dY\ dA,\;\varepsilon\in \{\pm\}.$$

The same argument also applies to the case when $\hat{G}=S(\GSpin_7\times \GL_2)$. The only difference is that in this case $\CJ$ is $Mat_{1\times 1}\times Asym_{4\times 4}$ and we can identify it with the Lie algebra of the Siegel unipotent subgroup, i.e. we identify $Mat_{1\times 1}$ with the unipotent of the Borel subgroup of $\GL_2$, and we identify $Asym_{4\times 4}$ with the root spaces $X_{e_i},X_{e_i+e_j}$ for $1\leq i\neq j\leq 3$. Also in this case $Y'$ is spanned by the root spaces of $e_1-e_2$ and $e_1-e_3$. To summarize, we have proved the following proposition.

\begin{prop}
For $f=\otimes_{v\in |k|}f_v,\;\varphi=\otimes_{v\in |k|}\varphi_v$ and $\varphi'=\otimes_{v\in |k|}\varphi_v'$, we have
$$I(f,\varphi,\varphi')=\Pi_{v\in |k|}I_{v}^{+}(f_v\ast \varphi_v,\varphi_v')+\Pi_{v\in |k|}I_{v}^{-}(f_v\ast \varphi_v,\varphi_v')+\sum_{a\in k^\times} \Pi_{v\in |k|} I_v(a,f_v\ast \varphi_v,\varphi_v')$$
where
$$I_v(a,f_v\ast\varphi_v,\varphi_v')=\int_{\CJ(k_v)}\int_{Y'(k_v)}\psi_v(\frac{\tr(A)-N_\CJ(A)-\pair{Y,A}}{a}-\frac{\pair{A,A}}{2a^2}) f_v\ast\varphi_v(a,A)\varphi_v'(Y)\ dY\ dA$$
and
$$I_{v}^{\varepsilon}(f_v\ast \varphi_v,\varphi_v')=\int_{\CJ^\varepsilon(k_v)}\int_{Y'(k_v)}f_v\ast\varphi_v(0,A)\varphi_v'(Y)\psi(2Y\cdot A)\ dY\ dA,\;\varepsilon\in \{\pm\}.$$
\end{prop}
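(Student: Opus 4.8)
For $\hat G=\Spin_{11}$ (Model 5) the asserted formula is already established: the displayed unfolding of $I(f,\varphi,\varphi')$ above produces the main term $\sum_{a\in k^\times}\prod_{v}I_v(a,f_v\ast\varphi_v,\varphi_v')$, and the preceding Proposition on the singular locus (the four identities for $\CJ_2$, $\CJ_{12}$, $\CJ^+$, $\CJ^-$) collapses the $a=0$ contribution to $\prod_v I_v^+(f_v\ast\varphi_v,\varphi_v')+\prod_v I_v^-(f_v\ast\varphi_v,\varphi_v')$. So the only thing left is to run the identical argument for $\hat G=S(\GSpin_7\times\GL_2)$ (Model 6).

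First I would install the Model~6 analogues of all the objects used above. With $Q=LN$ the product parabolic of Model~6, one identifies $\CJ=Mat_{1\times 1}\times Asym_{4\times 4}$ with the Lie algebra of the Siegel unipotent, the $Mat_{1\times 1}$-factor being the unipotent of the Borel of $\GL_2$ and $Asym_{4\times 4}$ the span of the root vectors $X_{e_i},X_{e_i+e_j}$ ($1\le i\ne j\le 3$) of the $\GSpin_7$-factor; one takes $Y=Mat_{1\times 1}\oplus\CJ$, lets $Y'$ be spanned by $X_{e_1-e_2}$ and $X_{e_1-e_3}$, and records the conjugation $\GL_1\times N$-action $(n,a).A$ on $\CJ$ and the $N$-action $n\star Y$ on $Y'$. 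One then writes down $\Omega_\psi$ on $\CS(Y(\BA))$ and $\Omega_\psi'$ on $\CS(Y'(\BA))$: for the root subgroups $U_{e_1-e_i}$, $U_{e_i}$, $U_{e_i+e_j}$ the formulas are the rank-$3$ specialization of those of \cite{R} appearing in \eqref{theta 1}--\eqref{theta 3} and \eqref{theta 10}--\eqref{theta 12}, while the unipotent of the Borel of $\GL_2$ acts on $\CS(Y'(\BA))$ through the character $\psi$ (this is the factor $\xi$ multiplied into $\Theta_{\psi,N}^{\varphi}$) and trivially on the $\GSpin_7$-Schwartz data. The key identities to reprove in this normalization are the analogues of \eqref{theta 7}--\eqref{theta 9}: that $\sum_{A\in\CJ(k),\,Y\in Y'(k)}\Omega_\psi(n)\varphi(a,A)\,\Omega_\psi'(n)\varphi'(Y)=\sum_{n'\in N(k)}\Omega_\psi(n'n)\varphi(a,0)\,\Omega_\psi'(n'n)\varphi'(0)$ for $a\in k^\times$, the factorization of the $N_1(\BA)$-integral as an integral over $Y'(\BA)$, and the explicit cocycle identity for $n\in N_2(\BA)$ carrying the linear and quadratic terms $\tfrac{\tr(A)-N_\CJ(A)-\pair{Y,A}}{a}-\tfrac{\pair{A,A}}{2a^2}$.

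With these formulas in hand, the unfolding of $I(f,\varphi,\varphi')$ proceeds verbatim as in the $\Spin_{11}$ case: open the kernel, use automorphy to replace $\int_{\hat G(k)Z_{\hat G}(\BA)\back\hat G(\BA)}\sum_{\gamma\in\hat G/Z_{\hat G}(k)}$ by $\int_{\hat G(\BA)/Z_{\hat G}(\BA)}$, form $f\ast\varphi$, and split the $Y(k)$-sum according to $a\ne 0$ and $a=0$. For $a\ne 0$ the collapse via the analogues of \eqref{theta 7}--\eqref{theta 9} turns $\int_{N(k)\back N(\BA)}$ into $\int_{N(\BA)}$; integrating out $N_1\cong Y'$ and then $N_2$ produces $\sum_{a\in k^\times}\prod_v I_v(a,f_v\ast\varphi_v,\varphi_v')$ with $I_v$ as in the statement. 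For $a=0$ one reproves the four-identity Proposition with the evident analogues $\CJ_1$, $\CJ_{11}=\CJ^+\cup\CJ^-$, $\CJ_2$, $\CJ_{12}$ of the subsets defined above, now inside $Mat_{1\times 1}\times Asym_{4\times 4}$ (with the $Mat_{1\times 1}$-coordinate in the role of $A_1$): the auxiliary characters on $N'$, resp.\ $N''$, are nontrivial on $\CJ_2(k)$, resp.\ $\CJ_{12}(k)$, so those summands vanish, while on $\CJ^\varepsilon(k)$ the surviving sum collapses to $\int_{N''(\BA)\back N(\BA)}$, i.e.\ to $\int_{\CJ^\varepsilon(\BA)}\int_{Y'(\BA)}f\ast\varphi(0,A)\,\varphi'(Y)\,\psi(2Y\cdot A)\,dY\,dA=\prod_v I_v^\varepsilon(f_v\ast\varphi_v,\varphi_v')$. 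Adding the three contributions and factoring over the places of $k$ gives the claimed identity.

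I expect the only real difficulty to be bookkeeping rather than anything conceptual: pinning down the explicit Weil-representation formulas in the Model~6 normalization --- in particular the precise exponents $\tfrac{\tr(A)-N_\CJ(A)-\pair{Y,A}}{a}-\tfrac{\pair{A,A}}{2a^2}$ and the pairing $Y\cdot A$ in the singular terms --- and checking that the identification of $N''$ (spanned by the character-defining roots of the $\GSpin_7$-Borel together with the $\GL_2$-unipotent) with a subspace of $\CJ$ makes the triviality criterion in the analogue of the singular-term Proposition come out to be exactly $A\in\CJ_{11}$. These are precisely the places where the discussion above says "the same argument also applies" and "we will skip the details"; once the analogues of \eqref{theta 1}--\eqref{theta 12} are verified, the rest of the derivation is formally identical to the $\Spin_{11}$ computation already carried out, so no new idea is required.
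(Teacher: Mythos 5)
Your proposal is correct and takes essentially the same approach as the paper: the Proposition is presented there as a summary of the unfolding computation just carried out for $\hat G=\Spin_{11}$ (main term via the analogues of \eqref{theta 7}--\eqref{theta 9}, singular terms via the four-identity Proposition), and the paper handles Model~6 with exactly the one-paragraph remark you reconstruct --- identify $\CJ=Mat_{1\times1}\times Asym_{4\times4}$ with the Siegel Lie algebra, take $Y'$ spanned by $X_{e_1-e_2},X_{e_1-e_3}$, and note that the rest of the unfolding goes through verbatim.
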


Now, we can formulate the fundamental lemma and smooth transfer.

\begin{conj}\label{conj case 2}
With the notation above, let $n=\dim(\CJ)$. The following holds.
\begin{enumerate}
\item (Fundamental lemma) Assume that $v$ is non-archimedean, the residue characteristic is not equal to 2, and $\psi_v$ is unramified over $v$. Let $\CO_v$ be the ring of integers of $k_v$ and let $f_{0,v}'$ (resp. $f_{0,v},\;\varphi_{0,v},\;\varphi_{0,v}'$) be the characteristic function of $G'(\CO_{v})$ (resp. $\hat{G}(\CO_v)Z_{\hat{G}}(k_v)$, $Y(\CO_v)$, $Y'(\CO_v)$). Then (note that $f_{0,v}\ast \varphi_{0,v}=\varphi_{0,v}$)
$$I_{v}^{+}(\varphi_{0,v},\varphi_{0,v}')=J_{v}^{+}(f_{0,v}'),I_{v}^{-}(\varphi_{0,v},\varphi_{0,v}')=J_{v}^{-}(f_{0,v}'),$$
$$I_v(a,\varphi_{0,v},\varphi_{0,v}')=|4|\cdot |a|^{(n+1)/2}J_v(a,f_{0,v}'),\;\forall a\in k_{v}^{\times}.$$
\item (Smooth transfer) For any $f_v'\in \CS(G'(k_v))$ (resp. $f_v\in \CS(\hat{G}(k_v)/Z_{\hat{G}}(k_v))$, $\varphi_v\in \CS(Y(k_v))$ and $\varphi_v'\in \CS(Y'(k_v))$), there exists $f_v\in \CS(\hat{G}(k_v)/Z_{\hat{G}}(k_v))$, $\varphi_v\in \CS(Y(k_v))$ and $\varphi_v'\in \CS(Y'(k_v))$ (resp. $f_v'\in \CS(G'(k_v))$) such that
$$I_{v}^{+}(f_v\ast \varphi_v,\varphi_v')=J_{v}^{+}(f_v'),\;I_{v}^{-}(f_v\ast \varphi_v,\varphi_v')=J_{v}^{-}(f_v'),\;I_v(a,f_v\ast\varphi_v,\varphi_v')=|4|\cdot |a|^{(n+1)/2}J_v(a,f_v'),\;\forall a\in k_{v}^{\times}.$$
\end{enumerate}
\end{conj}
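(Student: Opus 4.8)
The plan is to reduce Conjecture \ref{conj case 2} for Models 5 and 6 to Proposition \ref{orbital integral SL(2)}, exactly as in the Model 1--4 case (Theorem \ref{orbital integral Model 1-4}), by establishing the appropriate local harmonic-analytic statement for the orbital integrals $I_v(a,\varphi_v,\varphi_v')$ that now carry an extra $Y$-integration coming from the Fourier--Jacobi coefficient. Concretely, I would prove a ``Theorem \ref{orbital integral Model 1-4}''-type statement: (i) surjectivity of $\varphi\mapsto (a\mapsto I(a,\varphi,\varphi'))$ onto $C_c^\infty(F^\times)$ — trivial; (ii) an explicit unramified computation showing $I^+(\varphi_0,\varphi_0')=I^-(\varphi_0,\varphi_0')=1$ and $I(a,\varphi_0,\varphi_0')=|4|\cdot|a|^{(n-1)/2}\int_{\CO_F^\times}\psi((x+x^{-1})/a)\,dx$ for $|a|<1$ (with the analogous vanishing/normalization for $|a|\geq1$); and (iii) membership of $a\mapsto|a|^{(1-n)/2}|4|^{-1}I(a,\varphi,\varphi')$ in the space $C_{OI}^\infty(F^\times,c_+,c_-)$ with $c_\pm=I^\pm(\varphi,\varphi')$. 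Once (i)--(iii) are in hand, the fundamental lemma and smooth transfer follow verbatim from Proposition \ref{orbital integral SL(2)} as in \S4, the only new feature being the constant $|4|$, which I expect to arise from completing the square in the quadratic-in-$Y$ exponent $\psi_v(-\pair{A,A}/(2a^2)-\pair{Y,A}/a)$ and performing the Gaussian integral over $Y\in Y'(k_v)$.

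The first concrete step is to integrate out $Y$ in
$$I_v(a,\varphi_v,\varphi_v')=\int_{\CJ(k_v)}\int_{Y'(k_v)}\psi_v\Big(\frac{\tr(A)-N_\CJ(A)-\pair{Y,A}}{a}-\frac{\pair{A,A}}{2a^2}\Big)\,\varphi_v(a,A)\,\varphi_v'(Y)\,dY\,dA.$$
Taking $\varphi_v'=\varphi_{0,v}'$ the characteristic function of $Y'(\CO_v)$, the $Y$-integral is $\int_{Y'(\CO_v)}\psi_v(-\pair{Y,A}/a)\,dY$, which for $\psi_v$ unramified is the characteristic function of $\{A:\pair{Y',A}\in a\CO_v\ \forall Y'\in Y'(\CO_v)\}$, i.e.\ a congruence condition on the four coordinates of $A$ dual to $Y'$. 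Substituting this back, the computation becomes a refinement of the one in \cite{MR}: one reorganizes the remaining $\CJ$-integral along the root coordinates $x_\gamma$, $\gamma\in\Pi_0'$, uses that $N_\CJ$ is linear in each $x_\gamma$ with coefficient a coordinate of $A\times A$, and isolates the locus $A\times A=I$ (equivalently $A=\pm I$ after the congruence constraints), exactly as in the proof of Theorem \ref{orbital integral Model 1-4}. The term $\psi_v(-\pair{A,A}/(2a^2))$ is new: on the relevant locus $A_{25}=A_{34}$ and one pairs the shifted integration variables as in the displayed change of variables on p.\ 334 of \cite{MR}, producing, after the Gaussian integration in the directions where $\pair{A,A}$ is nondegenerate, the factor $|4|^{-1}$ (hence the $|4|$ on the geometric side of the identity). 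The residue characteristic $\neq 2$ hypothesis is precisely what makes $|4|=1$ generically and, more importantly, guarantees the completion of the square is valid with the $\psi_v$-self-dual measure.

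For part (iii), i.e.\ identifying the germ of $I_v(a,\varphi_v,\varphi_v')$ near $a=0$ for general $\varphi_v$, I would again follow \S4: reduce to $\varphi_v$ supported on $\varpi^m\CO_F\times(A_0+\varpi^m\CJ(\CO_F))$, split into the case $A_0+\varpi^m\CJ(\CO_F)$ avoiding $\pm I$ (where one produces a nonconstant additive character in some $x_\gamma$ and concludes $I_v(a,\varphi_v,\varphi_v')\in C_c^\infty(F^\times)$ for $a$ small) and the case $A_0\equiv\pm I$, where the elementary identity \eqref{basic equation 1} together with the $Y$-Gaussian yields the leading term $|4|\cdot|a|^{(n-1)/2}\int_{1+\varpi\CO_F}\psi((x+x^{-1})/a)\,dx$. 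The main obstacle I anticipate is bookkeeping in the singular directions: one must check that the Fourier--Jacobi variable $Y$ and the ``theta'' variable $A$ interact through $\pair{Y,A}$ in a way that, after the change of variables linearizing $\tr(A)-N_\CJ(A)$, leaves a clean nondegenerate quadratic form in the surviving $Y$-coordinates — essentially the content of the Weil-representation identities \eqref{theta 7}--\eqref{theta 12} specialized to the integer-ring data. Once that compatibility is verified (it is what the Proposition preceding Conjecture \ref{conj case 2} is set up to give), the rest is the same mechanism as Model 1--4, now carrying the harmless constant $|4|$.
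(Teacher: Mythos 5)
Your overall plan — prove a Theorem~\ref{orbital integral Model 1-4}--type statement for the orbital integrals $I_v(a,\cdot,\cdot)$ and combine it with Proposition~\ref{orbital integral SL(2)} — is exactly the strategy of the paper (Theorem~\ref{orbital integral Model 5-6}). Your parts (i) and (ii) track the paper closely: integrating out the Fourier--Jacobi variable $Y$ over $Y'(\CO_v)$ produces the congruence condition $A_i\in a\CO_F$ for $2\leq i\leq 5$, after which the assumption that the residue characteristic is odd kills the term $\psi(\tfrac{\pair{A,A}}{2a^2})$, reducing (ii) to the Model~1--4 calculation.

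There is, however, a genuine gap in your part (iii). You propose to split on whether $A_0+\varpi^m\CJ(\CO_F)$ contains $\pm I$, exactly as in \S4. For Models 5--6 this is the wrong dichotomy: the singular locus is $\CJ^+(F)\cup\CJ^-(F)$, a positive-dimensional subvariety of $\CJ$ (in Model 5, $\CJ^{\pm}$ is parametrized by the four coordinates $A_{1i}$, $2\leq i\leq 5$, with $A_1=A_{25}=A_{34}=\pm1$ fixed and the remaining coordinates zero), not merely the two points $\pm I$. The paper's step (4) shows the germ of $I(a,\varphi,\varphi')$ vanishes precisely when the support avoids $\CJ^+(F)\cup\CJ^-(F)$; for $A_0\in\CJ^{\pm}(F)$ one must carry out the computation with arbitrary $A_{1i,0}$ and $Y_{i,0}$, and the resulting leading coefficient contains the nontrivial factor $\psi(2(Y_{2,0}-A_{15,0})(Y_{5,0}-A_{12,0})+2(Y_{3,0}-A_{14,0})(Y_{4,0}-A_{13,0}))$, which is exactly what reproduces $I^{\pm}(\varphi,\varphi')$. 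Under your split, a $\varphi$ supported near $A_0\in\CJ^+(F)\smallsetminus\{I\}$ falls into the ``avoids $\pm I$'' case, and you would wrongly conclude the germ lies in $C_c^\infty(F^\times)$ even though $I^+(\varphi,\varphi')$ is generically nonzero for such $\varphi$. The enlargement of the singular locus is precisely the new feature created by the Fourier--Jacobi coefficient: the boundary terms $I^{\pm}$ are integrals over $\CJ^{\pm}\times Y'$, which is the content of the Proposition that immediately precedes Conjecture~\ref{conj case 2}; you cite that Proposition for ``compatibility'' but do not carry its conclusion into your case analysis. A smaller misattribution: the constant $|4|$ does not come from a Gaussian over $Y$ — the exponent $-\pair{Y,A}/a-\pair{A,A}/(2a^2)$ is linear in $Y$, so the $Y$-integral is just a Fourier transform of an indicator. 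In the paper's computation the $|4|$ arises from the Gaussian $\int_{(\varpi^{-m}\CO_F)^4}\psi(-\tfrac{A_2A_5+A_3A_4}{2})\prod_i dA_i$ over the coordinates $A_i$, after the $Y$- and $A_{1i}$-integrals have been performed. (Numerically harmless since $|4|=1$ in odd residue characteristic, but the bookkeeping is genuinely located in the $A$-variables.)
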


\subsection{The proof of the fundamental lemma and smooth transfer}

In this subsection we will prove the fundamental lemma and smooth transfer in the p-adic case. To simplify the notation, we will use $F$ to replace the local field $k_v$ and eliminate all the subscript $v$. We also assume that $F$ is non-archimedean. By Proposition \ref{orbital integral SL(2)}, we only need to prove the following theorem.

\begin{thm}\label{orbital integral Model 5-6}
\begin{enumerate}
\item For $F\in C_{c}^{\infty}(F^\times)$, there exists $\varphi\in C_{c}^{\infty}(Y(F))$ and $\varphi'\in C_{c}^{\infty}(Y'(F))$ such that
$$I(a,\varphi,\varphi')=F(a),\;\forall a\in F^\times.$$
\item If $\varphi$ (resp. $\varphi'$) is the characteristic function of $Y(\CO_F)$ (resp. $Y'(\CO_F)$), $\psi$ is unramified, and the residue characteristic of $F$ is not 2, then $I^+(\varphi,\varphi')=I^-(\varphi,\varphi')=1$ and
$$I(a,\varphi,\varphi')=0\;\text{if}\;|a|>1;\; I(a,\varphi,\varphi')=1\;\text{if}\;|a|=1;\; I(a,\varphi,\varphi')=|a|^{(n-1)/2}\int_{\CO_{F}^{\times}}\psi(\frac{x+x^{-1}}{a})\ dx\;\text{if}\; |a|<1.$$
\item For $\varphi\in C_{c}^{\infty}(Y(F))$ and $\varphi'\in C_{c}^{\infty}(Y'(F))$, the function
$$a\in F^\times\mapsto |a|^{(1-n)/2}I(a,\varphi,\varphi')$$
belongs to the space $C_{OI}^{\infty}(F^\times,c_+,c_-)$ for $c_+=|4|\cdot I^+(\varphi,\varphi')$ and $c_-=|4|\cdot I^-(\varphi,\varphi')$.
\end{enumerate}
\end{thm}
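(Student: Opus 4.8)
The plan is to prove Theorem~\ref{orbital integral Model 5-6} by combining the explicit unfolding obtained just above for $I(a,f\ast\varphi,\varphi')$ and $I^{\pm}(f\ast\varphi,\varphi')$ with the analysis of the Jordan-algebra orbital integral $\int_{\CJ}\psi(\frac{\tr(A)-N_\CJ(A)}{a})\,dA$ carried out in \cite[Theorem 2]{MR}; the only genuinely new ingredient compared with Theorem~\ref{orbital integral Model 1-4} is the handling of the Fourier--Jacobi integration over $Y'$ and of the quadratic phase term $\pair{A,A}/2a^2$. Part (1) is formal: taking $\varphi'$ arbitrary and $\varphi$ supported, in the $\CJ$-variable, on a small ball $A_0+\varpi^m\CJ(\CO_F)$ avoiding $\pm I$, one checks exactly as in the first part of Theorem~\ref{orbital integral Model 1-4} that $I(a,\varphi,\varphi')\in C_{c}^{\infty}(F^\times)$; varying $A_0,m$ and the $a$-support and taking finite sums produces any prescribed $F\in C_{c}^{\infty}(F^\times)$.

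For Part (2), take $f\ast\varphi=\varphi_0$ the characteristic function of $Y(\CO_F)$ and $\varphi'=\varphi_0'$ that of $Y'(\CO_F)$. If $|a|>1$ then $\varphi_0(a,\cdot)\equiv0$, so $I(a,\varphi_0,\varphi_0')=0$. If $|a|=1$, then since the residue characteristic is odd all arguments of $\psi$ in the integrand lie in $\CO_F$, whence $I(a,\varphi_0,\varphi_0')=\vol(\CJ(\CO_F))\vol(Y'(\CO_F))=1$. For $|a|<1$, first integrate over $Y\in Y'(\CO_F)$: because $\pair{Y,A}$ is linear in $Y$ and pairs $Y$ with the coordinates $A_i$ of $A$ occurring in $\pair{Y,A}$, this integration equals $1$ when those coordinates lie in $a\CO_F$ and $0$ otherwise; on the surviving locus $\pair{A,A}\in a^2\CO_F$, so $\psi(-\pair{A,A}/2a^2)=1$ (again using $|2|=1$), and $I(a,\varphi_0,\varphi_0')$ reduces to $\int_{\CJ(\CO_F)}\psi(\frac{\tr(A)-N_\CJ(A)}{a})\,dA$ taken over the sublocus where those coordinates lie in $a\CO_F$. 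This restriction is harmless: in the change of variables of \cite[Theorem 2]{MR} those coordinates enter the phase only through products (divided by $a$) with coordinates ranging over $\CO_F$, and $\int_{a\CO_F}\int_{\CO_F}\psi(-xy/a)\,dx\,dy$ equals $\int_{\CO_F}\int_{\CO_F}\psi(-xy/a)\,dx\,dy$, so the argument of loc.\ cit.\ applies verbatim; hence the integral equals $|a|^{(n-1)/2}\int_{\CO_F^\times}\psi(\frac{x+x^{-1}}{a})\,dx$, which is precisely part (2) of Theorem~\ref{orbital integral Model 1-4} for the corresponding Jordan algebra. Finally, by the $a=0$ Weil-representation formulas \eqref{theta 10}--\eqref{theta 12}, $I^{\pm}(\varphi_0,\varphi_0')$ reduces to a product of hyperbolic Gaussian integrals of the form $\int_{\CO_F}\int_{\CO_F}\psi(2uv)\,du\,dv$, each equal to $1$ since $|2|=1$.

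For Part (3), write $\varphi$ and $\varphi'$ as finite sums of characteristic functions of small products of balls and treat each summand separately. If the $\CJ$-support of the $\varphi$-summand avoids $\{\pm I\}$, its contribution lies in $C_{c}^{\infty}(F^\times)$ by the argument of Theorem~\ref{orbital integral Model 1-4} (non-vanishing of a suitable partial derivative of $\tr(A)-N_\CJ(A)$), after a preliminary integration over $Y'$ that only shrinks the $a$-support. For a summand with $\varphi$ supported near $\{0\}\times(I+\varpi^M\CJ(\CO_F))$: integrate out $Y'$ (now tracking the measure constants, since $|2|$ need not be $1$), which forces the relevant coordinates of $A$ into a translate of $a\CO_F$ and converts $\psi(-\pair{A,A}/2a^2)$ into a Gauss sum contributing precisely the factor $|4|$; then apply the identity \eqref{basic equation 1}, in the variant with a factor $2$ inside $\psi$, together with the change of variables of \cite[Theorem 2]{MR}, to extract the $x+x^{-1}$ shape. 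This yields $|a|^{(1-n)/2}I(a,\varphi,\varphi')\in C_{OI}^{\infty}(F^\times,c_+,c_-)$ with $c_\pm=|4|\cdot I^{\pm}(\varphi,\varphi')$, the summand near $-I$ producing the $c_-$ term.

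I expect the main obstacle to be this last step: confirming that the interaction of the $Y'$-integration with the quadratic phase $\pair{A,A}/2a^2$ contributes exactly the factor $|4|=|2|^2$ — and not some other power of $|2|$ — when the residue characteristic is $2$, and checking carefully that the neighbourhood constraints imposed on the coordinates of $A$ by the $Y'$-integration remain compatible with, and do not alter the output of, the \cite[Theorem 2]{MR} change of variables reducing the Jordan-algebra integral to the one-dimensional $x+x^{-1}$ integral. Once these two compatibilities are in place, the reduction to \cite[Theorem 2]{MR} and Proposition~\ref{orbital integral SL(2)} is routine.
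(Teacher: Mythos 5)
The proposal follows the paper's overall strategy (integrate first over $Y'$, absorb the quadratic phase, reduce to the Jordan-algebra integral of \cite[Theorem~2]{MR}, track the Gauss factor $|4|$), and Parts (1) and (2) are essentially in agreement with what the paper does. But there is a genuine gap in your treatment of Part (3): your dichotomy is wrong.

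You split the analysis into ``$\CJ$-support of the $\varphi$-summand avoids $\{\pm I\}$'' (claiming the contribution lies in $C_c^\infty(F^\times)$) versus ``support near $\{0\}\times(\pm I+\varpi^M\CJ(\CO_F))$''. This is not the right partition. The singular-term unfolding earlier in the section produces constants $I^{\pm}(f\ast\varphi,\varphi')$ that are integrals over the sets $\CJ^{\pm}(k_v)$, which are $4$-dimensional affine subspaces of $\CJ$ (the loci $A_i=0$ for $i>1$, $A_{ij}=0$ for $i,j>1$ with $(i,j)\neq(3,4),(2,5)$, and $A_1=A_{25}=A_{34}=\pm1$, with the $A_{1i}$ for $2\le i\le 5$ free), not just the two points $\pm I$. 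Consequently, if $\varphi$ is supported on $\{0\}\times\CU$ for a small ball $\CU$ around a point $A_0\in\CJ^+\setminus\{I\}$ (say with some $A_{1i,0}\neq 0$), then in general $I^+(\varphi,\varphi')\neq 0$, so by the very statement of Part~(3) the renormalized orbital integral $|a|^{(1-n)/2}I(a,\varphi,\varphi')$ has a nonzero ``constant'' term near $a=0$ and hence is \emph{not} in $C_c^\infty(F^\times)$ — contradicting your Case~A claim. The correct condition for the compactly-supported case is that the ball $A_0+\varpi^m\CJ(\CO_F)$ avoid $\CJ^+(F)\cup\CJ^-(F)$, and the nontrivial computation must cover all $A_0\in\CJ^\pm$, not merely $A_0=\pm I$. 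The distinction matters precisely because the $Y'$-integration only constrains the coordinates $A_i$ ($2\le i\le 5$), while the $A_{1i}$ coordinates, which parametrize $\CJ^\pm$, remain unconstrained and feed into both the $x+x^{-1}$ reduction and the singular term.

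Two smaller remarks. In the case-avoids argument, the correct criterion for vanishing (used in the paper) is not merely that some partial $P_\gamma(A)$ of $\tr(A)-N_\CJ(A)$ be bounded below, but that one may choose such a $\gamma$ among the coordinates $x_\gamma$ that are \emph{not} of the form $A_i$ with $2\le i\le 5$; otherwise the corresponding variable is already pinned to $a\CO_F$-translates by the $Y'$-integration and cannot be used to kill the oscillatory integral. Your proposal does not record this constraint. Finally, your uncertainty about whether the Gauss factor is exactly $|4|$ is warranted, but the computation is short and does come out to $|4|=|2|^2$: each of the two two-dimensional integrals $\int\int\psi(-uv/2)\,du\,dv$ over sufficiently large balls contributes $|2|$, and the paper carries this out explicitly when $A_0\in\CJ^+$.
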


We will only consider the case when $\hat{G}=\Spin_{11}$, the case when $\hat{G}=S(\GSpin_7\times \GL_2)$ follows from a similar and easier argument. In this case $n=\dim(\CJ)=15$. The first part is trivial. For the second part, the identity $I^+(\varphi,\varphi')=I^-(\varphi,\varphi')=1$ is trivial. Also it is clear that $I(a,\varphi,\varphi')=0$ if $|a|>1$ and $I(a,\varphi,\varphi')=0$ if $|a|=1$. So, it remains to show that 
$$I(a,\varphi,\varphi')=|a|^{(n-1)/2}\int_{\CO_{F}^{\times}}\psi(\frac{x+x^{-1}}{a})\ dx$$
if $|a|\leq 1$. With the same notation as in \cite{MR}, we can write $A$ as $\Pi_{\Gamma_0\in \Pi_0'} x_\gamma$. We can also decompose $\Pi_0'$ as $\{\alpha+\beta\}\cup \Pi'\cup \{\gamma_1,\cdots,\gamma_r\}\cup \{\delta_1,\cdots,\delta_r\}$. Under our notation above, we have 
$$x_{\alpha+\beta}=A_1,\;x_{\gamma_1}=A_{25},x_{\gamma_2}=A_{24},x_{\gamma_3}=A_{23},x_{\delta_1}=A_{34},x_{\delta_2}=A_{35},x_{\delta_3}=A_{45}$$
and $\{x_{\gamma}|\;\gamma\in \Pi'\}$ is just $\{A_{i},A_{1i}|\;2\leq i\leq 5\}$.

Assume that the residue characteristic of $F$ is not 2. Then we have 
\begin{eqnarray*}
I(a,\varphi,\varphi')&=&\int_{\CJ(\CO_F)}\int_{Y'(\CO_F)}\psi(\frac{\tr(A)-N_\CJ(A)-\pair{Y,A}}{a}-\frac{\pair{A,A}}{2a^2}) \ dY\ dA\\
&=&\int_{\CJ(\CO_F)_a}\psi(\frac{\tr(A)-N_\CJ(A)}{a}-\frac{\pair{A,A}}{2a^2}) \ dA\\
&=&\int_{\CJ(\CO_F)_a}\psi(\frac{\tr(A)-N_\CJ(A)}{a}) \ dY\ dA
\end{eqnarray*}
where $\CJ(\CO_F)_a=\{A\in \CO_F|\; A_i\in a\CO_F,\;\forall 2\leq i\leq 5\}$. Then we can use the same argument as in the proof of Theorem 2 of \cite{MR} to finish the proof. The only difference is that in the last paragraph of Section 4 of \cite{MR}, instead of using the identity in Lemma 4 of loc. cit., we use the following identity whose proof is trivial. 

\begin{equation}
\int_{\varpi^m\CO_F}\int_{\CO_F}\psi(bxy)\ dx\ dy=|b|^{-1},\;\forall |b|>q^m,m>0.
\end{equation}

It remains to prove the last part. We may assume that $\psi$ is unramified. We only need to consider the case when $\varphi$ (resp. $\varphi'$) is the product of the characteristic function of $\varpi^m\CO_F$ and the characteristic function of $A_0+\varpi^m\CJ(\CO_F)$ (resp. the characteristic function of $Y_0+\varpi^m Y'(\CO_F)$). We first show that 

\begin{enumerate}
    \item[(4)] if the support $A_0+\varpi^m\CJ(\CO_F)$ has no intersection with $\CJ^+(F)\cup \CJ^-(F)$, then the function $a\mapsto I(a,\varphi,\varphi')$ belongs to $C_{c}^{\infty}(F^\times)$.
\end{enumerate}

 In this case, by choosing $m$ large enough we may assume that $A\times A\notin \CJ^+(F)$ for all $A\in A_0+\varpi^m\CJ(\CO_F)$. As in the previous case, for each $\gamma\in \Pi_0'$, we can write $\tr(A)-N_\CJ(A)$ as $x_\gamma P_\gamma(A)+Q_\gamma(A)$ where $P_\gamma(A)$ and $Q_\gamma(A)$ are polynomials in $\{x_{\gamma'}|\;\gamma'\neq \gamma\}$. Since $A\times A\notin \CJ^+(F)$ for all $A\in A_0+\varpi^m\CJ(\CO_F)$, there exists $\gamma\in \Pi_0'$ and $C>0$ such that $|P_\gamma(A)|>C$ for all $A\in A_0+\varpi^m\CJ(\CO_F)$ and $x_\gamma\neq A_i$ for $2\leq i\leq 5$. This implies that for $a$ small enough, the orbital integral $I(a,\varphi)$ is equal to 0 (note that $\pair{Y,A}$ and $\pair{A,A}$ only depends on $Y_i$ and $A_i$ for $2\leq i\leq 5$). This proves (4).

Hence, it is enough to consider the case when $A_0\in \CJ^+(F)$ or $A_0\in \CJ^-(F)$. The arguments for both cases are the same, so we may just assume that $A_0\in \CJ^+(F)$. In this case, we may write $Y_0$ and $A_0$ as
$$Y_0=\sum_{2\leq i\leq 5}Y_{i,0}X_{e_1-e_i},\;A_0=1+\sum_{2\leq i\leq 5}A_{1i,0}X_{e_1+e_i}.$$
Also by (4), we can replace the test function $\varpi$ by its product with the characteristic function $\textbf{1}_{1+\varpi^{2m}\CO_F}(A_1)$ of $A_1$.

By the same argument as in the proof of Theorem 2 of \cite{MR} (except that we replace Lemma 4 of loc. cit. by \eqref{basic equation 1}), we have the following identity, which is an analog of equation (41) of loc. cit. \footnote{In our current case, we not only have the character $\psi(\frac{\tr(A)-N(A)}{a})$ as in \cite{MR}, but we also have the extra part $\psi(\frac{-\pair{Y,A}}{a}-\frac{\pair{A,A}}{2a^2})$, this is why the argument in loc. cit. can only help us calculate the integrals over $A_{ij}$ for $2\leq i,j\leq 5$.}
\begin{eqnarray*}
	I(a,\varphi,\varphi')&=&|a|^{3}\int_{(\varpi^m\CO_F)^4}\int_{\Pi_{2\leq i\leq 5}A_{1i,0}+\varpi^m\CO_F}\int_{\Pi_{2\leq i\leq 5}Y_{i,0}+\varpi^m\CO_F}\int_{1+\varpi^{2m}\CO_F}\\ 
&&\psi(\frac{x+\frac{1}{x}}{a}+\frac{\sum_{2\leq i\leq 5}Y_iA_i}{a}-\frac{\sum_{2\leq i\leq 5}A_{7-i}A_{1i}}{ax}-\frac{A_2A_5+A_3A_4}{2a^2}) \   dx\Pi_{2\leq i\leq 5}\ dY_i\ dA_{1i}\ dA_i.
\end{eqnarray*}

We can first compute the integral over $Y_i$. We have
$$\int_{\Pi_{2\leq i\leq 5}Y_{i,0}+\varpi^m\CO_F} \psi(\frac{\sum_{2\leq i\leq 5}Y_iA_i}{a})\Pi_{2\leq i\leq 5}\ dY_i=q^{-4m}\cdot \textbf{1}_{(a\varpi^{-m}\CO_F)^4}(A_2,A_3,A_4,A_5)\psi(\frac{\sum_{2\leq i\leq 5} Y_{i,0}A_i}{a})$$
once we let $a$ be small enough with respect to $\varpi^m$. This implies that
\begin{eqnarray*}
	I(a,\varphi,\varphi')&=&|a|^{3}q^{-4m}\cdot\int_{(a\varpi^{-m}\CO_F)^4} \int_{\Pi_{2\leq i\leq 5}A_{1i,0}+\varpi^m\CO_F}\int_{1+\varpi^{2m}\CO_F}\\
&&\psi(\frac{x+\frac{1}{x}}{a}+\frac{\sum_{2\leq i\leq 5} Y_{i,0}A_i}{a}-\frac{\sum_{2\leq i\leq 5}A_{7-i}A_{1i}}{ax}-\frac{A_2A_5+A_3A_4}{2a^2})  \  dx\Pi_{2\leq i\leq 5}\ dA_{1i}\ dA_i\\
&=&|a|^{7}q^{-4m}\cdot\int_{(\varpi^{-m}\CO_F)^4} \int_{\Pi_{2\leq i\leq 5}A_{1i,0}+\varpi^m\CO_F}\int_{1+\varpi^{2m}\CO_F}\\
&&\psi(\frac{x+\frac{1}{x}}{a}+\sum_{2\leq i\leq 5} Y_{i,0}A_i-\frac{\sum_{2\leq i\leq 5}A_{7-i}A_{1i}}{x}-\frac{A_2A_5+A_3A_4}{2})  \  dx\Pi_{2\leq i\leq 5}\ dA_{1i}\ dA_i.
\end{eqnarray*}
Then we can compute the integral over $A_{1i}$. We have (note that $x\in 1+\varpi^{2m}\CO_F$ and $A_i\in \varpi^{-m}\CO_F$)
$$\int_{\Pi_{2\leq i\leq 5}A_{1i,0}+\varpi^m\CO_F} \psi(-\frac{\sum_{2\leq i\leq 5}A_{7-i}A_{1i}}{x}) \Pi_{2\leq i\leq 5}\ dA_{1i}=q^{-4m}\cdot \psi(-\frac{\sum_{2\leq i\leq 5}A_{7-i}A_{1i,0}}{x})$$
once we choose $m$ large enough with respect to $A_{1i,0}$. This implies that 
\begin{eqnarray*}
	I(a,\varphi,\varphi')&=&|a|^{7}q^{-8m}\cdot\int_{(\varpi^{-m}\CO_F)^4} \int_{1+\varpi^{2m}\CO_F}\\
&&\psi(\frac{x+\frac{1}{x}}{a}+\sum_{2\leq i\leq 5} Y_{i,0}A_i-\frac{\sum_{2\leq i\leq 5}A_{7-i}A_{1i,0}}{x}-\frac{A_2A_5+A_3A_4}{2})  \  dx\Pi_{2\leq i\leq 5}\ dA_i.
\end{eqnarray*}

Once we choose $m$ large enough (with respect to $A_{1i,0}$), we know that 
$$\psi(\frac{\sum_{2\leq i\leq 5}A_{7-i}A_{1i,0}}{x})=\psi(\sum_{2\leq i\leq 5}A_{7-i}A_{1i,0})$$
for all $A_i\in \varpi^{-m}\CO_F$. Hence we have
\begin{eqnarray*}
I(a,\varphi,\varphi')&=&|a|^{7}q^{-8m}\cdot \int_{1+\varpi^{2m}\CO_F}\psi(\frac{x+\frac{1}{x}}{a})\ dx\\
&&\times \int_{(\varpi^{-m}\CO_F)^4} 
\psi(\sum_{2\leq i\leq 5} (Y_{i,0}A_i-A_{7-i}A_{1i,0})-\frac{A_2A_5+A_3A_4}{2})  \Pi_{2\leq i\leq 5}dA_i\\
&=&|a|^{7}q^{-8m}\cdot \psi(2(Y_{2,0}-A_{15,0})(Y_{5,0}-A_{12,0})+2(Y_{3,0}-A_{14,0})(Y_{4,0}-A_{13,0}))\\
&&\times \int_{1+\varpi^{2m}\CO_F}\psi(\frac{x+\frac{1}{x}}{a})\ dx\cdot  \int_{(\varpi^{-m}\CO_F)^4} 
\psi(-\frac{A_2A_5+A_3A_4}{2})    \Pi_{2\leq i\leq 5}\ dA_i\\
&=&|a|^{7}q^{-8m}|4|\cdot \psi(2(Y_{2,0}-A_{15,0})(Y_{5,0}-A_{12,0})+2(Y_{3,0}-A_{14,0})(Y_{4,0}-A_{13,0}))\\
&&\times\int_{1+\varpi^{2m}\CO_F}\psi(\frac{x+\frac{1}{x}}{a})\ dx.
\end{eqnarray*}
Hence, it remains to show that 
$$I^+(\varphi,\varphi')=q^{-8m}\cdot \psi(2(Y_{2,0}-A_{15,0})(Y_{5,0}-A_{12,0})+2(Y_{3,0}-A_{14,0})(Y_{4,0}-A_{13,0})).$$

From the definition, we know that 
\begin{eqnarray*}I^+(\varphi,\varphi')&=&\int_{\Pi_{2\leq i\leq 5}A_{1i,0}+\varpi^m\CO_F}\int_{\Pi_{2\leq i\leq 5}Y_{i,0}+\varpi^m\CO_F}\\
&&\psi(2(A_{12}-Y_5)(A_{15}-Y_2)+2(A_{13}-Y_4)(A_{14}-Y_3))\Pi_{2\leq i\leq 5}\ dY_i\ dA_{1i}.\end{eqnarray*}
Once we choose $m$ large enough (with respect to $A_{1i,0}, Y_{i,0}$), we know that the integrand is just a constant equal to 
$$\psi(2(Y_{2,0}-A_{15,0})(Y_{5,0}-A_{12,0})+2(Y_{3,0}-A_{14,0})(Y_{4,0}-A_{13,0})).$$
This finishes the proof of the theorem.

\section{More examples of Conjecture \ref{conj Whittaker general}} \label{sec orbitexample}

In this section, we discuss two more examples of Conjecture \ref{conj Whittaker general}. In the first example the set $\{\tau_j\oplus \tau_{j}^{\vee}\}$ in the decomposition \ref{decomposition of odd 1} is not empty; in the second example  $\hat{\CO}_\iota$ is not special.

\subsection{An example for $\GL_n$}
In this subsection we consider the case when $G=\hat{G}=\GL_n$ ($n>2$) and $\hat{\CO}_\iota$ is the nilpotent of $\hat{G}$ with partition $(n-2,1,1)$, i.e. it is a principal orbit in the Levi subgroup $\GL_{n-2}\times \GL_1\times \GL_1$. In this case its dual nilpotent orbit $\CO_\iota$ has partition $(3,1,1,1,\cdots,1)$ and the degenerate Whittaker period $\CP_\iota$ is given by 
$$\CP_\iota(\phi)=\int_{U(k)\back U(\BA)}\phi(u)\mu(u)du$$
where $U$ and $\mu$ are defined in (2.31) of \cite{MR4}. When $n=3$, this is just the Whittaker period. For the rest of this subsection, we assume that $n>3$.

In this case, the centralizer of $Im(\hat{\iota})$ in $\hat{G}$ is $\hat{H}'=\GL_2\times \GL_1$ and the adjoint action of $\hat{G}\times \SL_2$ on $\hat{\Fg}$ can be decomposed as (for simplicity we will ignore the $\GL_1$ part of $\hat{H}$)
$$\hat{\Fg}=(Ad\otimes \wedge^2)\oplus (std\otimes Sym^{n-3})\oplus (std^\vee\otimes Sym^{n-3})\oplus (\wedge^2\otimes \tau)$$
where $\tau$ is some representation of $\SL_2$. Since the L-value associated to $\wedge^2\otimes \tau$ is just some zeta factors we will ignore it. In this case, the decomposition \eqref{decomposition of odd 1} becomes
$$\oplus_{k\in \hat{I}_{odd}}\hat{\rho_k}=(std\oplus std^\vee)$$
if $n$ is even and 
$$\oplus_{k\in \hat{I}_{odd}}\hat{\rho_k}=0$$
if $n$ is odd. In particular, this is an example for which the set $\{\tau_j\oplus \tau_{j}^{\vee}\}$ in the decomposition \ref{decomposition of odd 1} is not empty. In both cases we have
$\rho_{\hat{\iota}}=0$.

Let $H=\GL_{n-1}=\{\begin{pmatrix}h&0\\0&1 \end{pmatrix}\}$ be a closed subgroup of $G$. By the unramified computation in \cite{S1} and the computation of dual group in \cite{KS1}, the quadruples $(G,H,0,1)$ and $(\hat{G},\hat{H}',0,\hat{\iota})$ are dual to each other. Since $\rho_{\hat{\iota}}=0$, we know that the quadruple $(G,H,0,1)$ satisfies Assumption \ref{assumption 1}. In this case, the relative trace formula comparison in Conjecture \ref{conj RTF} recovers the one in (2.34) of \cite{MR4}. In principle the ideas of \cite{MR4} can establish this comparison (and we will do so in a future paper). This comparison serves as strong evidence of Conjecture \ref{conj Whittaker general} for the period $\CP_\iota$.

\subsection{An example of Conjecture \ref{conj Whittaker general} for non-special nilpotent orbit}\label{sec non special}
In this section, we will provide an example of Conjecture \ref{conj Whittaker general} in the case when $\hat{\CO}_\iota$ is not special. The case we are considering is when $G=\Sp_{4n}$ and $\hat{G}=\SO_{4n+1}$. Let ${e_i-e_j,e_i+e_j,e_k}$ (resp. ${e_i-e_j,e_i+e_j,2e_k}$) be the roots of $\SO_{4n+1}$ (resp. $\Sp_{2n}$) with $1\leq i,j,k\leq 2n$ and $i\neq j$. Let $P=MN$ (resp. $\hat{P}=\hat{M}\hat{N}$) be the standard parabolic subgroup of $\Sp_{4n}$ (resp. $\SO_{4n+1}$) associated to the simple roots $e_{2i-1}-e_{2i}$ for $1\leq i\leq n$. In particular we have $M \simeq \hat{M}\simeq (\GL_2)^n$.

Let  $\hat{\CO}_\iota$ be the principal nilpotent orbit of $\hat{\Fm}$, and we also view it as a nilpotent orbit of $\hat{\Fg}$. It is clear that it is not a special nilpotent orbit of $\Fg$. The nilpotent orbit $\hat{\CO}_\iota$ induces a homomorphism $\hat{\iota}:\SL_2\rightarrow \hat{G}$. Let $\hat{H}$ be the neutral component of the centralizer of $Im(\hat{\iota})$ in $\hat{G}$. It is easy to see that in this case we have $\hat{H}=\Sp_{2n}$. We get a homomorphism $\hat{H}'\times \SL_2\rightarrow \hat{G}$ which will still be denoted by $\hat{\iota}$. This map induces an adjoint action of $\hat{H}\times \SL_2$ on $\hat{\Fg}$, and it is easy to see that we can decompose it as
$$Ad\otimes Sym^0\oplus std\otimes Sym^1\oplus \wedge^2\otimes Sym^2$$
where $Ad$ is the adjoint representation of $\Sp_{2n}$, $std$ is the standard representation of $\Sp_{2n}$ and $\wedge^2$ is the exterior square representation. In particular, in this case, we have $\rho_{\hat{\iota}}=std$.

Let $\CO_\iota$ be the dual of $\hat{\CO}_\iota$. It is easy to see that $\CO_\iota$ is the Richardson nilpotent orbit associated to parabolic subgroup $P$ of $G$. Moreover, it induces a character $\xi$ of $N(k)\back N(\BA)$ and we can define the degenerate Whittaker period to be 
$$\CP_\iota(\phi)=\int_{N(k)\back N(\BA)}\phi(n)\xi(n)dn.$$
In this case, Conjecture \ref{conj Whittaker general} becomes the following conjecture.

\begin{conj}\label{conj Whittaker Sp(4n)}
Let $\pi$ be an irreducible discrete automorphic representation of $\hat{G}(\BA)$ and let $\nu:\pi\rightarrow L^2(\hat{G}(k)\back \hat{G}(\BA))_{\pi}$ be an embedding. Assume that the Arthur parameter of $\pi$ factors through $\hat{\iota}$ (i.e. $\pi$ is a lifting of a global tempered Arthur packet $\Pi$ of $H(\BA)=\SO_{2n+1}(\BA)$). Then one can choose the embedding $\nu$ so that 
$$\frac{|\CP_\iota(\phi)|^2}{\pair{\phi,\phi}}``=" \frac{L(1/2,\Pi)}{L(1,\Pi,Ad)L(3/2,\Pi)L(2,\Pi,\wedge^2)},\;\phi\in \nu(\pi).$$
\end{conj}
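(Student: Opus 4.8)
The plan is to realize $\pi$ concretely as a residual representation and to reduce the conjectured identity to an Eisenstein–series unfolding. Because the Arthur parameter of $\pi$ on $\Sp_{4n}(\BA)$ factors through $\hat\iota$ with Arthur $\SL_2$ equal to $\hat\iota$, it has the shape $\tau\boxtimes[2]\boxplus[1]$, where $\tau$ is the isobaric automorphic representation of $\GL_{2n}(\BA)$ functorially attached to the (generic, say cuspidal) member of $\Pi$; thus $\tau$ is self-dual of symplectic type, so $L(s,\tau,\wedge^2)$ has a pole at $s=1$, and $L(s,\tau)=L(s,\Pi)$, $L(s,\tau,\Sym^2)=L(s,\Pi,Ad)$. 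The first step is to identify $\pi$ with the square–integrable residue at $s=\tfrac12$ of the Siegel Eisenstein series $E(g,s;f_s)$ on $\Sp_{4n}(\BA)$ induced from $\tau|\det|^s$ on the Levi $\GL_{2n}$ of the Siegel parabolic $P_0$; this residue is governed by the factor $L(2s,\tau,\wedge^2)$ in the constant term, and the known description of the residual spectrum of $\Sp_{4n}$ (via the analysis of M{\oe}glin and others) together with the classification of Arthur packets should yield both this identification and the fact that every vector in the $\pi$–isotypic part of $L^2$ is such a residue — so that it suffices to evaluate $\CP_\iota$ on residues of Eisenstein series.

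The core of the argument is then the computation of
$$\CP_\iota(\phi)=\Res_{s=1/2}\int_{N(k)\backslash N(\BA)}E(n,s;f_s)\,\xi(n)\,dn ,$$
obtained by interchanging the residue with the integral (which requires justifying the continuation and convergence) and unfolding $E$ over $P_0(k)\backslash\Sp_{4n}(k)$. The integral reduces to a sum over the relevant $P_0(k)$–$N(k)$ double cosets, and one must show that every non-open orbit contributes $0$ — because $\xi$ restricts non-trivially to a unipotent subgroup fixed along it — while the unique open orbit gives a single degenerate Jacquet integral. This is precisely the kind of ``Fourier coefficient of a Siegel Eisenstein series'' computation that underlies the descent method of Ginzburg--Rallis--Soudry: for unramified data the local integrals should evaluate, via a generalized Casselman--Shalika formula, to a ratio of the standard and exterior–square $L$–functions of $\tau$ times a Whittaker integral of $\tau$, and taking $\Res_{s=1/2}$ — the simple pole coming from $L(2s,\tau,\wedge^2)=L(1,\tau,\wedge^2)$, which is present exactly because $\tau$ is symplectic — collapses this to a global constant times $\dfrac{L(1/2,\Pi)}{L(3/2,\Pi)\,L(2,\Pi,\wedge^2)}$ times the Whittaker period of $\Pi$ (the Whittaker integral of $\tau$ being identified with that of $\Pi$ through the descent).

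In parallel one computes $\pair{\phi,\phi}$ on the residual representation by the Langlands inner–product formula for residues of Eisenstein series: it is the leading Laurent coefficient of the standard intertwining operator at $s=\tfrac12$ — again $\dfrac{L(1/2,\Pi)}{L(3/2,\Pi)L(2,\Pi,\wedge^2)}$ up to constants — times the inner product of the sections $f_{1/2}$, which is (a local pairing times) the Petersson norm of $\tau$; through the descent and the Lapid--Mao formula $|\mathcal W_\Pi|^2/\pair{\cdot,\cdot}``="1/L(1,\Pi,Ad)$ on $\SO_{2n+1}$ this norm carries the adjoint $L$–value. Assembling the two computations, the factor $L(1/2,\Pi)/(L(3/2,\Pi)L(2,\Pi,\wedge^2))$ occurs squared in $|\CP_\iota(\phi)|^2$ and to the first power in $\pair{\phi,\phi}$, hence to the first power in the quotient, while the squared Whittaker period of $\Pi$ in the numerator and the Petersson normalization in the denominator combine, via Lapid--Mao, to produce the single factor $1/L(1,\Pi,Ad)$; after absorbing all Dedekind zeta factors, ramified local factors and global constants (in the sense of the convention of Conjecture \ref{conj Whittaker general}), this gives $|\CP_\iota(\phi)|^2/\pair{\phi,\phi}``="L(1/2,\Pi)\big/\big(L(1,\Pi,Ad)L(3/2,\Pi)L(2,\Pi,\wedge^2)\big)$. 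A relative trace formula comparison in the style of \S\ref{sec RTF}--\S\ref{sec RTF2} — pairing the Kuznetsov-type relative trace formula for $\CP_\iota$ on $\Sp_{4n}$ with one on $\SO_{2n+1}$ whose spectral side is this ratio — would be an alternative, but since $\hat\CO_\iota$ is non–special there is no hyperspherical variety to single out the dual side, so I expect the Eisenstein route above to be more tractable.

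The main obstacle is the unfolding of the degenerate Whittaker coefficient of the Siegel Eisenstein series: identifying exactly which Weyl–orbit survives, proving the vanishing of all the others, and carrying out the unramified local integrals so that they produce precisely the $L$–factors above — with a single simple pole at $s=\tfrac12$ and no extraneous factors — together with the interchange of residue and integration. A second, essentially definitional, difficulty is the one flagged in the remark after Conjecture \ref{conj Whittaker general}: the degenerate Whittaker model here is not unique, so upgrading the approximate identity to a genuine one first requires a correct choice of local relative characters at the ramified places, and that choice must be shown to be compatible with the Eisenstein–series computation at the unramified places.
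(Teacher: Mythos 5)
Conjecture~\ref{conj Whittaker Sp(4n)} is stated as a conjecture and the paper does not prove it; what the paper offers is a conditional reduction, and your plan takes a genuinely different route toward the same identity. The paper's route: compare the Friedberg--Jacquet linear period trace formula $I(f')$ on $\GL_{2n}$, whose discrete spectral expansion is \eqref{spectral expansion 3}, $\sum_\Pi L(1/2,\Pi)/L(1,\Pi,Ad)$, against the trace formula $J(f)$ on $\Sp_{4n}$ pairing the $\Sp_{2n}\times\Sp_{2n}$ period $\CP_{G'}$ with $\CP_\iota$; granting Conjecture~\ref{conj Sp(4n),Sp(2n)} for $\CP_{G'}$, the sought comparison (Conjecture~\ref{conj Sp(2n) RTF}) forces the ratio in the statement, and the paper supports that comparison by citing Theorem~2.1 of \cite{MR3}, which already matches $I(f')$ with $J'(f)$ (i.e. with $\CP_{U,\xi'}$ in place of $\CP_\iota$), together with Proposition~\ref{prop model transit}, which shows $\CP_\iota``="\CP_{U,\xi'}$ using the Ginzburg--Rallis--Soudry identity \cite[(5.18)]{GRS3} and the vanishing of \cite[Theorem~16]{GRS2}. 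Your Eisenstein--residue plan — realize $\pi$ as the residue at $s=\tfrac12$ of the Siegel Eisenstein series from $\tau|\det|^s$, unfold the degenerate Whittaker coefficient and take the residue, compute $\pair{\phi,\phi}$ by Langlands' inner product formula, and feed in Lapid--Mao — is different in kind: if carried out it would be an outright computation rather than a reduction to another conjecture, which is a real advantage, though the hard steps you flag (determining the surviving Weyl orbit, justifying the residue/integral interchange, and the ramified normalization, since the degenerate Whittaker functional here is not unique) are exactly the crux. Two corrections of emphasis. First, your reason for discarding the RTF route — that the non-special $\hat{\CO}_\iota$ leaves no hyperspherical dual to compare against — misses what the paper actually does: it compares against the $(\GL_{2n},\GL_n\times\GL_n)$ linear period trace formula, not a dual hyperspherical variety, and that comparison is already in \cite{MR3}. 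Second, the two routes are closer than they look, since Proposition~\ref{prop model transit} is itself built out of the Ginzburg--Rallis--Soudry analysis of Fourier coefficients of precisely the residual Eisenstein series you propose to work with; the paper outsources the Eisenstein unfolding to \cite{GRS2,GRS3} and then leverages an existing RTF, whereas you propose to redo the unfolding from scratch and push it all the way to the $L$-factors.
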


Next, we define another unipotent period integral on $\Sp_{4n}$. Let $B=TU$ be the standard upper triangular Borel subgroup of $\Sp_{4n}$, and we define a character $\xi'$ on $U(k)\back U(\BA)$ to be (note that this is not a generic character)
$$\xi'(u)=\psi(u_{12}+u_{23}+\cdots+u_{2n-1\;2n}),\;u=(u_{ij})_{1\leq i,j\leq 4n}.$$
Then we define the period integral 
$$\CP_{U,\xi'}(\phi)=\int_{U(k)\back U(\BA)}\phi(u)\xi'(u)du.$$

Lastly, let $G'=\Sp_{2n}\times \Sp_{2n}$ be a closed subgroup of $G$ and we define the period integral
$$\CP_{G'}(\phi)=\int_{G'(k)\back G'(\BA)}\phi(h)dh.$$
By the unramified computation in \cite{S1} and the computation of dual group in \cite{KS1}, we know that the quadruples
$$(G,G',0,1),\;(\hat{G},\hat{H},std,\hat{\iota})$$
are dual to each other under the BZSV duality. Hence, for the period integral $\CP_{G'}$, Conjecture \ref{BSV conj} becomes the following conjecture.

\begin{conj}\label{conj Sp(4n),Sp(2n)}
Let $\pi$ be an irreducible discrete automorphic representation of $G(\BA)$ and let $\nu:\pi\rightarrow L^2(G(k)\back G(\BA))_{\pi}$ be an embedding. Then the period integral $\CP_{G'}(\phi)$ is nonzero for some $\phi\in Im(\nu)$ only if the Arthur parameter of $\pi$ factors through $\hat{\iota}:\hat{H}(\BC)\times \SL_2(\BC)\rightarrow \hat{G}(\BC)$. If this is the case, $\pi$ is a lifting of a global tempered Arthur packet $\Pi$ of $H(\BA)=\SO_{2n+1}(\BA)$. Then we can choose the embedding $\nu$ so that 
$$\frac{|\CP_{G'}(\phi)|^2}{\pair{\phi,\phi}}``="  \frac{L(1/2,\Pi)L(3/2,\Pi)L(2,\Pi,\wedge^2)}{L(1,\Pi,Ad)},\;\phi\in Im(\nu).$$
\end{conj}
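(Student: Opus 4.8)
The statement is the specialization of part~(1) of Conjecture~\ref{BSV conj} to the pair of BZSV-dual quadruples $\Delta=(\Sp_{4n},\Sp_{2n}\times\Sp_{2n},0,1)$ and $\hat\Delta=(\SO_{4n+1},\Sp_{2n},std,\hat\iota)$, so the first thing to do is to record why the displayed right-hand side has this shape. Granting the duality of the two quadruples (the unramified and dual-group computations of \cite{S1,KS1}, or a direct check that the associated Hamiltonian space is hyperspherical and the relevant representation anomaly free), the bookkeeping is: $H'$ is the Langlands dual of $\hat H'=\Sp_{2n}$, so $H'=\SO_{2n+1}$ and $\Pi$ is a packet of $\SO_{2n+1}(\BA)$; the adjoint action of $\Sp_{2n}\times\SL_2$ on $\hat\Fg=\Lie(\SO_{4n+1})$ decomposes, by a dimension count and the structure of the principal $\SL_2$ of the Levi $(\GL_2)^n$, as $Ad\otimes Sym^0\oplus std\otimes Sym^1\oplus\wedge^2\otimes Sym^2$, so $\hat I=\{0,1,2\}$ and $\prod_{k\in\hat I}L(k/2+1,\Pi,\hat\rho_k)=L(1,\Pi,Ad)\,L(3/2,\Pi)\,L(2,\Pi,\wedge^2)$; and the representation $\rho_{\check H}$ in the numerator of Conjecture~\ref{BSV conj}(1) is the symplectic representation $\rho_{\hat H'}=std$ of the dual quadruple, the $2n$-dimensional standard representation of $\Sp_{2n}$, so $L(1/2,\Pi,\rho_{\check H})=L(1/2,\Pi)$. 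Plugging these into $L(1/2,\Pi,\rho_{\check H})\cdot\prod_{k\in\hat I}L(k/2+1,\Pi,\hat\rho_k)\big/L(1,\Pi,Ad)^2$ and cancelling one factor of $L(1,\Pi,Ad)$ yields the claimed identity.

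To prove the identity unconditionally I would follow the relative-trace-formula philosophy of \S\ref{sec RTF}--\S\ref{sec RTF2}, but now for the rank-$n$ (rather than rank-one) spherical variety $\Sp_{4n}/(\Sp_{2n}\times\Sp_{2n})$. Since the dual quadruple has $\rho_{\hat H'}=std\neq 0$, Assumption~\ref{assumption 1} fails and one is in the situation of Conjecture~\ref{conj RTF rank 1}. One side of the comparison is the relative trace formula on $\Sp_{4n}$ obtained by integrating the kernel $K_f$ against $\CP_{G'}$ in one variable and against one of the degenerate Whittaker periods of \S\ref{sec non special} — the period $\CP_\iota$ attached to the orbit dual to $\hat\iota$, with the auxiliary unipotent period $\CP_{U,\xi'}$ entering after a further unfolding — in the other; by Conjecture~\ref{conj Whittaker Sp(4n)} together with the present statement its discrete part should be $\sum_\Pi L(1/2,\Pi)/L(1,\Pi,Ad)$, the sum over discrete generic packets of $\SO_{2n+1}(\BA)$ that lift to $\Sp_{4n}$. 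The other side should be a Jacquet/Kuznetsov-type relative trace formula on $H'=\SO_{2n+1}$ (a Whittaker period against a suitable spherical period) with the same discrete expansion; for $n=1$ this is Jacquet's $\PGL_2=\SO_3$ trace formula, and the comparison reduces to the $(\Sp_4,\Sp_2\times\Sp_2)$ case, which, as noted after Conjecture~\ref{conj RTF rank 1}, follows from \cite{MR3}. The proof is then completed by unfolding both sides into products of local orbital integrals and establishing a fundamental lemma for the unit elements together with smooth transfer in each direction.

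The main obstacle, relative to the rank-one models of \S4--\S5, is the rank-$n$ feature itself. The dual $\SL_2$ is principal in the Levi $(\GL_2)^n$, so unfolding the $\Sp_{4n}$-side distribution requires an iterated Fourier--Jacobi coefficient of ``length $n$'' attached to the Richardson orbit of $P=MN$, and the resulting orbital integrals are parametrized by an $n$-dimensional space, not by a single $a\in F^\times$ as in Theorems~\ref{orbital integral Model 1-4} and~\ref{orbital integral Model 5-6}. Proving the fundamental lemma and smooth transfer in that higher-rank setting — and first pinning down the correct spherical period on $\SO_{2n+1}$ and checking, for $n>1$, that the corresponding Jacquet-type relative trace formula has the expected discrete expansion, including the ramified local factors — is the genuinely hard step. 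I expect the block structure $M\simeq(\GL_2)^n$ to make an induction on $n$ feasible, reducing the higher-rank transfer to a product of rank-one transfers anchored at the known $n=1$ case.
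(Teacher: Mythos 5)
The statement is a conjecture, and the paper does not prove it; the paper merely records it as the specialization of Conjecture~\ref{BSV conj}(1) to the dual pair of quadruples $(G,G',0,1)=(\Sp_{4n},\Sp_{2n}\times\Sp_{2n},0,1)$ and $(\hat G,\hat H,\mathrm{std},\hat\iota)=(\SO_{4n+1},\Sp_{2n},\mathrm{std},\hat\iota)$, citing \cite{S1,KS1} for the duality. Your first paragraph carries out exactly that specialization, and your bookkeeping --- $\hat I=\{0,1,2\}$ with $\hat\rho_0=Ad$, $\hat\rho_1=\mathrm{std}$, $\hat\rho_2=\wedge^2$, and $\rho_{\hat H'}=\mathrm{std}$, then cancelling one factor of $L(1,\Pi,Ad)$ --- is correct and matches the paper's intent.

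Your second and third paragraphs propose an unconditional proof strategy that the paper does not pursue, and the route differs from what the paper does sketch nearby. The trace-formula comparison the paper introduces (Conjecture~\ref{conj Sp(2n) RTF}, backed by Theorem~2.1 of \cite{MR3} and Proposition~\ref{prop model transit}) is between $\Sp_{4n}$ and $\GL_{2n}$, with the Friedberg--Jacquet linear period \cite{FJ1} on $\GL_{2n}$ supplying the central value $L(1/2,\Pi)$; it is not a Kuznetsov-type formula on $\SO_{2n+1}$. You leave the ``suitable spherical period'' on $\SO_{2n+1}$ unspecified, and for $n>1$ there is no evident period on $\SO_{2n+1}$ itself isolating $L(1/2,\Pi)$ --- this is precisely why the paper routes through the linear period on $\GL_{2n}$ (using the lift $\SO_{2n+1}\to\GL_{2n}$). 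Moreover, the paper's logical direction is the opposite of yours: it uses Conjecture~\ref{conj Whittaker Sp(4n)} \emph{together with} the present Conjecture~\ref{conj Sp(4n),Sp(2n)} to predict the spectral side of the trace formula, and then the established comparison of \cite{MR3} plus Proposition~\ref{prop model transit} give evidence for the degenerate-Whittaker conjecture~\ref{conj Whittaker Sp(4n)}, not for the present statement. A trace formula aimed at establishing the present statement directly would have to be of Ichino--Ikeda type (pairing $\CP_{G'}$ against $\CP_{G'}$), which is a different comparison from the one with $\CP_{G'}$ against $\CP_\iota$ that both you and the paper write down.
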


Next, we introduce three relative trace formulas. Let $f$ (resp. $f'$) be a Schwartz function on $G(\BA)$ (resp. $\GL_{2n}(\BA)$) and $K_f(x,y)$ (resp. $K_{f'}(x,y)$) be the kernel function. On the $\GL_{2n}$ side, we define
$$I(f')=\int_{\GL_n\times \GL_n(k)\back \GL_n\times \GL_n(\BA)/Z_{\GL_{2n}}(\BA)}\int_{N_{\GL_{2n}}(k)\back N_{\GL_{2n}}(\BA)} K_{f'}(x,y)\xi_{2n}^{-1}(y)\ dy\ dx$$
where $N_{\GL_{2n}}$ is a maximal unipotent subgroup of $\GL_{2n}$ and $\xi_{2n}$ is a generic character of it. Here the $\GL_n\times \GL_n$-period is known as the linear period. It has been studied by Friedberg and Jacquet in \cite{FJ1}. By the result in loc. cit., the discrete part of the spectral expansion of $I(f')$ is of the form
\begin{equation}\label{spectral expansion 3}
I_{disc}(f')``="\sum_{\Pi} \frac{L(1/2,\Pi)}{L(1,\Pi,Ad)}.
\end{equation}
where $\Pi$ runs over all the global discrete generic Arthur packet of $\SO_{2n+1}(\BA)$.

On the other side, we define
$$J(f)=\int_{G'(k)\back G'(\BA)}\int_{N(k)\back N(\BA)}K_f(x,y)\xi(y)^{-1}dydx,$$
$$J'(f)=\int_{G'(k)\back G'(\BA)}\int_{U(k)\back U(\BA)}K_f(x,y)\xi'(y)^{-1}dydx.$$

If we assume Conjecture \ref{conj Whittaker Sp(4n)} and \ref{conj Sp(4n),Sp(2n)} to be true, then the discrete part of the spectral expansion of $J(f)$ should also be of the form \eqref{spectral expansion 3}. As a result, we should expect the following conjecture. 

\begin{conj}\label{conj Sp(2n) RTF}
There is a comparison between the relative trace formulas $I(f')$ and $J(f)$.
\end{conj}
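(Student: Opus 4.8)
The plan is to prove the comparison by the standard three-step strategy for relative trace formulas: unfold both distributions into geometric expansions indexed by the relevant double cosets, match these expansions term by term through a common space of invariants, and then reduce the matching to local identities of orbital integrals, namely a fundamental lemma at almost every place together with smooth transfer at all places.

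For the geometric unfolding, on the $\GL_{2n}$-side one inserts $K_{f'}(x,y)=\sum_{\gamma\in\GL_{2n}(k)}f'(x^{-1}\gamma y)$ and decomposes over $(\GL_n\times\GL_n)(k)\backslash\GL_{2n}(k)/N_{\GL_{2n}}(k)$ against $\xi_{2n}$; since $\GL_{2n}/(\GL_n\times\GL_n)$ is a rank $n$ symmetric space, the regular orbits are parametrized essentially by an $n$-tuple of elements of $F^\times$, and the remaining contributions come from finitely many boundary orbits --- this is the unfolding underlying the work of Friedberg--Jacquet in \cite{FJ1}. On the $\Sp_{4n}$-side one decomposes $J(f)$ over $G'(k)\backslash\Sp_{4n}(k)/N(k)$ against the Richardson character $\xi$; the symmetric space $\Sp_{4n}/(\Sp_{2n}\times\Sp_{2n})$ also has rank $n$, so after combining with the unipotent integration the regular orbits are again parametrized by an $n$-tuple in $F^\times$, with finitely many singular families (including $\pm$-type terms analogous to the $I^{\pm}$, $J^{\pm}$ of \S4 and \S5). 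It will be convenient to pass through $J'(f)$, which differs from $J(f)$ by an inner integration over the unipotent directions absent from $\xi$, and to phrase the comparison in whichever form makes the invariants most transparent. For $n=1$ this specializes to the $\Sp_4$ situation, where, as noted in the text, the comparison is essentially \cite{MR3}.

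The decisive step is the explicit matching of orbits: one must identify the $n$-tuple of invariants on the $\GL_{2n}$-side with the one on the $\Sp_{4n}$-side, compatibly with the Jordan-algebra and spherical-variety structures, so that on the common spectral side both correspond to the Hecke/torus parameters of the generic Arthur packet $\Pi$ of $\SO_{2n+1}$ appearing in \eqref{spectral expansion 3}. This bijection should be read off from the unramified computations of \cite{S1} and the dual-group identifications of \cite{KS1} that already underlie Conjectures \ref{conj Whittaker Sp(4n)} and \ref{conj Sp(4n),Sp(2n)}. Once it is in place, the fundamental lemma --- the matching of orbital integrals of the unit functions $f_0'$ and $f_0$ at almost every place --- is expected to reduce, via a Satake-transform computation, to a Jacquet--Rallis/Gross--Prasad type identity (and for $n=1$ to the statement already recorded in \cite{MR3}), while smooth transfer at the remaining places is proved, in the non-archimedean case, by a density argument showing that the orbital integrals on the $\Sp_{4n}$-side and those on the $\GL_{2n}$-side exhaust the same local space of functions on $(F^\times)^n$ (the higher-rank analogue of the space $C_{OI}^{\infty}$ from \S3), exactly as in \cite{JR} and \cite{MR4}.

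The main obstacle will be two intertwined analytic issues: the archimedean smooth transfer, which requires fine control of the asymptotics of the $\Sp_{4n}$-orbital integrals near the singular locus, and the bookkeeping of the boundary terms --- unlike the rank one situations of \S4 and \S5, here $N$ carries a non-principal degenerate Whittaker character and $G'\backslash\Sp_{4n}$ has positive rank for $n>1$, so several families of singular orbits on each side must be matched. As predicted by the BZSV framework, we expect the resulting comparison to be point-to-point rather than of beyond-endoscopy type, and to use ordinary Schwartz test functions; the details will appear in a future paper.
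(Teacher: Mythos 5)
Your proposal and the paper take genuinely different routes. You propose a direct geometric comparison of $I(f')$ and $J(f)$ from scratch: unfold both kernels, match double-coset orbits and invariants, and prove a fundamental lemma and smooth transfer. The paper instead argues indirectly: it recalls that Mao--Rallis \cite{MR3} \emph{already} established the comparison of $I(f')$ with the distribution $J'(f)$ (for all $n$, not just $n=1$), and then supplies Proposition~\ref{prop model transit} --- the spectral identity $\CP_\iota(\phi)``="\CP_{U,\xi'}(\phi)$ for $\phi$ with a nonvanishing $\Sp_{2n}\times\Sp_{2n}$-period --- as the bridge from $J'(f)$ to $J(f)$. That proposition is proved not by an elementary inner integration but by the Fourier-expansion and root-exchange machinery of Ginzburg--Rallis--Soudry \cite{GRS2,GRS3}, applied to arbitrary automorphic forms in the relevant representations. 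So the paper's route shifts essentially all of the work onto two existing tools, whereas yours would have to redo the unfolding, orbit matching, fundamental lemma and smooth transfer for the new period $\CP_\iota$.

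Two points in your write-up are inaccurate and would derail the plan if taken literally. First, you describe $J'(f)$ as differing from $J(f)$ by ``an inner integration over the unipotent directions absent from $\xi$.'' This is not so: $N$ is the unipotent radical of $P$, $U$ is the full maximal unipotent subgroup, $\xi$ is the Richardson character on $N$ and $\xi'$ is a different (degenerate) character on $U$; the equality of the two periods is a nontrivial descent statement, not a formal inner integration, and it holds only on the spectrum of representations supporting a linear $G'$-period. Second, you attribute \cite{MR3} only to the $n=1$ case; in fact \cite{MR3} proves the comparison between $I(f')$ and $J'(f)$ in general, which is precisely what the paper leans on.

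Finally, note that the paper does not claim a proof of this conjecture: it states that the combination of \cite{MR3} and Proposition~\ref{prop model transit} makes one ``expect'' the comparison to hold, precisely because a spectral identity between $J(f)$ and $J'(f)$ does not by itself upgrade the geometric comparison $I(f')\leftrightarrow J'(f)$ to $I(f')\leftrightarrow J(f)$. Your proposal is a plausible roadmap to an actual proof, but it starts over from the geometric side and leaves the same hard steps open; the paper's approach is more economical evidence, and if you want to prove the conjecture along the paper's lines, the missing ingredient is a \emph{geometric} relation between the $\CP_\iota$- and $\CP_{U,\xi'}$-orbital integrals, not the direct matching you outline.
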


Conversely, if we have such a comparison, we should expect Conjecture \ref{conj Whittaker Sp(4n)} to be true. 

In Theorem 2.1 of \cite{MR3}, they proved a comparison between the relative trace formulas $I(f')$ and $J'(f)$. Together with the following proposition, we should expect the relative trace formula comparison in Conjecture \ref{conj Sp(2n) RTF} to hold and Conjecture \ref{conj Whittaker Sp(4n)} to be true. This is an example of Conjecture \ref{conj Whittaker general} in the case when the nilpotent orbit is not special.

\begin{prop}\label{prop model transit}
Let $\pi$ be an irreducible discrete automorphic representation of $\hat{G}(\BA)$ and let $\nu:\pi\rightarrow L^2(\hat{G}(k)\back \hat{G}(\BA))_{\pi}$ be an embedding. Assume that the period integral $\CP_{G'}$ is non-vanishing on $Im(\nu)$. Then 
$$\CP_\iota(\phi)``="\CP_{U,\xi'}(\phi),\;\phi\in Im(\nu).$$
\end{prop}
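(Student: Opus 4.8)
The plan is to identify $\CP_\iota$ and $\CP_{U,\xi'}$ as two realizations of the ``top'' Fourier coefficient of $\pi$ and to pass from one to the other by the descent method (Fourier expansions along abelian unipotent subgroups together with root exchanges), the point being that the hypothesis $\CP_{G'}\ne 0$ makes $\pi$ small enough that all the error terms produced along the way vanish. First I would record the consequence of the $\Sp_{2n}\times\Sp_{2n}$-period. By the theta correspondence for the relevant dual pair --- equivalently by the ``only if'' direction of Conjecture \ref{conj Sp(4n),Sp(2n)}, which is the accessible half --- non-vanishing of $\CP_{G'}$ on $\pi$ forces the Arthur parameter of $\pi$ to factor through $\hat\iota\colon\Sp_{2n}(\BC)\times\SL_2(\BC)\to\SO_{4n+1}(\BC)$, so $\pi$ is a functorial lift of a tempered packet $\Pi$ of $\SO_{2n+1}(\BA)$ with Arthur $\SL_2$ acting through the two-dimensional representation; equivalently, the Arthur orbit of $\pi$ in $\SO_{4n+1}$ is $(2^{2n},1)$. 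Hence, by the Fourier-coefficient/wave-front bounds of Ginzburg and of Jiang--Liu, the largest nilpotent orbit of $\Sp_{4n}$ carrying a non-zero Fourier coefficient of $\pi$ is dominated by $\CO_\iota=(2n,2n)$, the Barbasch--Vogan dual of $(2^{2n},1)$; in particular, every Fourier coefficient of $\pi$ attached to an orbit not dominated by $(2n,2n)$ is zero. (When one prefers not to invoke unproven functoriality, this last vanishing can instead be obtained by a cuspidal-support argument.)

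Next I would perform the comparison. Recall that $\CP_\iota(\phi)=\int_{N(k)\backslash N(\BA)}\phi(n)\xi(n)\,dn$, where $N=N_P$ is the unipotent radical of the parabolic $P$ of $\Sp_{4n}$ with Levi $(\GL_2)^n$ and $\xi$ is the character attached to the Richardson orbit $\CO_\iota=(2n,2n)$, while $\CP_{U,\xi'}(\phi)$ is, after unfolding, the generic Whittaker coefficient of $\GL_{2n}$ applied to the constant term of $\phi$ along the Siegel parabolic $P_0\supseteq P$. Since $N_P=N_{P_0}\rtimes(N_P\cap L_0)$ with $L_0\simeq\GL_{2n}$ and $N_{P_0}$ abelian, I would expand $\CP_\iota(\phi)$ along $N_{P_0}(k)\backslash N_{P_0}(\BA)$ and carry out a finite chain of root exchanges, exactly parallel to the descent computations of \cite{MR,MR3}: the main term reorganizes into $\CP_{U,\xi'}(\phi)$, and each remaining term, together with the residual character on $N_P\cap L_0$, is a Fourier coefficient of $\phi$ attached to a nilpotent orbit of $\Sp_{4n}$ that is not dominated by $(2n,2n)$, hence vanishes by the first step. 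This yields $\CP_\iota(\phi)=\CP_{U,\xi'}(\phi)$ up to a product of local measure constants, which is the claimed identity.

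The heart of the matter is the second step, and within it the verification that every intermediate Fourier coefficient produced by the expansion and the root exchanges really is attached to an orbit strictly larger than (or incomparable with) $(2n,2n)$, so that the smallness of $\pi$ kills it; it is precisely here that the special shape $\Pi\boxtimes[2]\oplus\mathbf 1$ of the parameter is indispensable, since for a general cuspidal $\pi$ the two functionals $\CP_\iota$ and $\CP_{U,\xi'}$ are not equal. An alternative route that avoids the wave-front input is to argue at the level of relative trace formulas: set up the relative trace formula $J(f)$ pairing $\CP_{G'}$ with $\CP_\iota$ and match its orbital integrals, under a transfer of test functions, with those of the relative trace formula $J'(f)$ of \cite{MR3} pairing $\CP_{G'}$ with $\CP_{U,\xi'}$; identifying the two geometric sides identifies the two spectral expansions, and comparing the $\pi$-component gives $\CP_{G'}(\phi)\,\overline{\CP_\iota(\phi)}=\CP_{G'}(\phi)\,\overline{\CP_{U,\xi'}(\phi)}$ up to the usual ambiguity, whence the proposition on the subspace of $\pi$ where $\CP_{G'}$ does not vanish. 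In either approach the work is local at each place and the global statement is obtained by taking products; since the assertion is only an equality up to Dedekind zeta factors and a finite product over ramified places, no exact evaluation of local integrals is required.
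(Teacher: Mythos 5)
Your overall strategy — exploit the smallness of $\pi$ (forced by the non-vanishing of $\CP_{G'}$) to kill the error terms in a descent computation relating the two unipotent periods — is the right one, and it is indeed the idea behind the paper's proof. But the execution diverges in ways that matter.

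The paper's proof does not pass through the Arthur parameter at all. It cites \cite[Theorem~16]{GRS2} directly: that theorem gives, \emph{unconditionally}, the vanishing of the relevant Fourier coefficients for any automorphic form in an irreducible representation of $\Sp_{4n}(\BA)$ with nontrivial $\Sp_{2n}\times\Sp_{2n}$ linear functional, which is exactly the input needed so that the argument of \cite[\S5, Theorem~1 and Lemma~2]{GRS3} applies verbatim. Your route instead first invokes the ``only if'' direction of Conjecture \ref{conj Sp(4n),Sp(2n)} (or ``theta correspondence'', or ``Arthur classification'') to pin down the Arthur orbit $(2^{2n},1)$, and then wave-front bounds of Ginzburg/Jiang--Liu to bound the Fourier support by $(2n,2n)$. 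This both introduces conjectural/unproven dependencies where none are needed, and is the wrong level of generality: GRS2 Theorem 16 is precisely the vanishing statement that replaces your wave-front heuristic, and it comes for free from the period hypothesis. Your parenthetical fallback (``cuspidal-support argument'') is too vague to carry the load.

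The second step of your proposal — the Fourier expansion along $N_{P_0}$ and the root exchanges — is exactly the content of \cite[\S5, Theorem~1 and Lemma~2]{GRS3}, but you present it as something to be carried out rather than something already done. The heart of that argument is a chain of Fourier-transform identities $a_i(f)=a_{i-1}(f')$ (after \cite[(5.31)]{GRS3}) which show $a_1(f)``="a_{2n}(f)=f(1)$, \emph{not} a statement that intermediate terms are nilpotent orbital Fourier coefficients attached to orbits $>(2n,2n)$; your phrasing of the vanishing mechanism is not quite what happens. Finally, the ``alternative RTF route'' is not viable as stated: comparing $J(f)$ and $J'(f)$ geometrically would require establishing a fundamental lemma and smooth transfer for these two RTFs, which is not available and would be at least as hard as the proposition itself — indeed the proposition is invoked in the paper \emph{in order to} relate the two RTFs, so taking that comparison as input is circular.
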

\begin{proof}
	This follows essentially from \cite[Theorem~16]{GRS2} and \cite[\S5 Theorem 1 and Lemma 2]{GRS3}. The result of \cite[\S5 Theorem 1]{GRS3} is an equation for $\CP_\iota(\CE)$ (the equation (5.18) in \cite{GRS3}), where $\CE$ is a residue of an Eisenstein series. However the proof applies to any automorphic form $\phi$ in an irreducible representation with nontrivial $\Sp_{2n}\times\Sp_{2n}$ linear form, as by \cite[Theorem~16]{GRS2} such a $\phi$ satisfies the key vanishing result \cite[(5.2)]{GRS3}. Thus following their argument we get (\cite[(5.16)]{GRS3})
	$$\CP_\iota(\phi)=\int_{X_0(\A)}\CP_{U,\xi'}(\phi(\cdot \bar{\ell}(x)\nu_0))\ dx.$$
	Here $\nu_0$ is a Weyl element and $\bar{\ell}(X_0)$ is a subgroup of $\Sp_{4n}$ defined in \cite[(4.12)]{GRS3}. In particular
	$\CP_\iota(\phi)``="\int_{X_0(\A)}\CP_{U,\xi'}(\phi(\cdot \bar{\ell}(x))\ dx$.
	
In \cite[Lemma 2]{GRS3} they proved the non-vanishing of the above integral. To do so they defined a sequence of subgroups $\{e\}=K^{2n}\subset K^{2n-1}\subset\cdots\subset K^1=\bar{\ell}(X_0)$ in $\Sp_{4n}$, and integrals $a_i(f)=\int_{K^i(\A)}f(x)\ dx$. Thus $\CP_\iota(\phi)``="a_1(f)$ when $f(g)=\CP_{U,\xi'}(\phi(\cdot g))$. Next we can define a sequence of abelian subgroups $R_i\subset \Sp_{4n}$ ($1\leq i\leq 2n$) as in \cite[(5.31)]{GRS3}. The key statement in the proof of \cite[Lemma 2]{GRS3} is that if one expands $f$ as
	$$f=\sum_\alpha \int_{R_i(\A)}\varphi_\alpha(r) f_\alpha(\cdot r)\ dr,$$
	then $a_i(f)=a_{i-1}(f')$ where
	$$f'=\sum_\alpha \int_{R_i(\A)}\hat{\varphi}_\alpha(r) f_\alpha(\cdot r)\ dr,$$
	here $\hat{\varphi}_\alpha$ is the Fourier transform of $\varphi_\alpha$. In particular we have $a_i(f)``="a_{i-1}(f)$, and $f(1)=a_{2n}(f)``="a_{1}(f)$. Thus $\CP_\iota(\phi)``="\CP_{U,\xi'}(\phi)$.
	\end{proof}

\end{document}